\theoremstyle{plain}
    \newtheorem{thm}{Theorem}[section]
     \newtheorem{defi}[thm]{Definition}
    \newtheorem{coro}[thm]{Corollary}
    \newtheorem{lem}[thm]{Lemma}
    \newtheorem{pro}[thm]{Proposition}
    \newtheorem{question}[thm]{Question}
    \newtheorem{remark}[thm]{Remark}
\newcommand{\Rmnum}[1]{\expandafter\@slowromancap\romannumeral #1@}
\begin{document}

\bibliographystyle{plain}
\title[Int-amplified endomorphisms of compact k\"ahler  spaces]
{Int-amplified endomorphisms of compact k\"ahler spaces
}

\author{Guolei Zhong}
\address
{
%\textsc{Department of Mathematics} \endgraf
\textsc{National University of Singapore,
Singapore 119076, Republic of Singapore
}}
\address{
\textsc{Current address: Center for Complex Geometry, Institute for Basic Science, 55, Expo-ro, Yuseong-gu, Daejeon, Republic of Korea, 34126
}}
\email{zhongguolei@u.nus.edu, guolei@ibs.re.kr}

\begin{abstract}
Let $X$ be a normal compact K\"ahler space of dimension $n$. A  surjective endomorphism $f$ of such $X$ is int-amplified if $f^*\xi-\xi=\eta$ for some K\"ahler classes $\xi$ and $\eta$. First, we show that this definition generalizes the notion in the projective setting. Second, we prove that for the cases of $X$ being smooth, a surface or a threefold with mild singularities,  if $X$ admits an int-amplified endomorphism with pseudo-effective canonical divisor, then it is a $Q$-torus. Finally, we consider a  normal compact K\"ahler threefold $Y$ with only terminal singularities and show that, replacing $f$ by a positive power, we can run the minimal model program (MMP) $f$-equivariantly for such $Y$ and reach either a $Q$-torus or a Fano (projective) variety of Picard number one.
\end{abstract}
\subjclass[2010]{
14E30,   %Minimal model program (Mori theory, extremal rays)
%14H30, % Coverings, fundamental group
%32H50, %iteration problem,
%11G10, %Abelian varieties of dimension > 1
%20K30 , %Automorphisms, homomorphisms, endomorphisms, etc.
08A35,  %Automorphisms, endomorphisms
%14M25.  %Toric varieties, Newton polyhedra
%14J50, %Automorphisms of surfaces and higher-dimensional varieties
%32M05. %Complex Lie groups, automorphism groups acting on complex spaces
11G10,  %Abelian varieties of dimension >1
%37B40 %Topological entropy
}

\keywords{ compact K\"ahler space, int-amplified endomorphism,  minimal model program}

\maketitle

\tableofcontents

\section{Introduction}
We work over the field $\mathbb{C}$ of complex numbers. By the fundamental work of S. Meng and D.-Q. Zhang (cf.~\cite{meng2017building} and \cite{meng2018building}) during the past several years, we have known the building blocks and characteristic properties of polarized and int-amplified endomorphisms of normal projective varieties. Also, Meng and Zhang show that we can run the minimal model program equivariantly for mildly singular normal projective varieties and finally reach either a $Q$-abelian variety or a Fano variety of Picard number one. As an application, one can use the results to study the totally invariant divisors of polarized or int-amplified endomorphisms (cf.~\cite[Theorem 1.3]{zhang2014invariant} and \cite[Theorem 1.1]{zhong2020invariant}). 

Now, we pose a natural question. 
Does there exist such nice building blocks for compact K\"ahler spaces admitting non-isomorphic surjective endomorphisms?
This is open since the minimal model program for higher-dimensional compact K\"ahler spaces is unknown.  

In this article, we consider an arbitrary normal compact K\"ahler space $X$. A  surjective endomorphism $f$ of $X$ is said to be \textit{int-amplified}, if $f^*\xi-\xi=\eta$ for some K\"ahler classes $\xi$ and $\eta$. We first prove that when $X$ is projective, the generalized definition coincides with the previous one (cf.~\cite{meng2017building}). Then, we follow the idea of \cite{meng2017building} to study the normal compact K\"ahler space admitting an int-amplified endomorphism in terms of the K\"ahler cone and canonical divisor. We show that most of these properties are preserved when the objects are extended to the analytic setting. Moreover, we study the periodic points or totally invariant analytic subvarieties of some special complex spaces such as $Q$-tori, i.e., quasi-\'etale finite quotients of complex tori. Finally, as a consequence of the existence of the minimal model program (MMP) for compact K\"ahler threefolds (cf.~\cite{Horing2015mori} and \cite{horing2016minimal}), we prove that every int-amplified endomorphism of a compact K\"ahler threefold with at worst terminal singularities has an equivariant descending, when running the MMP.

Our work can be divided into two parts. First, we recall notation and terminology for the singular analytic space such as differential forms, currents and invariant cones. We spend some time showing that all of these objects possess nice properties parallel to normal projective cases. Second, we apply these common properties in the preliminary to generalize results in \cite{meng2017building}. Here, we list them below for the convenience of readers.

Theorem \ref{thm1.1} generalizes \cite[Theorem 1.1]{meng2017building}. The proof of the equivalent conditions  is the same as \cite{meng2017building} while the coincidence of these two definitions of int-amplified endomorphisms when $X$ is projective is new (cf.~Remark \ref{reminitial} and the proof of Theorem \ref{thm1.1} in section \ref{section3}).

\begin{thm}\label{thm1.1}
Suppose that $f:X\rightarrow X$ is a surjective endomorphism of a normal compact K\"ahler space $X$ with at worst rational singularities.  Let $\varphi:=f^*|_{\textup{H}_{\textup{BC}}^{1,1}(X)}$, where $\textup{H}_{\textup{BC}}^{1,1}(X)$ denotes the Bott-Chern cohomology space. Then the following are equivalent.
\begin{enumerate}
\item[(1)] The endomorphism $f$ is int-amplified.
\item[(2)] All the eigenvalues of $\varphi$ are of modulus greater than $1$.
\item[(3)] There exists some big $(1,1)$-class $[\theta]$ such that $f^*[\theta]-[\theta]$ is big, i.e., it can be represented by a K\"ahler current $T$.
\item[(4)] If $C$ is a non-empty and non-zero $\varphi$-invariant convex cone in $\textup{H}_{\textup{BC}}^{1,1}(X)$, then
$\emptyset\neq (\varphi-\text{id})^{-1}(C)\subseteq C$.
\end{enumerate}
Furthermore, if $X$ is projective, then the two definitions of int-amplified endomorphisms coincide (cf.~Definition \ref{defipolarized} in the K\"ahler setting and \cite{meng2017building} in the projective setting).
\end{thm}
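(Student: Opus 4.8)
The plan is to prove the two implications separately. Throughout, recall the basic dictionary between the algebraic and analytic worlds on a normal projective variety $X$: there is a natural linear map $\iota\colon \textup{N}^1(X)_{\mathbb R}\to \textup{H}^{1,1}_{\textup{BC}}(X)$, namely $D\mapsto c_1(\mathcal O_X(D))$, which is $f^*$-equivariant (since $f$ is dominant, $f^*\mathcal O_X(D)=\mathcal O_X(f^*D)$, and $c_1$ commutes with pullback in Bott--Chern cohomology), and which sends the ample cone into the K\"ahler cone. For the latter, note that a large multiple $mH$ of an ample $\mathbb Q$-Cartier divisor $H$ is very ample, the pullback of a Fubini--Study form along the associated embedding represents $c_1(\mathcal O_X(mH))=m\,\iota(H)$, and the K\"ahler cone is stable under positive scaling; hence $\iota(H)$ is K\"ahler. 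Consequently, if $f$ is int-amplified in the sense of \cite[Definition 2.1]{meng2019building}, say $f^*H-H$ is ample with $H$ ample $\mathbb Q$-Cartier, then $\xi:=\iota(H)$ and $\eta:=\iota(f^*H-H)=f^*\xi-\xi$ are K\"ahler classes, so $f$ is int-amplified in our sense.

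For the converse, assume $f^*\xi-\xi=\eta$ for K\"ahler classes $\xi,\eta\in \textup{H}^{1,1}_{\textup{BC}}(X)$. By the equivalence (1)$\Leftrightarrow$(2) of Theorem \ref{thm1.1}, every eigenvalue of $\varphi=f^*|_{\textup{H}^{1,1}_{\textup{BC}}(X)}$ has modulus greater than $1$. The subspace $\iota(\textup{N}^1(X)_{\mathbb R})$ is $\varphi$-invariant (because $f^*$ preserves $\textup{N}^1(X)_{\mathbb R}$ and commutes with $\iota$), so the eigenvalues of $\varphi|_{\iota(\textup{N}^1(X)_{\mathbb R})}$ all have modulus greater than $1$; assuming $\iota$ is injective, $f^*|_{\textup{N}^1(X)_{\mathbb R}}$ is conjugate to $\varphi|_{\iota(\textup{N}^1(X)_{\mathbb R})}$ via $\iota$, so its eigenvalues also all have modulus greater than $1$. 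By \cite[Theorem 1.1]{meng2017building} (the projective counterpart of Theorem \ref{thm1.1}, direction (2)$\Rightarrow$(1)), $f$ is then int-amplified in the algebraic sense, which completes the equivalence.

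It remains to verify that $\iota$ is injective. Let $D$ be a Cartier divisor with $\iota(D)=0$. For any irreducible curve $C\subseteq X$ with normalization $\nu\colon\widetilde C\to X$, the class $\nu^*c_1(\mathcal O_X(D))$ vanishes in $\textup{H}^2(\widetilde C,\mathbb C)$, so $D\cdot C=\deg\nu^*\mathcal O_X(D)=0$; as this holds for every $C$, we conclude $D\equiv 0$, i.e.\ $\iota$ is injective. The one point that genuinely needs care is the reconciliation of the two positivity and cohomology frameworks on a possibly \emph{singular} projective variety --- that $\textup{H}^{1,1}_{\textup{BC}}(X)$, the comparison map $\iota$, the pullback $f^*$, and the implication ``ample $\Rightarrow$ K\"ahler'' all behave as in the smooth case --- but this has been arranged in the preliminary sections, after which the argument above is purely linear algebra together with the cited projective results. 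I expect this comparison of frameworks, rather than any single computation, to be the main obstacle.
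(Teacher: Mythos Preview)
Your proposal addresses only the ``Furthermore'' clause --- the coincidence of the two definitions when $X$ is projective --- and does not prove the equivalence of (1)--(4). You invoke ``the equivalence (1)$\Leftrightarrow$(2) of Theorem~\ref{thm1.1}'' in the middle of your argument, which is circular as written. The paper dispatches (1)--(4) in one line: (4)$\Rightarrow$(1)$\Rightarrow$(3) is immediate by taking $C=\mathcal K(X)$; (3)$\Rightarrow$(2) is \cite[Lemma~3.1]{meng2017building} applied with $V=\textup{H}^{1,1}_{\textup{BC}}(X)$ and $C=\mathcal E(X)$ (the pseudo-effective cone, which spans $V$ and has the big cone as its interior); and (2)$\Rightarrow$(4) is \cite[Proposition~3.2]{meng2017building}. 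You should insert this, or an equivalent argument, before using (1)$\Rightarrow$(2).

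For the ``Furthermore'' clause itself, your argument is essentially identical to the paper's. The forward direction (algebraic $\Rightarrow$ K\"ahler) is exactly Remark~\ref{reminitial}: ample divisors have K\"ahler Chern classes via pullback of Fubini--Study. For the converse, both you and the paper run the same route: use (1)$\Rightarrow$(2) on $\textup{H}^{1,1}_{\textup{BC}}(X)$, observe that the N\'eron--Severi space sits inside as an $f^*$-invariant subspace so the eigenvalue bound transfers, and then invoke \cite[Theorem~1.1]{meng2017building} to conclude. The paper simply asserts ``the N\'eron--Severi space is a subspace of the Bott--Chern cohomology space''; your explicit check that $\iota$ is injective via intersecting with curves is fine but not needed beyond what the preliminaries already provide. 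Your closing worry about reconciling singular frameworks is overstated --- the paper treats this as routine.
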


As an application of Theorem \ref{thm1.1}, we are able to show the following result,  which asserts that the composition of a sufficient number of iterations of int-amplified endomorphisms is still int-amplified. Readers may refer to \cite[Theorem 1.4]{meng2017building} for the projective version.

\begin{thm}\label{thm1.2}
Let $f$ and $g$ be two surjective endomorphisms of a normal compact K\"ahler space $X$ with at worst rational singularities. Suppose that $f$ is int-amplified. Then both $f^i\circ g$ and $g\circ f^i$ are int-amplified for sufficiently large $i\gg 1$.
\end{thm}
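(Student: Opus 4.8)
The plan is to work entirely on the finite-dimensional real vector space $V := \textup{H}_{\textup{BC}}^{1,1}(X)_{\mathbb R}$ and exploit the characterization in Theorem \ref{thm1.1}(2). Write $\varphi := f^*|_V$ and $\psi := g^*|_V$; these are $\mathbb R$-linear automorphisms of $V$ (surjectivity of $f$ and $g$ guarantees invertibility, since pullback on Bott--Chern cohomology is an isomorphism for a surjective endomorphism of a normal compact K\"ahler space). Because $f$ is int-amplified, Theorem \ref{thm1.1} tells us that every eigenvalue of $\varphi$ has modulus strictly greater than $1$. The pullback of the composite $f^i \circ g$ on $V$ is $\psi \circ \varphi^i$ (contravariance), and that of $g \circ f^i$ is $\varphi^i \circ \psi$; these two operators are conjugate via $\psi$, hence have the same spectrum, so it suffices to treat $\varphi^i \circ \psi$ and show all its eigenvalues have modulus $>1$ for $i \gg 1$. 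Once that is done, applying Theorem \ref{thm1.1}(2)$\Rightarrow$(1) to $f^i\circ g$ and $g\circ f^i$ finishes the proof. I should also note in passing that $f^i \circ g$ and $g \circ f^i$ are genuine surjective endomorphisms of $X$, so Theorem \ref{thm1.1} applies to them.

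The key estimate is a spectral-radius statement: if $A$ is an invertible operator on a finite-dimensional space all of whose eigenvalues have modulus $>1$, and $B$ is any fixed invertible operator, then for $i$ large every eigenvalue of $A^i B$ has modulus $>1$. I would prove this by choosing a norm on $V$ adapted to $A$. Let $\rho := \min\{|\lambda| : \lambda \in \textup{Spec}(A)\} > 1$ and fix $r$ with $1 < r < \rho$. Since the spectral radius of $A^{-1}$ equals $1/\rho < 1/r$, Gelfand's formula gives an integer $m$ and a constant so that, after passing to the operator norm induced by a suitable basis (or directly by the submultiplicative-norm trick), one has $\|A^{-m} x\| \le r^{-m}\|x\|$ for all $x$ in some equivalent norm $\|\cdot\|$ on $V$; more cleanly, there is a norm $\|\cdot\|_A$ on $V$ with $\|A^{-1}\|_A < 1/r$, equivalently $\|A v\|_A \ge r\,\|v\|_A$ for all $v$. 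Then for any nonzero $v$, $\|A^i B v\|_A \ge r^i \|B v\|_A \ge r^i (\min_{\|w\|_A=1}\|Bw\|_A)\,\|v\|_A$, and since $B$ is invertible the constant $c := \min_{\|w\|_A = 1}\|Bw\|_A$ is strictly positive. Hence for $i$ large enough that $c\, r^i > 1$, the operator $A^i B$ satisfies $\|A^i B v\|_A > \|v\|_A$ for all $v \ne 0$, which forces every eigenvalue of $A^i B$ to have modulus $>1$ (an eigenvector with eigenvalue $\mu$ would give $|\mu| = \|A^iBv\|_A/\|v\|_A > 1$). Applying this with $A = \varphi$, $B = \psi$ gives the claim for $g \circ f^i$, and the conjugacy observation above then covers $f^i \circ g$ simultaneously.

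The main obstacle, such as it is, is not analytic but bookkeeping: one must be careful that $f^*$ really acts as an invertible linear operator on the finite-dimensional space $\textup{H}_{\textup{BC}}^{1,1}(X)$ for a surjective (not necessarily finite, though in fact such $f$ is finite) endomorphism of a \emph{singular} normal compact K\"ahler space, and that pullback is functorial so that $(f^i\circ g)^* = g^* \circ (f^*)^i$ on this space. These facts are part of the cohomological setup the paper develops in its preliminary section, so here I would simply cite them. A secondary point is that the bound on $i$ produced above depends on $g$ (through the constant $c$ and through $\rho$), which is exactly what the statement ``for sufficiently large $i \gg 1$'' allows, so no uniformity is needed. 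Everything else is the elementary linear-algebra lemma on spectral radii, which I would either state and prove as a one-line lemma or simply invoke; no input beyond Theorem \ref{thm1.1} and basic finite-dimensional operator theory is required.
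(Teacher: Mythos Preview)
Your proposal is correct and follows essentially the same approach as the paper: both arguments work on $V=\textup{H}_{\textup{BC}}^{1,1}(X)$, invoke the eigenvalue characterization of Theorem~\ref{thm1.1}, and use a norm estimate of the form $\|\phi_g\phi_f^i(v)\|>\|v\|$ for $i\gg1$ derived from the fact that the spectral radius of $\phi_f^{-1}$ is $<1$. The minor differences are that you build an adapted norm with $\|\phi_f^{-1}\|<1/r$ directly (the paper uses Gelfand's formula on a fixed norm), you deduce the eigenvalue bound straight from $\|(\phi_h)^{-1}\|<1$ rather than invoking Perron--Frobenius on the pseudo-effective cone as the paper does, and you observe the conjugacy $\psi\varphi^i=\psi(\varphi^i\psi)\psi^{-1}$ to treat both orders at once; these are streamlinings of the same argument rather than a different route.
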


We refer the readers to \cite[Sections 4 and 6]{ueno2006classification} for the definitions of Cartier divisor, linear system and Kodaira dimension for a complex analytic variety. Also, the positivity of differential forms and currents will be discussed in the preliminary part. 

The following theorem is important for the equivariant minimal model program of compact K\"ahler threefolds (cf.~\cite[Theorem 2.5]{meng2017building} for the normal projective case).

\begin{thm}\label{thm1.3}
Let $X$ be a normal compact K\"ahler space with at worst rational singularities. 
Suppose that $X$ admits an int-amplified endomorphism $f$ and the canonical divisor $K_X$ is $\mathbb{Q}$-Cartier.
Then, $-K_X$ is pseudo-effective. In particular, the Kodaira dimension $\kappa(X)\le 0$.
\end{thm}

As an application of Theorem \ref{thm1.3} and the main theorem in \cite{graf2019finite}, we get Theorem \ref{thmtorus} below so that when  running the MMP for compact K\"ahler threefolds, we may reduce the case of canonical divisor $K_X$ being pseudo-effective to the $Q$-torus case.

\begin{thm}\label{thmtorus}
Let $X$ be one of the following cases: (i) a compact K\"ahler manifold of any dimension, (ii) a normal compact K\"ahler threefold with at worst canonical singularities, or (iii) a normal $\mathbb{Q}$-factorial compact K\"ahler surface. Suppose that $X$ admits an  int-amplified endomorphism $f:X\rightarrow X$ and the canonical divisor $K_X$ is pseudo-effective. Then $X$ is a $Q$-torus, i.e., a quasi-\'etale finite quotient of a complex torus. 
\end{thm}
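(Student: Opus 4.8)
The plan is to use Theorem~\ref{thm1.3} together with the hypothesis to force the canonical class to vanish numerically, then to upgrade $f$ to a quasi-\'etale endomorphism of degree $>1$, and finally to feed this into a Beauville--Bogomolov-type decomposition. Concretely: by Theorem~\ref{thm1.3}, $-K_X$ is represented by a closed positive $(1,1)$-current $T_-$, and by hypothesis $K_X$ is represented by a closed positive $(1,1)$-current $T_+$. Fixing a K\"ahler class $\{\omega\}$ and using that $\omega^{n-1}$ is strictly positive, the intersection number $\{K_X\}\cdot\{\omega\}^{n-1}$ is both $\ge 0$ (pair $T_+$ with $\omega^{n-1}$) and $\le 0$ (pair $T_-$), hence zero. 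Then $T_++T_-$ is a closed positive $(1,1)$-current in the zero Bott--Chern class, so its mass $\int_X(T_++T_-)\wedge\omega^{n-1}=\{K_X\}\cdot\{\omega\}^{n-1}+\{-K_X\}\cdot\{\omega\}^{n-1}$ equals $0$; as its $\omega$-trace is a non-negative measure of total mass $0$, the current $T_++T_-$ vanishes, whence $T_+\ge 0$ and $T_+=-T_-\le 0$ give $T_+=0$. Thus $\{K_X\}=0$ in $\textup{H}_{\textup{BC}}^{1,1}(X)$, i.e. $K_X\equiv 0$. (The current-theoretic statements used here are those set up in the preliminary section; on the singular spaces of the theorem, intersection numbers and trace measures are understood in that framework.)

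Next I would upgrade $f$. From the ramification formula $K_X=f^*K_X+R_f$ and $K_X\equiv 0$ we get $R_f\equiv 0$; since $R_f$ is an effective $\mathbb Q$-divisor and every prime divisor meets $\omega^{n-1}$ strictly positively, $R_f=0$, so $f$ is \'etale in codimension one, i.e. quasi-\'etale (and \'etale when $X$ is smooth, by purity of the branch locus). Also $\deg f>1$: picking a K\"ahler class $\xi$ with $\eta:=f^*\xi-\xi$ K\"ahler (possible as $f$ is int-amplified) gives $\deg f\cdot\int_X\xi^n=\int_X(\xi+\eta)^n>\int_X\xi^n$. Since the conclusion concerns $X$ only, we may freely replace $f$ by a positive power, so that $f$ lifts to any finite quasi-\'etale cover we consider and respects any finite product decomposition of that cover.

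Finally, as $X$ has mild singularities and $K_X\equiv 0$, a Beauville--Bogomolov-type decomposition applies: some finite quasi-\'etale cover $\pi\colon\widetilde X\to X$ is isomorphic to $T\times\prod_i Y_i\times\prod_j Z_j$ with $T$ a complex torus, the $Y_i$ of Calabi--Yau type and the $Z_j$ of irreducible holomorphic symplectic type (with klt singularities), each non-torus factor having trivial algebraic fundamental group of its smooth locus. This is classical when $X$ is smooth; for surfaces one first notes that a non-invertible quasi-\'etale endomorphism forces $X$ to have quotient (hence klt) singularities and then uses the Enriques--Kodaira classification; for threefolds it is the main theorem of \cite{graf2019finite}. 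Lifting (a power of) $f$ to $\widetilde f$ on $\widetilde X$, which is again compact K\"ahler and normal, $\widetilde f$ is quasi-\'etale (being a lift of quasi-\'etale maps) and int-amplified (since $\pi^*$ sends K\"ahler classes to K\"ahler classes). After a further power $\widetilde f$ respects the product and induces a quasi-\'etale endomorphism on each factor; on a factor $Y_i$ or $Z_j$ its iterates are quasi-\'etale covers of unbounded degree of a space with finite $\widehat\pi_1$ of the smooth locus, so that induced map has degree $1$ and is an automorphism. But an automorphism of a Calabi--Yau or irreducible holomorphic symplectic klt space cannot act on $\textup{H}_{\textup{BC}}^{1,1}$ with all eigenvalues of modulus $>1$: it preserves the integral cohomology lattice of $H^2$ and fixes the line of holomorphic $2$-forms, if nonzero, up to a scalar of modulus $1$ (topologically on a $K3$ factor, via the Beauville--Bogomolov form on a higher symplectic factor), so the product of its eigenvalues on $\textup{H}_{\textup{BC}}^{1,1}$ has modulus $1$; yet Theorem~\ref{thm1.1}(2) for $\widetilde f$ forces all eigenvalues of $\widetilde f^*$ on the (nonzero) $\widetilde f^*$-invariant subspace pulled back from that factor to have modulus $>1$. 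Hence there are no non-torus factors, $\widetilde X\cong T$, and $X$ (passing to the Galois closure of $\pi$ if needed) is a finite quasi-\'etale quotient of a complex torus, i.e. a $Q$-torus.

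I expect the first step to be the main obstacle: making sense of, and manipulating, closed positive currents, their Bott--Chern classes, intersection numbers and trace measures on the normal K\"ahler spaces of the statement, which is exactly why the preliminary section is needed. A secondary difficulty is transporting int-amplifiedness through the quasi-\'etale cover and through the decomposition, and uniformly ruling out the Calabi--Yau and symplectic factors in all three cases; the surface and threefold cases in particular require, respectively, the quotient-singularity input and the full strength of \cite{graf2019finite}.
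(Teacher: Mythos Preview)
Your first two steps---deducing $K_X\equiv 0$ and that $f$ is quasi-\'etale of degree $>1$---match the paper's. From there the approaches diverge. The paper does \emph{not} invoke a Beauville--Bogomolov decomposition; instead it shows directly that the (orbifold) second Chern class vanishes against a K\"ahler class. Since $f$ is quasi-\'etale, \cite[Proposition~5.6]{graf2019finite} gives $\tilde c_2(X)\cdot (f^*[\omega])^{n-2}=(\deg f)\,\tilde c_2(X)\cdot[\omega]^{n-2}$, hence $\tilde c_2(X)\cdot[\omega]^{n-2}=\tilde c_2(X)\cdot\frac{(f^i)^*[\omega]^{n-2}}{(\deg f)^i}$, and Lemma~\ref{degreelem} combined with the sandwich argument of Lemma~\ref{lemopenclosed} forces this limit to zero. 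Then Yau's theorem (manifolds), \cite[Theorem~1.1]{graf2019finite} (threefolds), and \cite[Proposition~7.2]{graf2019finite} (klt surfaces) characterize $Q$-tori precisely by $c_1=0$ together with $\tilde c_2\cdot\omega^{n-2}=0$.

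Your route is a legitimate alternative in the smooth case, but the singular cases carry a genuine gap. The main theorem of \cite{graf2019finite} is \emph{not} a Beauville--Bogomolov decomposition for K\"ahler threefolds with $K_X\equiv 0$; it is the characterization of $Q$-tori via the vanishing of $c_1$ \emph{and} $\tilde c_2\cdot\omega$, so it takes the second-Chern-class condition as input rather than producing a splitting from $c_1=0$ alone. A singular K\"ahler BB decomposition of the kind you need was not available, so your threefold argument cannot close without first establishing $\tilde c_2\cdot\omega=0$---which is exactly the dynamical step the paper supplies and you bypass. For $\mathbb Q$-factorial surfaces, your claim that a non-invertible quasi-\'etale endomorphism forces quotient singularities is unjustified; the paper instead splits into non-uniruled and uniruled cases, using divisorial Zariski decomposition to obtain canonical singularities in the former (Lemma~\ref{torus4}) and abundance plus the index-one cover in the latter (Proposition~\ref{torus5}).
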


We generalize \cite[Theorem 1.10]{meng2017building} to the following equivariant MMP. We refer to \cite[Section 6]{meng2018building} for the technical details.  The author will highlight and compare the differences between projective varieties and compact K\"ahler spaces in Section \ref{section8}. 

\begin{thm}\label{thmmmpthreefold}
Let $f:X\rightarrow X$ be an int-amplified endomorphism of a normal $\mathbb{Q}$-factorial compact K\"ahler threefold with at worst terminal singularities. Then, replacing $f$ by a positive power, there exist a $Q$-torus $Y$, a morphism $X\rightarrow Y$ and an $f$-equivariant minimal model program of $X$ over $Y$:
$X=X_1\dashrightarrow \cdots\dashrightarrow X_i\dashrightarrow\cdots\dashrightarrow X_r=Y$
(i.e., $f=f_1$ descends to $f_i$ on each $X_i$), with every $X_i\dashrightarrow X_{i+1}$ a contraction of a divisorial ray, a flip or a Mori fibre contraction, of a $K_{X_i}$-negative extremal ray, such that:
\begin{enumerate}
\item If $K_X$ is pseudo-effective, then $X=Y$ and it is a $Q$-torus.
\item If $K_X$ is not pseudo-effective, then for each $i$, $f_i$ is int-amplified and $X_i\rightarrow Y$ is an equi-dimensional (holomorphic) morphism with every fibre irreducible and rationally connected. Also, $X_{r-1}\rightarrow X_r=Y$ is a Mori fibre contraction.
\end{enumerate}
\end{thm}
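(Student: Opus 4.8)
The plan is to reduce everything to Theorems~\ref{thm1.1}--\ref{thmtorus} together with the minimal model program for $\mathbb{Q}$-factorial terminal compact K\"ahler threefolds of H\"oring--Peternell (\cite{Horing2015mori, horing2016minimal}), and then transcribe the equivariant bookkeeping of \cite[Section~6]{meng2018building} and Section~\ref{section8} into the analytic setting. \textbf{Step 1 (the pseudo-effective case).} If $K_X$ is pseudo-effective in the sense of currents, then $X$ is a $\mathbb{Q}$-factorial compact K\"ahler threefold with at worst terminal, hence canonical, singularities, so Theorem~\ref{thmtorus} applies and gives that $X$ is a $Q$-torus; taking $Y=X$ and $r=1$ settles conclusion~(1). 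From now on assume $K_X$ is not pseudo-effective; by Theorem~\ref{thm1.3}, $-K_X$ is still a positive $(1,1)$-current, so $\kappa(X)\le 0$, and we aim for conclusion~(2).

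\textbf{Step 2 (making one step of the MMP equivariant).} By \cite{Horing2015mori, horing2016minimal} there is a cone theorem for $\overline{\mathrm{NA}}(X)$ (the K\"ahler analogue of the Mori cone), $K_X$-negative extremal rays admit divisorial and Mori fibre contractions, their flips exist, and all of this stays within the category of $\mathbb{Q}$-factorial terminal compact K\"ahler threefolds (the base of a Mori fibre contraction being a normal $\mathbb{Q}$-factorial compact K\"ahler variety of strictly smaller dimension with klt singularities). The point to establish is the K\"ahler analogue of the relevant lemma of \cite{meng2018building}: after replacing $f$ by a positive power, there is a $K_X$-negative extremal ray $R$ with $f_*R=R$. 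Here $f_*$ is a linear automorphism of $\mathrm{N}_1(X)$ preserving $\overline{\mathrm{NA}}(X)$, and, by the ramification formula $K_X=f^*K_X+R_f$ with $R_f\ge 0$, the projection formula, and Theorem~\ref{thm1.1}, it carries $K_X$-negative extremal rays to $K_X$-negative extremal rays; since by Theorem~\ref{thm1.1} the $f_*$-orbit of such a ray cannot accumulate at the boundary of the $K_X$-negative part of the cone, a power of $f_*$ fixes one of them. The associated contraction or flip $X=X_1\dashrightarrow X_2$ is then $f$-equivariant, and, using Theorems~\ref{thm1.1} and~\ref{thm1.2}, the descended $f_2\colon X_2\to X_2$ is again int-amplified; in particular, if the step is a Mori fibre contraction then the base carries an int-amplified endomorphism.

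\textbf{Step 3 (iteration and identification of $Y$).} Iterating Step~2 and invoking termination of the K\"ahler threefold MMP, we obtain an $f$-equivariant sequence $X=X_1\dashrightarrow\cdots$ of divisorial contractions, flips and Mori fibre contractions; at a Mori fibre step the dimension drops, and we continue $f$-equivariantly with the MMP of the base (in dimension $\le 2$, using \cite{fujino2019minimal} for surfaces and the classification of curves admitting a non-invertible endomorphism). Since $K_X$ is not pseudo-effective and pseudo-effectivity of the canonical class is a birational invariant for canonical varieties, no model appearing in a fixed dimension is a minimal model, so each run terminates with a Mori fibre contraction; we stop once the dimension can drop no further, or the canonical class of the current model becomes pseudo-effective. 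This produces $X_r=Y$ with $X_{r-1}\to X_r=Y$ a Mori fibre contraction: either $\dim Y=0$, in which case $Y$ is a $0$-dimensional $Q$-torus and $X_{r-1}$ is a Fano variety of Picard number one (automatically projective), or $K_Y$ is pseudo-effective, in which case Theorem~\ref{thmtorus} (for $Y$ a surface or, via the manifold case, an elliptic curve) shows $Y$ is a $Q$-torus; and one checks, as in \cite[Section~6]{meng2018building}, that each $X_i\to Y$ is a morphism.

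\textbf{Step 4 (fibre geometry and the main obstacle).} When $K_X$ is not pseudo-effective one finally shows that each $X_i\to Y$ is equidimensional with irreducible, rationally connected fibres: rational connectedness of the general fibre follows since $X_i\to Y$ is built from $K$-negative extremal contractions, while equidimensionality and irreducibility of every fibre follow from the int-amplified hypothesis, a non-equidimensional or reducible locus downstairs being totally invariant under a power of the descended endomorphism, contradicting the structure of periodic analytic subvarieties developed in the preliminary sections. The genuinely new difficulties are analytic rather than combinatorial: the cone theory and the descent of int-amplifiedness must be run with the Bott--Chern cohomology $\mathrm{H}^{1,1}_{\mathrm{BC}}$ and with positive currents in place of divisor classes (this is exactly what Theorems~\ref{thm1.1}--\ref{thm1.3} are designed to provide), and one must verify at each step that the model produced — including the base of a Mori fibre contraction — stays in a category to which Theorem~\ref{thmtorus} and the lower-dimensional K\"ahler MMP apply. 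I expect the hardest point to be precisely the K\"ahler version of Step~2, namely that $f_*$ fixes a $K_X$-negative extremal ray after replacing $f$ by a power, inside the cone theorem of \cite{Horing2015mori, horing2016minimal}; granting the existence and termination of the K\"ahler threefold MMP, the remaining arguments are parallel to \cite[Section~6]{meng2018building}.
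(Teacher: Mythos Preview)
Your outline is the right one, and Steps~1, 3, 4 match the paper's strategy. The genuine gap is in Step~2: you assert that ``by Theorem~\ref{thm1.1} the $f_*$-orbit of such a ray cannot accumulate at the boundary of the $K_X$-negative part of the cone'', but Theorem~\ref{thm1.1} says nothing of the sort, and no such accumulation argument appears in the paper. The paper's route to $f_*(R)=R$ is different and more concrete. One first looks at the exceptional locus $E$ of the contraction (an irreducible Moishezon surface for a divisorial ray, a finite union of curves for a small ray, and $E=X$ for a Mori fibre contraction); by Lemma~\ref{keylem1} one has $f^{-i}f^i(E)=E$ for all $i\ge 0$. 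The key new analytic ingredient is Lemma~\ref{keylem2finite}: using the vanishing Lemma~\ref{lemvanish} (which in turn rests on Lemma~\ref{degreelem} and Lemma~\ref{lemopenclosed}), one shows the orbit $\{f^i(E)\}_{i\in\mathbb{Z}}$ is finite, so after a power $f^{-1}(E)=E$. Only then does Lemma~\ref{lemequivariant} (the analytic version of \cite[Lemma~6.2]{meng2018building}) apply to give $f_*(R)=R$ and the equivariant descent. This is packaged as Theorem~\ref{thm8.13} via Propositions~\ref{prodivisorial} and~\ref{proflipdescending}; your cone-theoretic shortcut would bypass precisely the lemma you flag as ``the hardest point'', and as written it is not justified.

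Two smaller points you gloss over. First, the theorem asserts an MMP \emph{over} $Y$, not merely that each $X_i\dashrightarrow Y$ is a morphism. The paper handles this in two pieces: Proposition~\ref{proholomorphic} (via Lemma~\ref{lemtoruskey2} and the Albanese map for spaces with rational singularities) gives holomorphy of $X_i\to Y$, and then a separate argument shows every flipping curve is contracted by $X_i\to Y$ --- otherwise its image would be a proper $f^{-1}$-periodic curve in the $Q$-torus $Y$, contradicting Lemma~\ref{lemma5.6}. Second, the paper does not run the K\"ahler MMP blindly: it first splits on the dimension of the MRC base. If the base has dimension $\le 1$ then $X$ is projective (Lemma~\ref{baselessthan1}) and one simply quotes the projective theorem; the genuinely K\"ahler argument only runs under \textbf{Hyp~B} (MRC base a surface), where \cite{Horing2015mori} produces the Mori fibre space onto a $\mathbb{Q}$-factorial klt K\"ahler surface to which \cite{fujino2019minimal} and Proposition~\ref{torus5} apply.
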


Finally, we pose the question below. If $X$ is assumed to be projective, the answer is affirmative by Fakhruddin's very motivating result (cf.~\cite[Lemma 5.1]{fakhruddin2002questions}).

\begin{question}\label{question1.6}
Suppose that $f:X\rightarrow X$ is an int-amplified endomorphism of a normal compact K\"ahler space. Does there exist a periodic point of $f$? If so, is the set of $f$-periodic points $\textup{Per}(f)$ open dense in $X$?
\end{question}

\begin{remark}[Differences with previous papers]
\textup{In this article, we fix our attention in the complex analytic setting. Our first work is to sort out the notation and properties in compact K\"ahler spaces which are analogous to normal projective settings. Most of them are well-known to experts, but we were not able to find a good reference for these results. So we give the complete proofs in the preliminary part (cf.~Propositions \ref{projection formula} and \ref{lembignef}). Second, in the non-projective setting, we do not have ample divisors and hence an effective cycle cannot be simply written as a summand of the intersection of ample divisors. However, we shall apply the openness of K\"ahler cones to develop some numerical statements (cf.~Lemmas \ref{lemopenclosed}\,$\sim$\,\ref{lemvanish}). As an application, we apply these new results to show Theorem \ref{thmtorus}. Comparing with \cite[Theorem 5.2]{meng2017building}, we cannot follow the original proof to show Theorem \ref{thmtorus} due to the lack of \cite[Theorem 1.1]{greb2016etale}. }
   %In the article, the author will skip the same proof and highlight the differences when extending previous results.}
\end{remark}

 The proofs of Theorems \ref{thm1.1}\,$\sim$\,\ref{thm1.3} are in Section \ref{section3}. The proof of Theorem \ref{thmtorus} is in Section \ref{section6}. The proof of Theorem \ref{thmmmpthreefold} is in Section \ref{section8}. Now, for the organization of the paper, we begin with  preliminaries in Section 2.

\subsubsection*{\textbf{\textup{Acknowledgments}}}
The author would like to thank Professor De-Qi Zhang for many inspiring ideas and discussions,   Professor Tien-Cuong Dinh for answering   questions about differential forms and currents, and Professor Andreas H\"oring for discussing the finiteness problem of surjective endomorphisms of general compact K\"ahler spaces. He also thanks the referees for very careful reading and many constructive suggestions to improve the paper.
The author was supported by a President's Graduate Scholarship of NUS.

\section{Preliminaries}\label{section2}
Let $X$ be a normal compact complex space. We refer to \cite{ancona2006differential}, \cite{grauert1955zur} and \cite{ueno2006classification} for basic notation and properties for complex spaces.  In this preliminary, we shall recall the differential forms (resp. currents) and their pull-backs (resp. push-forwards) in the singular setting. 

Let $f:X\rightarrow Y$ be a surjective morphism between normal compact complex spaces. Here, a morphism is a holomorphic map in complex geometry. The morphism $f$ is said to be \textit{finite} (resp. \textit{generically finite}) if $f$ is proper and has discrete fibres (resp. proper and finite outside a nowhere dense analytic closed subspace of $Y$).  An \textit{endomorphism} of $X$ is a surjective holomorphic map $f:X\rightarrow X$.

In this article, we mainly deal with the singular case. Most of the properties in complex manifolds need to be extended to the singular setting. The following idea will be used more than once during the preliminary part.

\setlength\parskip{8pt}

\textbf{Main Reduction:} Let $f:X\to Y$ be a surjective morphism between normal compact complex spaces. Since $Y$ is compact, by Hironaka desingularization theorem, there exists a modification $h:\widetilde{Y}\rightarrow Y$ obtained by a finite sequence of blowing-up along smooth subspaces, such that $\widetilde{Y}$ is a complex manifold. By \cite[Chapter \Rmnum{1}, Lemma 7.1]{ancona2006differential}, $h:\widetilde{Y}\rightarrow Y$ lifts to $g':X'\rightarrow X$ obtained by a finite sequence of blowing-up such that $f\circ g'$ factors through $\widetilde{Y}$ with the induced morphism $f':X'\to \widetilde{Y}$. Taking a resolution  $\sigma:\widetilde{X}\rightarrow X'$,  we get $g:=g'\circ\sigma:\widetilde{X}\rightarrow X$, $\widetilde{f}:=f'\circ \sigma:\widetilde{X}\to \widetilde{Y}$ and the commutative diagram $f\circ g=h\circ \widetilde{f}$, such that all of the morphisms are surjective.\setlength\parskip{0pt}
% \[\xymatrix{\widetilde{X}\ar[r]^{\widetilde{f}}\ar[d]_{g}&\widetilde{Y}\ar[d]^{h}\\
%X\ar[r]^{f}&Y.}
%\]
By the classical differential geometry, the morphism $\widetilde{f}$ admits very nice properties. Hence, to detect good properties on $f$, we need to fix our attention to the resolution $h$ especially to a single blow-up.
\begin{remark}
\textup{Sometimes, the choice of $\widetilde{X}$ varies. For example, we can also choose $\widetilde{X}$ to be the resolution of an irreducible component of the product $X\times_Y\widetilde{Y}$ dominating $X$ and also get a commutative diagram.}
\end{remark}

\subsection{Differential forms and currents}\label{subsection2.2} In this subsection, we refer to \cite{Dem85} for the definitions and properties below.

Let $X$ be a normal compact complex space of dimension $n$. Then, $X$ is locally a closed analytic subset of an open subset of $\mathbb{C}^N$, i.e., for any $x\in X$, there exists an open neighbourhood $U$ of $x$ and an open subset $\Omega\subseteq \mathbb{C}^N$ such that $j:U\rightarrow\Omega$ is a closed embedding. \textit{A differential form $\omega$ of type $(p,q)$} on $X$ is a differential form on the smooth locus $X_{\text{reg}}$ such that for any $x\in X$, with the closed embedding $x\in U\subseteq \Omega\subseteq \mathbb{C}^N$, there exists a differential form $\alpha$ on $\Omega$ such that $\omega|_{X_{\text{reg}}}=j^*\alpha|_{X_{\text{reg}}}$ locally. We denote by $\mathcal{C}^{p,q}$ the space of differential $(p,q)$-forms.

A \textit{current} $T$ of bidegree $(p,q)$ is defined as an element in the dual space of differential (smooth) $(n-p,n-q)$-forms. We denote by  $\mathcal{D}^{p,q}(X)$ the space of $(p,q)$-currents with bidimension $(n-p,n-q)$ on $X$. It is well-defined, as well as the  closedness and positivity. %(see \cite[pp. 9]{Demailly1985measures} for more information). %Moreover, by Poincar\'e duality, Hodge theory and Stokes' theorem, we can also regard each closed $(p,q)$-current $T$ as a class of $H^{p,q}(X_{\text{reg}}, \mathbb{C})$ (cf.~\cite{dinh2015compact}).

Suppose $f:X\rightarrow Y$ is a morphism between two normal compact complex spaces of  dimensions  $m$ and $n$, respectively. 
We refer the readers to \cite{horing2016minimal} for  the pullback of differential forms by pulling back forms locally. 
Suppose further that $f$ is proper. Then, one can define the push-forward of currents $f_*$ by setting $\left<f_*T,\alpha\right>:=\left<T,f^*\alpha\right>$ for each $\alpha\in\mathcal{C}^{p,q}(X)$. For any $(p,p)$-current $T$ and $(q,q)$-form $\alpha$ on $X$, we denote by $T\wedge \alpha$ the $(p+q,p+q)$-current such that for each $(n-p-q,n-p-q)$-form $\beta$ on $X$, 
$\left<T\wedge\alpha, \beta\right>=\left<T,\alpha\wedge\beta\right>$.
In addition, since each differential $(p,q)$-form $\alpha$ on $X$ can also be naturally regarded as a $(p,q)$-current, we write $[\alpha]$ to represent a $(p,q)$-current, i.e., for each $(n-p,n-q)$-form $\beta$, $\left<[\alpha],\beta\right>:=\int_{X_{\text{reg}}}\alpha\wedge\beta$.

 We recall the following well-known projection formula which will be heavily used in this article.

\begin{pro}[Projection formula]\label{projection formula}
Suppose $f:X\rightarrow Y$ is a proper surjective morphism between normal compact complex spaces with $\dim X=m$ and $\dim Y=n$. Then for any $(l,l)$-current $T$ on $X$ and differential $(k,k)$-form $\beta$ on $Y$, we have $f_*(T\wedge f^*\beta)=f_*T\wedge\beta$ in the sense of currents. In particular, if $m=n$ (and hence $f$ is generically finite), then $f_*[f^*\beta]=\deg f\cdot [\beta]$ as currents.
\end{pro}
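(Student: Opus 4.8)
The plan is to reduce everything to the two reductions already set up in the preliminaries and then quote the smooth case. First I would verify the statement when $X$ and $Y$ are complex \emph{manifolds}: in that case $f$ is a proper surjective holomorphic map between manifolds, the pull-back $f^*\beta$ and push-forward $f_*T$ are the usual operations on forms and currents, and the projection formula $f_*(T\wedge f^*\beta)=f_*T\wedge\beta$ is classical (it follows by testing against a smooth form $\gamma$ on $Y$ and using $\langle f_*(T\wedge f^*\beta),\gamma\rangle=\langle T, f^*\beta\wedge f^*\gamma\rangle=\langle T, f^*(\beta\wedge\gamma)\rangle=\langle f_*T,\beta\wedge\gamma\rangle=\langle f_*T\wedge\beta,\gamma\rangle$, together with the compatibility $f^*(\beta\wedge\gamma)=f^*\beta\wedge f^*\gamma$ of pull-back with wedge product). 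This is a routine unwinding of the definitions of $f_*$, $\wedge$, and $[\cdot]$ recalled just above the statement, so I would only sketch it.

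Next I would pass from the smooth case to the general singular case via \textbf{Reduction A}. Form the commutative diagram with resolutions $g:\tilde X\to X$, $h:\tilde Y\to Y$ and $\tilde f:\tilde X\to\tilde Y$ with $h\circ\tilde f=f\circ g$, all morphisms surjective. Given an $(l,l)$-current $T$ on $X$ and a $(k,k)$-form $\beta$ on $Y$, I would lift: since $g$ is a modification it is generically finite of degree one, and one checks $g_*g^*T=T$ for currents on a normal space (test against forms and use that $g$ is an isomorphism over a dense open set whose complement is a proper analytic subset, which has measure zero). Write $T=g_*\tilde T$ with $\tilde T=g^*T$ and use $f^*\beta=g_*g^*f^*\beta$ is not quite what I want — rather I would directly compute $f_*(T\wedge f^*\beta)$ by testing against a smooth $(n-l-k,n-l-k)$-form $\alpha$ on $Y$: $\langle f_*(T\wedge f^*\beta),\alpha\rangle=\langle T, f^*\beta\wedge f^*\alpha\rangle=\langle g_*\tilde T, f^*(\beta\wedge\alpha)\rangle=\langle\tilde T, g^*f^*(\beta\wedge\alpha)\rangle=\langle\tilde T,\tilde f^*h^*(\beta\wedge\alpha)\rangle$. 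Now apply the smooth projection formula for $\tilde f$ (and functoriality of pull-back through $h^*$, $g^*$ as defined in Subsection 2.2) to move $h^*\beta$ across, then push back down through $g_*$, landing on $\langle f_*T\wedge\beta,\alpha\rangle$. The compatibilities $g^*f^*=\tilde f^*h^*$ and $(f^*\beta)|_{U_{\mathrm{reg}}}$ being the form-theoretic pull-back away from the singular locus are exactly what Subsection 2.2 and \cite[Lemma 1.3]{Demailly1985measures} provide, so the chain of equalities is legitimate once one is careful that all pairings are computed as integrals over the regular loci, where the singular sets (being proper analytic subsets) are negligible.

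For the ``in particular'' clause, take $m=n$, so $f$ is generically finite; set $l=0$ and $T=[1]$ (the current of integration on $X$, i.e. integration of the top-degree part), equivalently just apply the formula with $T=[\omega]$ for a form $\omega$ and then specialize. The claim $f_*[f^*\beta]=\deg f\cdot[\beta]$ reduces to the change-of-variables statement $\int_{X_{\mathrm{reg}}}f^*\beta\wedge f^*\gamma=\deg f\cdot\int_{Y_{\mathrm{reg}}}\beta\wedge\gamma$ for a test form $\gamma$ of complementary degree, which holds because $f$ restricts to a degree-$\deg f$ covering over the complement of a proper analytic (hence measure-zero) subset of $Y_{\mathrm{reg}}$, and pull-back of forms commutes with wedge.

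I expect the only genuine subtlety — the ``hard part'' — to be bookkeeping at the singular loci: one must make sure that the exceptional set of $g$, the non-finite locus of $f$, and the singular loci of $X$ and $Y$ can all be discarded when the pairings are evaluated as integrals (this is where one invokes that a proper analytic subset has zero Lebesgue measure and that forms/currents are defined on the regular parts by local embeddings), and that the pull-back operation $f^*$ on forms defined in Subsection 2.2 is compatible with composition, i.e. $g^*\circ f^* = \tilde f^*\circ h^*$, which follows from the gluing statement in \cite[Lemma 1.3]{Demailly1985measures}. Everything else is a formal transfer from the classical smooth projection formula.
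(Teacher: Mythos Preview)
Your proposal is correct and close to the paper's proof, but you over-engineer the first assertion. The chain of equalities you wrote down for the ``smooth case''
\[
\langle f_*(T\wedge f^*\beta),\gamma\rangle=\langle T, f^*\beta\wedge f^*\gamma\rangle=\langle T, f^*(\beta\wedge\gamma)\rangle=\langle f_*T,\beta\wedge\gamma\rangle=\langle f_*T\wedge\beta,\gamma\rangle
\]
uses nothing beyond the definitions of $f_*$, $\wedge$, and the compatibility $f^*(\beta\wedge\gamma)=f^*\beta\wedge f^*\gamma$ --- and all of these are already set up in Subsection~2.2 for singular $X$ and $Y$. So this computation \emph{is} the proof of the general projection formula; there is no need to invoke Reduction~A afterwards. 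That is exactly what the paper does: one line, no resolutions.

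For the ``in particular'' clause you and the paper agree in substance. The paper makes the route through Reduction~A explicit (lift to $\tilde f:\tilde X\to\tilde Y$ between manifolds, use the Gysin map for $\tilde f_*\tilde f^*$, then push down through the resolutions $g,h$), and it spells out in detail the point you flagged as the ``hard part'': that the exceptional locus $E\subset\tilde Y$ has Lebesgue measure zero, proved by an induction on the order of a local defining function, so that $\int_{\tilde Y}h^*\alpha=\int_{Y_{\mathrm{reg}}}\alpha$ for top-degree $\alpha$. Your direct change-of-variables argument over the unramified locus amounts to the same thing once that measure-zero fact is in hand; the paper just chooses to route the computation through the resolved diagram rather than argue directly on the covering.
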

\begin{proof}
For any differential $(n-k-l,n-k-l)$-form $\gamma$ on $Y$, it follows from the push-forward of currents that
$$\left<f_*(T\wedge f^*\beta),\gamma\right>=\left<T\wedge f^*\beta,f^*\gamma\right>=\left<T,f^*(\beta\wedge\gamma)\right>=\left<f_*T,\beta\wedge\gamma\right>=\left<f_*T\wedge\beta,\gamma\right>,$$
which proves the first statement. Now we take two steps to prove  the latter claim. 

Step 1. From \textbf{Main Reduction}, we get a  commutative diagram  $h\circ\widetilde{f}=f\circ g$ such that $\widetilde{f}:\widetilde{X}\rightarrow\widetilde{Y}$ is a generically finite morphism of complex manifolds and $\deg f=\deg\widetilde{f}$. Then, it follows from the Gysin morphism that $\widetilde{f}_*\widetilde{f}^*\alpha=\deg f\cdot \alpha$ for each differential form $\alpha$ on $\widetilde{Y}$ (cf.~\cite[Remark 7.29]{voisin2007hodge}).

Step 2. For any$(n-k,n-k)$-form $\gamma$ on $Y$, we have
$$\left<f_*[f^*\beta],\gamma\right>=\left<[f^*\beta],f^*\gamma\right>=\int_{X_{\text{reg}}}f^*(\beta\wedge\gamma)=\int_{\widetilde{X}}g^*f^*(\beta\wedge \gamma).$$
The third equality is because a proper closed subvariety of a compact K\"ahler manifold has Lebesgue measure zero. Then,
$$\int_{\widetilde{X}}g^*f^*(\beta\wedge \gamma)=\int_{\widetilde{X}}\widetilde{f}^*h^*(\beta\wedge \gamma)=\deg\widetilde{f}\cdot\int_{\widetilde{Y}}h^*(\beta\wedge\gamma).$$
Note that the right hand side is equal to $\deg f\cdot \int_{Y_{\text{reg}}}(\beta\wedge\gamma)$ which is nothing but $\deg f\cdot \left<[\beta],\gamma\right>$. Then, our second statement holds.
\end{proof}

\subsection{Classes and cones}\label{section2.3}
In this subsection, we recall the notation and terminology which will be heavily used in this article. First we refer the readers to \cite[Definition 4.6.2]{boucksom2013an} or \cite[Section 3]{horing2016minimal} for the definition of Bott-Chern cohomology $\textup{H}_{\textup{BC}}^{1,1}(X)$ for singular spaces.

On the one hand,
for any holomorphic map $f:X\rightarrow Y$ between normal compact complex spaces, the pull-back of differential forms $f^*$ induces a well-defined morphism  $f^*:\textup{H}^{1,1}_{\textup{BC}}(Y)\rightarrow \textup{H}^{1,1}_{\textup{BC}}(X)$ (cf.~\cite[Remark 3.2]{horing2016minimal}): for any $c\in \textup{H}^{1,1}_{\textup{BC}}(Y)$,  we can write $c=\omega+dd^cu$ where $\omega$ is a $(1,1)$-form with local potentials and $u$ is a global smooth function. Then, the pull-back  $f^*c$ is  written as the form $f^*\omega+dd^c(u\circ f)$ and it is easy to see that $f^*\omega$ is a $(1,1)$-form with local potentials since the real parts of holomorphic functions of $Y$ pull back to real parts of holomorphic functions of $X$. Hence, $f^*c$ defines an element in $\textup{H}^{1,1}_{\textup{BC}}(X)$. 

On the other hand, if $X$ has at worst rational singularities in the Fujiki class $\mathcal{C}$, then it follows from the Leray sequence (cf.~\cite[Remark 3.7]{horing2016minimal}) that there is an injection: 
$\textup{H}^{1,1}_{\textup{BC}}(X)\hookrightarrow \textup{H}^2(X,\mathbb{R})$, 
and hence one can define the intersection product on the Bott-Chern cohomology space $\textup{H}^{1,1}_{\textup{BC}}(X)$ via the cup-product for $\textup{H}^2(X,\mathbb{R})$. In addition, this embedding implies that  $\textup{H}^{1,1}_{\textup{BC}}(X)$  is a finite dimensional vector space.

\begin{remark}\label{remark2.6}
\textup{Suppose $X$ is a compact K\"ahler manifold. Then by Hodge theory, there is a natural isomorphism from the Bott-Chern cohomology group to the Dolbeault group with real coefficients $\textup{H}^{1,1}_{\textup{BC}}(X)\cong \textup{H}^{1,1}(X,\mathbb{R}):=\textup{H}^{1,1}(X,\mathbb{C})\cap \textup{H}^2(X,\mathbb{R})$.}
\end{remark}

To make connections with the normal projective setting (\cite[Definition 3.6 and 3.8]{horing2016minimal}),  
let $\textup{N}_1(X)$  be the vector space of real closed currents of bidegree $(n-1,n-1)$  modulo the following equivalence: $T_1\equiv T_2$ if and only if: $T_1(\eta)=T_2(\eta)$ for all real closed $(1,1)$-forms $\eta$ with local potentials.

\begin{remark}\label{pushforward subvariety}
\textup{Suppose $f:X\rightarrow Y$ is a proper morphism between normal compact complex spaces and $T_C$ (or denoted by $[C]$) is a \textit{current of integration} (cf.~\cite[Section 3.B]{horing2016minimal}) of an irreducible curve $C$ on $X$. We claim that:  if $f(C)$ is a curve, then $f_*[C]=\deg f|_C\cdot [f(C)]$; if $C$ is contracted by $f$, then $f_*[C]=0$. This coincides with the push-forward of cycles in the projective setting.}
\end{remark}
\begin{proof}
We follow the \textbf{Main Reduction}. First, we prove that for any resolution $h:\widetilde{Y}\rightarrow Y$ with $E\subseteq\widetilde{Y}$ a contracted subvariety, $h_*[E]=0$. Indeed, for the local map $h':\widetilde{Y}\rightarrow Y\hookrightarrow \mathbb{C}^N$ and each differential $(k,k)$-form $\gamma$ on $Y$ with $\gamma=i^*\gamma_0$ and $k=\dim E$,
$$\left<h_*[E],\gamma\right>=\left<[E],h^*\gamma\right>=\left<[E],(h')^*\gamma_0\right>=0.$$
The last equality is due to $\dim h'(E)<\dim E$ and $h'$ is locally a holomorphic map between manifolds. 
Therefore, $h_*[E]=0$. 
Second, suppose the curve $C$ is contracted by $f$. Then, no matter $C\subseteq\text{Sing}\,X$ or not, any analytic curve of $\widetilde{X}$ dominating $C$ must be contracted  by either $\widetilde{f}$ or $h$. 
Hence, $f_*[C]=0$ when $C$ is contracted by $f$.

From now on, suppose $f(C)$ is a curve. Then for any differential $(n-1,n-1)$-form $\gamma$ on $Y$, we apply \textbf{Main Reduction}. If $C\not\subseteq\text{Sing}(X)$, set $\widetilde{C}\subseteq \widetilde{X}$ to be the proper transform of $C$; if $C\subseteq\text{Sing}(X)$,  let $\widetilde{C}\subseteq \widetilde{X}$ be any curve dominating $C$ (which exists since the resolution $g$ is projective). In both cases,
\begin{align*}
\left<f_*[C],\gamma\right>=\int_{C_{\text{reg}}} f^*\gamma
%=\frac{1}{\deg g|_{\widetilde{C}}}\cdot\int_{\widetilde{C}}g^*f^*\gamma
=\frac{1}{\deg g|_{\widetilde{C}}}\cdot\int_{\widetilde{C}}\widetilde{f}^*h^*\gamma
%=\frac{\deg\widetilde{f}|_{\widetilde{C}}\cdot\deg h|_{\widetilde{f}(\widetilde{C})}}{\deg g|_{\widetilde{C}}}\cdot\int_{h(\widetilde{f}(\widetilde{C}))}\gamma
=\deg f|_C\cdot\int_{f(C)}\gamma=\deg f|_C\cdot\left<[f(C)],\gamma\right>.
\end{align*}
Therefore, we complete the proof of our remark.
\end{proof}

Let $\overline{\textup{NA}}(X)\subseteq \textup{N}_1(X)$  be the closed cone generated by the classes of positive closed $(n-1,n-1)$-currents. 
We define the Mori cone $\overline{\textup{NE}}(X)\subseteq \overline{\textup{NA}}(X)$ as the closure of the cone generated by the currents of integration $T_C$, where $C\subseteq X$ is an irreducible curve.

From now on, we begin to discuss the positivity and cones in complex geometry. In the beginning, we refer the readers  to \cite[Chapter \Rmnum{1}]{Demailly2012complex} and \cite[Introduction and main results]{Demailly1992regularization} for the definition and basic properties of (strictly) plurisubharmonic (psh) functions.

Suppose $X$ is a  normal compact complex space of dimension $m$. Then, it follows from \cite[Chapter \Rmnum{1}, Theorem 5.8]{Demailly2012complex} that a smooth function $\varphi$ is psh on 
$X$ if and only if $dd^c\varphi\ge 0$ is a positive current.  %where $d^c=i(\overline{\partial}-\partial)$ and thus $dd^c=2i\partial\overline{\partial}$.

A normal compact complex space $X$ is said to be \textit{K\"ahler} if there exists a K\"ahler form $\omega$ on $X$, i.e., a  positive closed real $(1,1)$-form $\omega\in\mathcal{C}^{1,1}(X)$ such that the following holds. For every point $x\in X_{\text{sing}}$, there exist an open neighbourhood $x\in U\subseteq X$, a closed embedding $i_U:U\subseteq \Omega$ into an open set $\Omega\subseteq\mathbb{C}^N$ and a strictly psh $C^{\infty}$-function $f:\Omega\rightarrow\mathbb{C}$ such that $\omega|_{U\cap X_{\text{reg}}}=(dd^cf)|_{U\cap X_{\text{reg}}}$. 

Similar to the notation of relative ample divisor related to a projective morphism, we refer the readers to  \cite{varouchas1989kahler} for the information of relative K\"ahler metric related to a K\"ahler morphism. Now, we come  to  discuss the invariant cones.

\begin{defi}\label{defi2.11}
Let $(X,\omega)$ be a normal compact K\"ahler space, where $\omega$ is a fixed K\"ahler form. A class $[\alpha]\in \textup{H}^{1,1}_{\textup{BC}}(X)$ (where $\alpha$ is a smooth form) is said to be
\begin{enumerate}
\item[(1)] K\"ahler, if it contains a representative of a smooth K\"ahler form, i.e., there is a smooth function $\varphi$ such that $\alpha+dd^c\varphi\ge \varepsilon\omega$ for some $\varepsilon>0$;
\item[(2)] nef, if for every $\varepsilon>0$, there is a smooth function $f_{\varepsilon}$ such that $\alpha+dd^cf_{\varepsilon}\ge-\varepsilon\omega$;\item[(3)] pseudo-effective, if it contains a positive current (that may not be smooth), i.e., there exists an almost psh function $\varphi=\psi+h$ where $\psi$ is psh and $h$ is smooth on $X$ such that $\alpha+dd^c\varphi\ge 0$ in the sense of currents. 
\item[(4)] big, if it contains a K\"ahler current, i.e., there exists an almost psh function $\varphi$ such that $\alpha+dd^c\varphi\ge\varepsilon\omega$ in the sense of currents for some $\varepsilon\ge 0$.
\end{enumerate}
\end{defi}

Let $X$ be a normal compact K\"ahler space. We use the following to represent the cones in Definition \ref{defi2.11} to make connections with the projective setting.

\begin{itemize}
\item $\mathcal{K}(X)$: the cone of K\"ahler classes in $\textup{H}^{1,1}_{\textup{BC}}(X)$;
 \item $\text{Nef}(X)$: the cone of nef classes in $\textup{H}^{1,1}_{\textup{BC}}(X)$;  
\item $\mathcal{E}(X)$: the cone of pseudo-effective classes in $\textup{H}^{1,1}_{\textup{BC}}(X)$;
\item $\mathcal{E}^0(X)$: the cone of big classes in $\textup{H}^{1,1}_{\textup{BC}}(X)$.
\end{itemize}

Observe that $\mathcal{K}(X)$ is an open (convex) cone (cf.~\cite[Proposition 3.6]{graf2019finite}) and $\text{Nef}(X)\subseteq \mathcal{E}(X)$ are closed convex cones by the weak compactness of bounded sets in the space of currents (cf.~\cite[Section 6]{Demailly1992regularization}). Besides, it follows from \cite[Remark 3.12]{horing2016minimal} that $\text{Nef}(X)=\overline{\mathcal{K}}(X)$. %Furthermore,  a class $[\alpha]\in \mathcal{E}^0(X)$ if and only if it can be locally represented by a positive closed current $T$ of bidegree $(1,1)$ which satisfies $T\ge\varepsilon\omega$ in the sense of currents for some $\varepsilon>0$ and some smooth positive hermitian form $\omega$ on $X$. Such a current is called a \textit{K\"ahler current} (cf.~\cite[Definition 1.6]{Demailly2004numerical}).
Furthermore, $\mathcal{E}^0(X)$ is  the interior part of $\mathcal{E}(X)$.

We formulate the following important proposition which states that all of these cones mentioned above are preserved under suitable morphisms. Hence, we can apply the cone theory developed in \cite{meng2017building} to study the Bott-Chern cohomology space. 
\begin{pro}[Invariant cones]\label{lembignef}
Suppose $f:X\rightarrow Y$ is a proper surjective morphism of two normal compact K\"ahler spaces. Then, for any $[\alpha]\in \textup{H}_{\textup{BC}}^{1,1}(Y)$, the following hold.
\begin{enumerate}
\item[\textup{(1)}] If $[\alpha]$ is pseudo-effective, then so is $f^*[\alpha]$;
\item[\textup{(2)}] If $[\alpha]$ is nef, then so is $f^*[\alpha]$ and the converse holds when $f$ is further assumed to be a modification of normal compact K\"ahler threefolds;
\item[\textup{(3)}] If $f$ is further assumed to be finite and $[\alpha]$ is K\"ahler, then so is $f^*[\alpha]$;
\item[\textup{(4)}] If $f$ is a modification and $[\alpha]$ is big, then so is $f^*[\alpha]$;
\item[\textup{(5)}] If $f$ is generically finite and $[\alpha]$ is big, then so is $f^*[\alpha]$.
\end{enumerate}
\end{pro}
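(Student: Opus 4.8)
\textbf{Proof proposal for Proposition \ref{lembignef}.}

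The plan is to treat the five statements by reducing, via \textbf{Reduction A} and \textbf{Reduction B}, to the classical facts about pull-back of positive/nef/big/K\"ahler classes on complex \emph{manifolds}, together with the pull-back/push-forward formalism for currents with local potentials established in Subsection \ref{subsection2.2} and Proposition \ref{projection formula}. Throughout, I fix K\"ahler forms $\omega_X$ on $X$ and $\omega_Y$ on $Y$. For (1), write a pseudo-effective $[\alpha]$ as $\alpha + dd^c\varphi \ge 0$ with $\varphi$ almost psh; since $f$ is holomorphic, $f^*\alpha + dd^c(\varphi\circ f)$ is a well-defined $(1,1)$-current with local potentials on $X$, and positivity is preserved because pulling back a plurisubharmonic function along a holomorphic map yields a plurisubharmonic function (here one uses that $f$ is surjective so $\varphi\circ f$ is not identically $-\infty$, and the remark at the end of Subsection \ref{subsection2.2} that positive closed $(1,1)$-currents always admit pull-backs). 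For (3), if $f$ is finite and $\alpha + dd^c\varphi \ge \varepsilon\,\omega_Y$ with $\varphi$ smooth, then $f^*(\alpha + dd^c\varphi) = f^*\alpha + dd^c(\varphi\circ f) \ge \varepsilon\, f^*\omega_Y$; the point is that $f^*\omega_Y$ is again a \emph{K\"ahler} form on $X$ when $f$ is finite — locally $f$ embeds into a finite map between smooth spaces after \textbf{Reduction B}, and the pull-back of a strictly psh potential under a finite holomorphic map is strictly psh (no positive-dimensional fibres on which the Levi form degenerates), so $f^*\omega_Y$ dominates $c\,\omega_X$ for some $c>0$ by compactness; hence $f^*[\alpha]$ is K\"ahler.

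For the nef statement (2), first direction: if $[\alpha]$ is nef, then for each $\varepsilon>0$ pick a smooth $f_\varepsilon$ with $\alpha + dd^c f_\varepsilon \ge -\varepsilon\,\omega_Y$; pulling back gives $f^*\alpha + dd^c(f_\varepsilon\circ f) \ge -\varepsilon\, f^*\omega_Y \ge -\varepsilon C\,\omega_X$ where $C$ bounds $f^*\omega_Y$ against $\omega_X$ (such $C$ exists by compactness, $f$ being holomorphic). Letting $\varepsilon\to 0$ shows $f^*[\alpha]\in \operatorname{Nef}(X)$. For the converse under the extra hypothesis that $f$ is a modification of compact K\"ahler threefolds, the idea is to use the push-forward of currents with local potentials available for bimeromorphic morphisms between normal compact K\"ahler spaces with rational singularities (\cite[Lemma 3.4]{horing2016minimal}, cited just before Remark \ref{remark2.6}): if $f^*[\alpha]$ is nef, then by the negativity lemma / the structure of the MMP for K\"ahler threefolds (\cite{horing2016minimal}) one has $[\alpha] = f_*f^*[\alpha]$ and nefness descends, since any curve $C\subseteq Y$ lifts to a curve $\tilde C\subseteq X$ with $f_*[\tilde C] = (\deg f|_{\tilde C})[C]$ by Remark \ref{pushforward subvariety}, and $f^*[\alpha]\cdot \tilde C = [\alpha]\cdot f_*[\tilde C] \ge 0$ by the projection formula; combined with the threefold characterization of the nef cone via curves and the Hodge index type results this forces $[\alpha]$ nef. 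For (4), if $f$ is a modification and $\alpha + dd^c\varphi \ge \varepsilon\,\omega_Y$ is a K\"ahler current, then $f^*\alpha + dd^c(\varphi\circ f) \ge \varepsilon\, f^*\omega_Y$; now $f^*\omega_Y$ need not be K\"ahler (it is only semipositive, degenerating along the exceptional locus), but it \emph{is} a nonzero nef and big class on $X$ since $\int_X (f^*\omega_Y)^{\dim X} = \int_Y \omega_Y^{\dim Y} > 0$ by Proposition \ref{projection formula}. Hence $f^*[\alpha]$ contains a positive current dominating a big class, so by the standard fact that pe $+$ big $=$ big (a small perturbation argument: $f^*\omega_Y \ge \delta\,\omega_X + (f^*\omega_Y - \delta\,\omega_X)$ with the latter still pe for $\delta$ small, using Demailly regularization \cite{Demailly1992regularization}) we conclude $f^*[\alpha]$ is big.

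For (5), $f$ generically finite, the plan is to go through \textbf{Reduction A}: form the commutative square with $\tilde f:\tilde X\to\tilde Y$ generically finite between manifolds and $\deg f = \deg\tilde f$. On manifolds, the pull-back of a big class under a generically finite map is big — $\int_{\tilde X}(\tilde f^*\beta)^{\dim} = \deg\tilde f\int_{\tilde Y}\beta^{\dim} > 0$ for $\beta$ big, and one upgrades volume positivity to a K\"ahler current via \cite{Demailly2004numerical}. Then one pushes down along $g$: since $g$ is a modification, (4) applied to $g$ (or rather its converse, via $g_*$) lets one transfer bigness of $g^*(f^*[\alpha]) = \tilde f^*(h^*[\alpha])$ back to $f^*[\alpha]$ on $X$, using again \cite[Lemma 3.4]{horing2016minimal} and $g_*g^* = \mathrm{id}$ on $\mathrm H^{1,1}_{\mathrm{BC}}$ from the projection formula. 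The main obstacle I anticipate is the converse direction in (2): unlike the other items, which are essentially functoriality of positivity under holomorphic maps plus a volume computation, descending nefness through a modification genuinely requires input specific to K\"ahler \emph{threefolds} — the cone theorem and the description of $\overline{\mathrm{NE}}$ of \cite{horing2016minimal}, \cite{Horing2015mori} — and care is needed to ensure all spaces in sight have at worst rational singularities so that $\mathrm H^{1,1}_{\mathrm{BC}} \hookrightarrow H^2$ and the push-forward of currents with local potentials are both available; this is presumably why the hypothesis is restricted to threefolds rather than stated in general.
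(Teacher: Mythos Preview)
Your arguments for (1) and the forward direction of (2) match the paper's. The converse in (2) is simply cited from \cite[Lemma 3.13]{horing2016minimal} in the paper; your curve-based sketch is not sufficient, since on a K\"ahler space nefness is \emph{not} characterized by nonnegativity on curves alone.

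The genuine gap is in (3). Your claim that ``the pull-back of a strictly psh potential under a finite holomorphic map is strictly psh (no positive-dimensional fibres on which the Levi form degenerates)'' is false. Take $f:\mathbb{C}\to\mathbb{C}$, $z\mapsto z^2$, and $\varphi(w)=|w|^2$; then $(\varphi\circ f)(z)=|z|^4$ has $dd^c|z|^4 = 4|z|^2\,i\,dz\wedge d\bar z$, which vanishes at the branch point $z=0$. So $f^*\omega_Y$ can fail to be K\"ahler along the ramification locus even though all fibres are discrete. The paper's fix is not a triviality: it invokes a result of V\^{a}j\^{a}itu \cite[p.~253]{vajaitu1996kahler} producing a \emph{global} smooth function $\psi:X\to\mathbb{R}_{\ge 0}$ such that $(\varphi_j\circ f+\psi)|_{V_j}$ is strictly psh on a suitable cover $\{V_j\}$. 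Because $\psi$ is global, $dd^c(\varphi_j\circ f+\psi)$ and $dd^c(\varphi_j\circ f)$ lie in the same Bott--Chern class, so one obtains a genuine K\"ahler representative of $f^*[\alpha]$.

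Your approaches to (4) and (5) also diverge from the paper and carry their own issues. For (4) you assert that $f^*\omega_Y$ is big because it is nef with positive top self-intersection; this is the Demailly--Paun criterion, stated for \emph{manifolds}, whereas $X$ may be singular. The paper proceeds constructively instead: Hironaka's Chow lemma dominates the modification $f$ by compositions $p:Z\to Y$, $q:Z\to X$ of blow-ups along smooth centres; for a single blow-up one checks directly (using the computation in \cite[p.~608]{Griffiths1994principles}) that $Cp^*\omega_Y-E$ is positive for $C\gg 0$, so $p^*[\omega_Y]$ is big on $Z$, and then a K\"ahler current is pushed forward along $q$ using $q^*\omega_X\le a\,\omega_Z$. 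For (5) the paper takes the Stein factorization $f=f''\circ f'$ with $f'$ a modification and $f''$ finite, applies (3) to see $(f'')^*[\alpha]$ is big (a K\"ahler current pulls back to a current dominating the K\"ahler form $(f'')^*\omega_Y$), and then (4). This is cleaner than your Reduction~A route, which in any case still needs (4) for the resolution $h:\tilde{Y}\to Y$ before you can invoke the manifold statement on $\tilde{Y}$, and requires an extra argument to push bigness down along $g$.
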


\begin{proof}
The pseudo-effective cone is invariant under any holomorphic map since the pull-back of a psh function is still psh, which gives (1). Let $\omega_X$ and $\omega_Y$ be two K\"ahler forms on $X$ and $Y$ respectively such that $f^*\omega_Y\le a\omega_X$ for some constant $a>0$. For any $\varepsilon>0$, since $[\alpha]$ is nef, there exists $g_\varepsilon\in\mathcal{C}^0(Y)$ such that $\alpha+dd^cg_\varepsilon\ge-\frac{\varepsilon}{a}\cdot\omega_Y$. Then
$$f^*\alpha+dd^c(g_{\varepsilon}\circ f)\ge-\frac{\varepsilon}{a} f^*\omega_Y\ge-\varepsilon\omega_X,$$
which proves one direction of (2). For the converse, see \cite[Lemma 3.13]{horing2016minimal}.

For  (3), see \cite[Proposition 3.5]{graf2019finite}.
%Now, we prove (3). By definition, there exists a family $(U_i,\varphi_i)_{i\in I}$  on $Y$ representing $[\alpha]$, where $\varphi_i$ is  smooth and strictly psh  on $U_i$ for all $i\in I$ and locally $\varphi_i$ is the pull-back of a smooth strictly psh function on an open subset $\Omega_i\subseteq\mathbb{C}^{N_i}$ under the local embedding $U_i\hookrightarrow \Omega_i$. Then, one may find an open cover $(V_j)_{j\in J}$ of $X$ and a map $\lambda: J\rightarrow I$ such that for each $j\in J$, $V_j\subseteq f^{-1}(U_{\lambda(j)})$. Since $f$ is a finite morphism of compact spaces, by \cite[pp.253]{vajaitu1996kahler}, there exists a global smooth function $\psi:X\rightarrow \mathbb{R}_{\ge 0}$ such that $(\varphi\circ f+\psi)|_{V_j}$ is strictly psh ($1$-convex) on each $V_j$. Moreover, since $\psi$ is global, locally $dd^c(\varphi_j\circ f+\psi)$ and $dd^c(\varphi_j\circ f)$ lie in the same cohomology class, i.e., $f^*[\alpha]$. As a result, we find a family $(V_j,(\varphi_j\circ f+\psi)|_{V_j})_{j\in J}$ which represents $f^*[\alpha]$, such that $\{V_j\}_{j\in J}$ is a cover of $X$ and $(\varphi_j\circ f+\psi)|_{V_j}$ is strictly psh on each $V_j$. Therefore, the third assertion holds.  
Now, we begin to prove (4). For each big class $[\alpha]$ on $Y$, there exists a K\"ahler form $\omega_Y$ on $Y$ and a representative $T'$ of $[\alpha]$ such that $T'\ge\omega_Y$. Therefore, it is enough to prove that the pull-back of each K\"ahler class on $Y$ is big on $X$ under any modification. 
Let $f:X\rightarrow Y$ be a modification and $\omega_Y$  a K\"ahler form on $Y$. By Hironaka's Chow lemma (cf.~\cite[Theorem 7.8]{ancona2006differential}), there exists a normal compact complex space $Z$ with $p:Z\rightarrow Y$ and $q:Z\rightarrow X$ such that $p=f\circ q$, where $p$ and $q$ are compositions of finite sequences of blowing-up along smooth centers.

We claim that $p^*[\omega_Y]$ is a big class. Indeed, we only need to consider the case when $p$ is a single blow-up. Note that a blow-up of $Y$ is locally the restriction of a blow-up of $\mathbb{C}^N$. Denote by $E$ the exceptional divisor of $p$. Then $E$ is locally the restriction of the exceptional divisor $E_i$ of the blow-up $p_i:\widetilde{U_i}\rightarrow U_i$, where $U_i$ is an open subset of $\mathbb{C}^N$. For the smooth manifold $U_i$, it follows from the calculation  in \cite[Proposition 3.24 and Lemma 3.26]{voisin2007hodge} that for sufficiently large $C_i\gg 1$, $(C_ip_i^*\omega_{U_i}-E_i)|_Y$ is a positive $(1,1)$-form (also cf.~\cite[pp.116]{ancona2006differential}). Since $Y$ is compact, there exists a sufficiently large $C\gg 1$ such that $Cp^*\omega_Y-E$ is a positive $(1,1)$-current. Hence, $p^*[\omega_Y]$ is a big class. 

Moreover, $f^*\omega_Y=q_*q^*f^*\omega_Y=q_*p^*\omega_Y$ as currents by projection formula. Since $p^*[\omega_Y]$ is big, there exists a current $T\in p^*[\omega_Y]=[p^*\omega_Y]$ such that $T\ge \omega_Z$ as currents for some K\"ahler form $\omega_Z$ on $Z$. Then, we have $q_*T\in [f^*\omega_Y]=f^*[\omega_Y]$ such that $q_*T\ge q_*\omega_Z$ as currents. In addition, for any K\"ahler form $\omega_X$ on $X$, there exists $a>0$ such that $q^*\omega_X\le a\omega_Z$. Hence, $q_*T\ge q_*\omega_Z\ge\frac{1}{a}\omega_X$ as currents and then $f^*[\omega_Y]$ is big on $Y$.

Finally, for (5), if $f$ is a generically finite surjective morphism, then taking the Stein factorization of $f$ (cf.~\cite[Theorem 1.9]{ueno2006classification}), we get a modification $f':X\rightarrow Y'$ followed by a finite morphism $f'':Y'\rightarrow Y$. For any big class $[\alpha]$ on $Y$, there exists a K\"ahler current $T\in[\alpha]$ such that $T\ge \omega_Y$ as currents for some K\"ahler form $\omega_Y$ on $Y$. Then, the pull-back $(f'')^*T\ge (f'')^*\omega_Y$ and the class of the right hand side is K\"ahler by (3). Hence, $(f'')^*[\alpha]$ is big. Further, it follows from the assertion (4) that $f^*[\alpha]=(f')^*(f'')^*[\alpha]$ is also big, and thus (5) has been checked.
\end{proof}

Recall that if $f:X\rightarrow Y$ is a surjective morphism between                                                                                                                                                                                                                                                                                                                                                                                                                                                                                                                                                                                                                                                                                                                                                                                                                                                                                                                                                                                                                                                                                                                                                                                                                                                                                                                                                                                                                                                                                                                                                                                                                                                                                                                                                                                                                                                                                                                                                                                                                                                                                                                                                                                                                                                                                                                                                                                                                                                                                                                                                                                                                                                                                                                                                                                                                                                                                                                                                                                                                                                                                                                                                                                                                                                                                                                                                                                                                                                                                                                                                                                          normal projective varieties, then it follows from the projection formula and Lefschetz hyperplane theorem that the pull-back operation gives an injection $f^*: \textup{NS}_\mathbb{R}(Y)\rightarrow \textup{NS}_\mathbb{R}(X)$ on the  N\'eron-Severi groups of $X$ and $Y$. However, in the analytic case when $X$ and $Y$ are possibly non-projective, there is no ample divisor and then Lefschetz hyperplane theorem may not be true. Nevertheless, we give the lemma below by requiring mild singularities.

\begin{lem}\label{lem2.9injection}
Suppose $f:X\rightarrow Y$ is a surjective holomorphic map between normal compact K\"ahler spaces with at worst rational singularities. 
Then $f^*:\textup{H}^{1,1}_{\textup{BC}}(Y)\rightarrow \textup{H}^{1,1}_{\textup{BC}}(X)$ is an injection.
\end{lem}

\begin{proof}
We shall apply \textbf{Main Reduction}  to prove our lemma. 
Take a resolution $h:\widetilde{Y}\rightarrow Y$ and let $\widetilde{X}$ be the resolution of an irreducible component of the product $X\times_Y\widetilde{Y}$ dominating $X$ with $\widetilde{f}:\widetilde{X}\rightarrow \widetilde{Y}$ and $g:\widetilde{X}\rightarrow X$. 
Since $\widetilde{f}$ is a surjective morphism between compact K\"ahler manifolds, the induced linear operation $\widetilde{f}^*:\textup{H}_{\textup{BC}}^{1,1}(\widetilde{Y})\rightarrow \textup{H}_{\textup{BC}}^{1,1}(\widetilde{X})$ is an injection (cf.~ \cite[Lemma 7.28]{voisin2007hodge} and Remark \ref{remark2.6}).

Since both $g$ and $h$ are resolutions, $\textup{H}_{\textup{BC}}^{1,1}(Y)$ (resp.~$\textup{H}_{\textup{BC}}^{1,1}(X)$) embeds into $\textup{H}_{\textup{BC}}^{1,1}(\widetilde{Y})$ (resp.~$\textup{H}_{\textup{BC}}^{1,1}(\widetilde{X})$); see ~\cite[Lemma 3.3]{horing2016minimal}. By the commutative diagram, the induced linear operation $f^*|_{\textup{H}_{\textup{BC}}^{1,1}(Y)}$ is an injection.
\end{proof}

In particular, we get the following proposition by setting  $X=Y$ in Lemma \ref{lem2.9injection}.

\begin{pro}\label{lem6.1isomorphism}
Let $X$ be a normal compact K\"ahler space with at worst rational singularities and $f:X\rightarrow X$ a surjective endomorphism. Then, $f$ induces an isomorphism of the Bott-Chern cohomology space  $\textup{H}_{\textup{BC}}^{1,1}(X)$.
\end{pro}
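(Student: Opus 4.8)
The plan is to deduce this immediately from Lemma \ref{lem2.9injection}(2) together with the finite-dimensionality of the Bott--Chern cohomology space. First I would record that, since $X$ is a normal compact K\"ahler space with at worst rational singularities, it lies in the Fujiki class $\mathcal{C}$, so by the Leray spectral sequence argument recalled before Remark \ref{remark2.6} there is an injection $\textup{H}^{1,1}_{\textup{BC}}(X)\hookrightarrow H^2(X,\mathbb{R})$. Since $H^2(X,\mathbb{R})$ is finite dimensional, $V:=\textup{H}^{1,1}_{\textup{BC}}(X)$ is a finite dimensional real vector space.

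Next I would apply Lemma \ref{lem2.9injection}(2) to the surjective endomorphism $f:X\rightarrow X$: the induced linear operator $\varphi:=f^*|_{\textup{H}^{1,1}_{\textup{BC}}(X)}:V\rightarrow V$ is injective. A linear map from a finite dimensional vector space to itself is injective if and only if it is surjective, hence bijective; therefore $\varphi$ is an isomorphism of $V$, which is exactly the assertion.

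There is essentially no serious obstacle here beyond invoking the correct two inputs in the right order: the rational-singularity hypothesis is used only to guarantee both that Lemma \ref{lem2.9injection} applies (it requires rational singularities) and that $\textup{H}^{1,1}_{\textup{BC}}(X)$ is finite dimensional via the embedding into $H^2(X,\mathbb{R})$; without finite dimensionality an injective operator need not be surjective. If one wished to avoid citing Lemma \ref{lem2.9injection} directly, one could instead run the same argument through \textbf{Reduction A}: resolve to $\tilde f:\tilde X\rightarrow\tilde X'$ between manifolds, use the injectivity of pull-back for surjective morphisms of compact K\"ahler manifolds together with the injections $\textup{H}^{1,1}_{\textup{BC}}(X)\hookrightarrow \textup{H}^{1,1}_{\textup{BC}}(\tilde X)$, and then conclude by finite dimensionality; but the cleaner route is the two-line deduction above.
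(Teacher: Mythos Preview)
Your proof is correct and is exactly the paper's argument: the paper simply observes that setting $X=Y$ in Lemma \ref{lem2.9injection} gives injectivity of $f^*$ on $\textup{H}^{1,1}_{\textup{BC}}(X)$, and then notes that this is a linear map of finite-dimensional vector spaces, hence an isomorphism. You have spelled out in more detail why the space is finite dimensional (via the embedding into $H^2(X,\mathbb{R})$), but the route is identical.
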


At the end of this subsection, we state the fact of spanning cones below. 
Indeed, it follows immediately from the openness of K\"ahler cone.
\begin{lem}
Suppose $X$ is a normal compact K\"ahler space. Then, the pseudo-effective cone defined in Definition \ref{defi2.11} spans the Bott-Chern cohomology space.
\end{lem}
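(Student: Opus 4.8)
The plan is to show that every class in $\textup{H}^{1,1}_{\textup{BC}}(X)$ can be written as a difference of two pseudo-effective classes; since $\mathcal{E}(X)$ is a convex cone (as recorded just after Definition \ref{defi2.11}), this is precisely the assertion that $\mathcal{E}(X)$ spans the Bott-Chern cohomology space. Fix once and for all a K\"ahler form $\omega$ on $X$, so that $[\omega]\in\mathcal{K}(X)\subseteq\mathcal{E}(X)$.

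Let $[\alpha]\in\textup{H}^{1,1}_{\textup{BC}}(X)$ be arbitrary. By the description of the Bott-Chern cohomology recalled above (cf.~\cite[Remark 3.2]{horing2016minimal}), $[\alpha]$ admits a representative $\alpha$ that is a closed $(1,1)$-form with local potentials; in particular $\alpha$ is smooth on $X_{\textup{reg}}$ and, near any point $x\in X$ with a local embedding $j_U:U\hookrightarrow\Omega\subseteq\mathbb{C}^N$, one has $\alpha|_{U\cap X_{\textup{reg}}}=j_U^*\widetilde\alpha|_{U\cap X_{\textup{reg}}}$ for a smooth form $\widetilde\alpha$ on $\Omega$, while $\omega|_{U\cap X_{\textup{reg}}}=j_U^*(dd^cf)|_{U\cap X_{\textup{reg}}}$ for a smooth strictly psh function $f$ on $\Omega$. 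Since $\widetilde\alpha$ has bounded coefficients and $dd^cf$ is strictly positive definite on a relatively compact neighbourhood of $j_U(x)$, there is a constant $C_x>0$ with $\widetilde\alpha+C_x\,dd^cf\ge 0$ there; covering the compact space $X$ by finitely many such charts yields a single constant $C>0$ with $\alpha+C\omega\ge 0$ on $X_{\textup{reg}}$. Hence $\alpha+C\omega$ is a positive closed $(1,1)$-current and $[\alpha]+C[\omega]\in\mathcal{E}(X)$.

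Consequently $[\alpha]=\big([\alpha]+C[\omega]\big)-C[\omega]$ exhibits $[\alpha]$ as a difference of two pseudo-effective classes, which proves the lemma. The only point requiring care is the existence of the uniform constant $C$ near the singular locus: this is handled by passing, via \textbf{Reduction B}, to the ambient local embeddings, where both $\alpha$ and $\omega$ become restrictions of genuine smooth forms on open subsets of $\mathbb{C}^N$ and the strict positivity of $dd^cf$ dominates $\widetilde\alpha$ after a uniform rescaling; this makes precise the heuristic that every closed $(1,1)$-current with local potentials is generated by positive currents.
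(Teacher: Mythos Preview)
Your proof is correct and is exactly the argument the paper has in mind: the paper's one-line justification ``every current is generated by positive currents'' is precisely the statement that any class can be written as a difference of pseudo-effective classes, and you have carried this out explicitly by showing $[\alpha]=([\alpha]+C[\omega])-C[\omega]$ with $[\alpha]+C[\omega]\in\mathcal{E}(X)$ for $C\gg 0$. Your use of \textbf{Reduction~B} to obtain the uniform constant $C$ via compactness and strict positivity of the ambient $dd^cf$ is the standard way to make this rigorous in the singular setting, so your proof is a detailed expansion of the paper's proof rather than a different approach.
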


\subsection{Polarized and int-amplified endomorphisms}
In this short subsection, we come to define the key notion in this article. 

First, we prove the next lemma which asserts that for each holomorphic self-map of a compact K\"ahler space with mild singularities, the surjectivity will imply the finiteness. 

\begin{lem}\label{finiteness of endomorphism}
Let $f:X\rightarrow X$ be a surjective holomorphic map of a normal compact K\"ahler space $X$. 
Suppose that $X$ has at worst rational singularities.
Then $f$ is finite.
\end{lem}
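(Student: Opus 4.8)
The plan is to reduce the statement to a numerical criterion involving the action of $f$ on some cohomological or homological cone, and then derive a contradiction from positive-dimensional fibres. First I would suppose, for contradiction, that $f$ is not finite; since $f$ is surjective and proper (being a morphism of compact spaces), the only way it can fail to be finite is that some fibre $F = f^{-1}(y)$ has a component of dimension $\ge 1$. The key observation is that a surjective endomorphism $f$ of a compact K\"ahler space of dimension $n$ can be iterated, and one should track how $f^*$ acts on $\textup{H}_{\textup{BC}}^{1,1}(X)$ together with the degree. By Proposition~\ref{lem6.1isomorphism}, $f^*$ is an isomorphism of the finite-dimensional space $\textup{H}_{\textup{BC}}^{1,1}(X)$; fix a K\"ahler class $[\omega]$ and consider the self-intersection numbers $([\omega]^n)$ versus $((f^*[\omega])^n)$. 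The topological degree of $f$ (the generic cardinality of a fibre) should equal $((f^*[\omega])^n)/([\omega]^n)$ when $f$ is generically finite, and this is where positive-dimensional fibres create a tension.

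The cleaner route, which I would pursue first, is the following. Because $f\colon X\to X$ is surjective and $X$ is irreducible of dimension $n$, $f$ is automatically generically finite (a surjective morphism between irreducible spaces of the same dimension), so there is a proper analytic subset over which fibres jump in dimension; call its preimage the exceptional locus $E$. If $f$ were not finite, $E\ne\emptyset$ and $f(E)\subsetneq X$. Now iterate: $f^{-k}(f^k(E)) \supseteq E$, and one wants to show the ``exceptional'' behaviour propagates in a way incompatible with surjectivity — concretely, that $\dim f^k(E)$ cannot stay constant, or that some curve $C$ contracted by $f$ must have its class $[C]\in \overline{\textup{NE}}(X)$ satisfy $f_*[C]=0$ by Remark~\ref{pushforward subvariety}, while on the other hand $f^*$ acting on the dual side (Bott–Chern classes) expands every K\"ahler class, forcing $([\omega]\cdot C) = ([\omega]\cdot f_*[C]') \cdot(\text{something})$ to violate positivity. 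The contraction of a curve gives a class $[C]$ with $([\omega]\cdot C)>0$ but, using the projection formula (Proposition~\ref{projection formula}) and $f_*[C]=0$, one gets $([f^*\omega]\cdot C) = ([\omega]\cdot f_*[C]) = 0$, contradicting that $f^*[\omega]$ is again a K\"ahler class (pull-back of K\"ahler under a finite... but $f$ is only assumed surjective here, so one must be careful) — hence instead use that $f^*[\omega]$ is at least nef and big, and a big nef class on a compact K\"ahler space has positive intersection with every curve, or argue via $[\omega] = \deg(f)^{-1} f_*f^*[\omega]$.

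Concretely the steps I would carry out, in order: (i) record that surjectivity plus $\dim X = \dim X$ gives generic finiteness, so "not finite" means "$\exists$ a positive-dimensional fibre component, equivalently $\exists$ an irreducible curve $C$ with $f(C)$ a point"; (ii) by Remark~\ref{pushforward subvariety}, $f_*[C] = 0$; (iii) pick any K\"ahler form $\omega$ on $X$ (exists since $X$ is K\"ahler) and compute $0 < \int_C \omega$, while projection formula gives $\int_C f^*(\text{anything})$ controlled by $f_*[C] = 0$; (iv) produce the contradiction by expressing $[\omega]$ in terms of $f^*$ of a class — e.g. since $f^*$ is an isomorphism on $\textup{H}_{\textup{BC}}^{1,1}(X)$, write $[\omega] = f^*[\beta]$ for a unique class $[\beta]$, so $\int_C \omega = \int_C f^*\beta = \langle f_*[C], [\beta]\rangle = 0$, the desired contradiction. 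The main obstacle I anticipate is step (iv): ensuring the intersection pairing between $\textup{N}_1(X)$ (where $[C]$ lives) and $\textup{H}_{\textup{BC}}^{1,1}(X)$ (where $[\beta]$ lives) is well-defined and that the projection-formula identity $\langle f_*[C],[\beta]\rangle = \langle [C], f^*[\beta]\rangle$ holds in the singular setting — this needs $X$ to have rational singularities (or at least needs the Bott–Chern class to be representable by a $(1,1)$-form whose pull-back is controlled), which the lemma does not assume; so the honest proof may instead go through Reduction~A, lifting $f$ to $\tilde f$ on a resolution $\tilde X$, and there invoke the smooth statement that a surjective endomorphism of a compact K\"ahler manifold is finite, then descend. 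Reconciling the possible non-rationality of the singularities of $X$ with the need for a well-behaved intersection pairing is the step that will require the most care.
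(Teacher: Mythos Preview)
Your overall strategy matches the paper's: find a curve $C$ contracted by $f$, note $f_*[C]=0$, and contradict $\int_C\omega>0$ via the projection formula and invertibility of $f^*$ on $\textup{H}_{\textup{BC}}^{1,1}(X)$. But step~(i) hides a genuine gap. In the analytic category it is \emph{not} automatic that a positive-dimensional fibre component contains a curve: a very general compact complex torus of dimension $\ge 2$ is K\"ahler yet has no proper positive-dimensional analytic subvarieties at all, so ``$\exists$ a positive-dimensional fibre component'' and ``$\exists$ an irreducible curve $C$ with $f(C)$ a point'' are not equivalent without further work. The paper closes this gap by taking the Stein factorization $X\to Y\to X$ and applying Hironaka's Chow lemma to the modification $X\to Y$; this exhibits $f$ as dominated by a sequence of blow-ups followed by a finite map, hence as a \emph{projective} morphism, and therefore every positive-dimensional fibre is covered by curves.

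Your concern in step~(iv) about needing rational singularities to invoke Proposition~\ref{lem6.1isomorphism} is legitimate; the paper's argument at the analogous point (deducing $[C]=0$ in $\textup{N}_1(X)$ from $f_*[C]=0$ via the duality of \cite[Proposition~3.9]{horing2016minimal}) uses the same surjectivity of $f^*$. Note however that your proposed escape route---pass to a resolution $\tilde X$ and invoke the smooth case---does not work as stated: Reduction~A produces a commutative square with two \emph{different} smooth models on the top row, not an endomorphism $\tilde f:\tilde X\to\tilde X$ of a single resolution, so there is nothing to descend.
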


\begin{proof}
Since $X$ is compact, any holomorphic self-map of $X$ is proper. Besides, $f$ is a surjective self-map and thus generically finite. By taking the Stein factorization $X\rightarrow Y\rightarrow X$ and applying Hironaka's Chow lemma (cf.~\cite[Theorem 7.8]{ancona2006differential}) for the modification $X\rightarrow Y$,
we see that $f$ is dominated by a composition of finite sequences of blow-ups followed by a finite morphism $Y\rightarrow X$, which is projective. Therefore, every positive dimensional fibre of $f$ will be covered by curves. 

Suppose $C\subseteq X$ is a curve lying in the fibre of $f$. 
We fix a K\"ahler class $[\omega]$ on $X$.
On the one hand, since $f$ is proper and surjective, it follows from  Proposition \ref{lembignef} (2) that $f^*\textup{Nef}(X)\subseteq\textup{Nef}(X)$.
On the other hand, since $X$ has at worst rational singularities, by Proposition \ref{lem6.1isomorphism}, $f^*|_{\textup{H}_{\textup{BC}}^{1,1}(X)}$ is invertible; hence it maps interior part of $\textup{Nef}(X)$ to the interior part of $\textup{Nef}(X)$.
In particular, $f^*[\omega]$ is still a K\"ahler class.

Considering the composite morphism $j:\widetilde{C}\rightarrow C\hookrightarrow X$, where $\widetilde{C}$ is the normalization of the curve $C$,
we see that $j^*f^*[\omega]$ is also a K\"ahler class by Proposition \ref{lembignef} (3).
Therefore, $0<\int_{\widetilde{C}}j^*f^*\omega=\int_C f^*\omega=\left<[C],f^*\omega\right>$ by the numerical characterization of K\"ahler classes (cf.~\cite{Demailly2004numerical}). 
However, the right hand side coincides with $\left<f_*[C],\omega\right>$ by the projection formula and hence vanishes (cf.~Proposition \ref{projection formula}
 and Remark \ref{pushforward subvariety}), a contradiction.
\end{proof}

\begin{defi}\label{defipolarized}
Let $f:X\rightarrow X$ be a  surjective endomorphism of a normal compact K\"ahler space. We say that $f$ is
\begin{enumerate}
\item polarized, if there exists a K\"ahler class $[\alpha]\in \textup{H}_{\textup{BC}}^{1,1}(X)$ such that $f^*[\alpha]=q[\alpha]$ for some integer $q>1$;
\item amplified, if there exist a class $[\xi]\in \textup{H}_{\textup{BC}}^{1,1}(X)$ and a K\"ahler class $[\eta]\in \textup{H}_{\textup{BC}}^{1,1}(X)$ such that $f^*[\xi]-[\xi]=[\eta]$;
\item int-amplified, if there exist K\"ahler classes $[\xi]$ and $[\eta]$ such that $f^*[\xi]-[\xi]=[\eta]$.
\end{enumerate}
\end{defi}

\begin{remark}\label{reminitial}
\textup{We shall prove in Section \ref{section3} that, an int-amplified endomorphism in the sense of Definition \ref{defipolarized} coincides with the definition in \cite{meng2017building}, when $X$ is projective.}
\end{remark}

\section{Properties of int-amplified endomorphisms: Proof of Theorem \ref{thm1.1}\,$\sim$\,\ref{thm1.3}}\label{section3}
In this section, we study the basic properties of int-amplified endomorphisms.  Most of the following are generalizations of \cite[Section 3]{meng2017building} for the case when $X$ is projective and the proofs are similar except we apply new results developed in Section \ref{section2}.

To begin with this part, we prove Theorem \ref{thm1.1} by applying the results in \cite{meng2017building}.
As reminded by referees, we require $X$ to have at worst rational singularities to make sure the induced linear operation $f^*|_{\textup{H}_{\textup{BC}}^{1,1}(X)}$ is invertible (cf.~Proposition \ref{lem6.1isomorphism}).

\begin{proof}[Proof of Theorem \ref{thm1.1}]
It is clear that $(4)\Rightarrow (1)\Rightarrow (3)$ by letting $C=\mathcal{K}(X)$. Suppose condition $(3)$. Then, applying \cite[Lemma 3.1]{meng2017building} with $V=\textup{H}_{\textup{BC}}^{1,1}(X)$ and $C=\mathcal{E}(X)$, we get $(2)$. Finally, $(2)$ implies $(4)$ is exactly \cite[Proposition 3.2]{meng2017building}.

Next, we show the equivalence of two kinds of definitions when $X$ is projective. Indeed, it is obvious that if $f$ is  int-amplified in the sense of ample divisors, then $f$ is int-amplified in the sense of K\"ahler classes. For the converse direction, suppose $X$ is projective and $f$ is int-amplified in the sense of K\"ahler classes. Then by $(2)$,  each eigenvalue of the invertible linear operation $f^*|_{\textup{H}_{\textup{BC}}^{1,1}(X)}$ has modulus greater than $1$. Since the N\'eron Severi space $\text{NS}_{\mathbb{R}}(X)$ is a subspace of $\textup{H}_{\textup{BC}}^{1,1}(X)$, each eigenvalue of $f^*|_{\text{NS}_{\mathbb{R}}(X)}$ also has modulus greater than $1$. Hence, the converse direction holds by \cite[Theorem 1.1]{meng2017building}.
%So, we complete the proof of Theorem \ref{thm1.1}.
\end{proof}

The next lemma shows that an endomorphism being int-amplified is equivalent to its power being int-amplified. The proof is easy by using Theorem \ref{thm1.1} and Proposition \ref{lembignef}.\setlength\parskip{0pt}
\begin{lem}\label{iterationlemma}
Let $f:X\rightarrow X$ be a surjective endomorphism of a normal compact K\"ahler space with at worst rational singularities. Then the following are equivalent.
\begin{enumerate}
\item[\textup{(1)}] $f$ is int-amplified.
\item[\textup{(2)}] $f^s$ is int-amplified for any positive integer $s$.
\item[\textup{(3)}] $f^s$ is int-amplified for some positive integer $s$.
\end{enumerate}
\end{lem}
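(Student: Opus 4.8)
\textbf{Proof proposal for Lemma \ref{iterationlemma}.}
The plan is to use the eigenvalue characterization from Theorem \ref{thm1.1}, part (2), which reduces the statement to a purely linear-algebraic fact about the operator $\varphi := f^*|_{\textup{H}_{\textup{BC}}^{1,1}(X)}$. Recall first that $(f^s)^* = (f^*)^s = \varphi^s$ on $\textup{H}_{\textup{BC}}^{1,1}(X)$, since pull-back of Bott-Chern classes is functorial (as noted after Definition \ref{defi2.11}). Thus $f$ is int-amplified iff every eigenvalue of $\varphi$ has modulus $>1$, and $f^s$ is int-amplified iff every eigenvalue of $\varphi^s$ has modulus $>1$. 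The key observation is that the eigenvalues of $\varphi^s$ are exactly the $s$-th powers of the eigenvalues of $\varphi$ (over $\mathbb{C}$, counted with multiplicity), so $|\lambda| > 1$ for all eigenvalues $\lambda$ of $\varphi$ if and only if $|\lambda^s| = |\lambda|^s > 1$ for all such $\lambda$, which holds iff all eigenvalues of $\varphi^s$ have modulus $>1$. This immediately gives the equivalence of all three conditions.

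Concretely I would argue as follows. $(2)\Rightarrow(3)$ is trivial (take $s=1$, or any $s$). For $(1)\Rightarrow(2)$: assume $f$ is int-amplified, so by Theorem \ref{thm1.1}(2) all eigenvalues $\lambda_1,\dots,\lambda_k$ of $\varphi$ satisfy $|\lambda_i|>1$; then for any positive integer $s$ the eigenvalues of $\varphi^s = (f^s)^*$ are $\lambda_1^s,\dots,\lambda_k^s$, all of modulus $|\lambda_i|^s > 1$, so by Theorem \ref{thm1.1}(2) applied to $f^s$, the endomorphism $f^s$ is int-amplified. For $(3)\Rightarrow(1)$: assume $f^s$ is int-amplified for some $s\geq 1$; by Theorem \ref{thm1.1}(2), all eigenvalues of $(f^s)^* = \varphi^s$ have modulus $>1$; these eigenvalues are the $s$-th powers $\lambda_i^s$ of the eigenvalues $\lambda_i$ of $\varphi$, so $|\lambda_i|^s > 1$, hence $|\lambda_i| > 1$ for each $i$; by Theorem \ref{thm1.1}(2) applied to $f$, we conclude $f$ is int-amplified.

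The only point requiring a word of care—and it is genuinely routine rather than a real obstacle—is the passage from eigenvalues of $\varphi$ to eigenvalues of $\varphi^s$. Since $\textup{H}_{\textup{BC}}^{1,1}(X)$ is a finite-dimensional real vector space (guaranteed by the embedding into $H^2(X,\mathbb{R})$ discussed before Remark \ref{remark2.6}, under the standing rational-singularities hypothesis), one complexifies and uses the Jordan (or just triangular) form of $\varphi$ over $\mathbb{C}$: if $\varphi$ is represented by an upper-triangular matrix with diagonal entries $\lambda_1,\dots,\lambda_k$, then $\varphi^s$ is upper-triangular with diagonal $\lambda_1^s,\dots,\lambda_k^s$, so the spectrum transforms as claimed with multiplicities. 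No analytic input beyond Theorem \ref{thm1.1} is needed; everything else is elementary linear algebra. I expect the entire proof to be three or four lines once this is set up.
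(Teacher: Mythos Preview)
Your proof is correct and, for the nontrivial implication $(3)\Rightarrow(1)$, matches the paper's argument exactly: both invoke Theorem \ref{thm1.1}(2) and observe that if some $\lambda$ had $|\lambda|\le 1$ then $\lambda^s$ would violate the hypothesis on $\varphi^s$. The only difference is in $(1)\Rightarrow(2)$: the paper does this directly from the definition---if $f^*\xi-\xi=\eta$ with $\xi,\eta$ K\"ahler, then $(f^s)^*\xi-\xi=\sum_{j=0}^{s-1}(f^j)^*\eta$, and each summand is K\"ahler by Proposition \ref{lembignef}(3) since $f$ is finite (Lemma \ref{finiteness of endomorphism})---whereas you route this implication through the eigenvalue criterion as well. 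Your uniform approach is arguably cleaner and avoids citing Proposition \ref{lembignef}, at the cost of invoking Theorem \ref{thm1.1} for a step that can be done by hand; both are entirely valid and equally short.
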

%\begin{proof}
%$(1)\Rightarrow (2)$ by repeated iteration and Proposition \ref{lembignef}. $(2)\Rightarrow(3)$ is trivial. Now, suppose $(3)$ holds.  According to Theorem \ref{thm1.1}, all the eigenvalues of $(f^s)^*|_{\textup{H}_{\textup{BC}}^{1,1}(X)}$ have modulus greater than $1$. Suppose there exists an eigenvalue $\lambda$ of $f^*|_{\textup{H}_{\textup{BC}}^{1,1}(X)}$ such that $|\lambda|\le 1$. Then, $\lambda^s$ is an eigenvalue of $(f^s)^*|_{\textup{H}_{\textup{BC}}^{1,1}(X)}$ having modulus no more than $1$, a contradiction. Thus, the second equivalent condition of Theorem \ref{thm1.1} implies $(1)$.
%\end{proof}

In the following, we shall apply Theorem \ref{thm1.1} to prove several important lemmas below. \begin{lem}\label{lem3.4}
Let $\pi:X\rightarrow Y$ be a surjective morphism of normal compact K\"ahler spaces with at worst rational singularities. Let $f:X\rightarrow X$ and $g:Y\rightarrow Y$ be two surjective endomorphisms such that $g\circ\pi=\pi\circ f$. If $f$ is int-amplified, then so is $g$.
\end{lem}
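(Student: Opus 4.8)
\textbf{Proof proposal for Lemma \ref{lem3.4}.}
The plan is to push the int-amplified hypothesis on $f$ down to $g$ via the pull-back map $\pi^*$ on Bott--Chern cohomology, using Theorem \ref{thm1.1} (the eigenvalue criterion) as the bridge between the geometric and the linear-algebraic formulations. Since $Y$ has at worst rational singularities and $\pi$ is surjective, Lemma \ref{lem2.9injection}(2) gives that $\pi^*:\textup{H}_{\textup{BC}}^{1,1}(Y)\rightarrow \textup{H}_{\textup{BC}}^{1,1}(X)$ is injective. First I would record the key intertwining identity: applying pull-backs to $g\circ\pi=\pi\circ f$ yields $\pi^*\circ g^*=f^*\circ\pi^*$ as linear operators on the relevant Bott--Chern spaces. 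In particular, $W:=\pi^*(\textup{H}_{\textup{BC}}^{1,1}(Y))$ is an $f^*$-invariant subspace of $\textup{H}_{\textup{BC}}^{1,1}(X)$, and under the isomorphism $\pi^*:\textup{H}_{\textup{BC}}^{1,1}(Y)\xrightarrow{\ \sim\ }W$ the operator $g^*|_{\textup{H}_{\textup{BC}}^{1,1}(Y)}$ is conjugate to $f^*|_{W}$.

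Next I would extract the spectral conclusion. Since $f$ is int-amplified, Theorem \ref{thm1.1}(2) says every eigenvalue of $\varphi:=f^*|_{\textup{H}_{\textup{BC}}^{1,1}(X)}$ has modulus $>1$. The restriction of a linear operator to an invariant subspace has spectrum contained in that of the operator, so every eigenvalue of $f^*|_W$ has modulus $>1$; by the conjugacy above, every eigenvalue of $g^*|_{\textup{H}_{\textup{BC}}^{1,1}(Y)}$ has modulus $>1$. It then remains only to check that $g$ is a surjective endomorphism of a normal compact K\"ahler space with the Bott--Chern formalism available, so that the equivalence $(2)\Rightarrow(1)$ of Theorem \ref{thm1.1} applies to $g$ and concludes that $g$ is int-amplified. (Surjectivity of $g$ follows from surjectivity of $\pi\circ f=g\circ\pi$; finiteness, if needed, is Lemma \ref{finiteness of endomorphism}.)

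I do not expect a serious obstacle here; the only point requiring a little care is that Theorem \ref{thm1.1} and Lemma \ref{lem2.9injection} are stated for spaces with mild (rational) singularities, so one should make sure the hypothesis ``at worst rational singularities'' on $X$ and $Y$ is in force — which it is by assumption — and that the intersection-theoretic/embedding facts used to identify $\textup{H}_{\textup{BC}}^{1,1}$ with a subspace of $H^2$ are thereby legitimate. The mild-singularities hypothesis is also what makes $\pi^*$ well-defined and injective. Alternatively, one could avoid the eigenvalue criterion entirely and argue directly with cones: choose K\"ahler classes $[\xi],[\eta]$ on $Y$ and transport the cone-theoretic statement Theorem \ref{thm1.1}(4) through $\pi^*$, but the eigenvalue route is cleaner since it immediately sidesteps the fact that $\pi^*$ of a K\"ahler class need not be K\"ahler (only nef/big) when $\pi$ is not finite.
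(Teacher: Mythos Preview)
Your proposal is correct and follows essentially the same route as the paper: use Lemma~\ref{lem2.9injection}(2) to get injectivity of $\pi^*$, the intertwining $\pi^*g^*=f^*\pi^*$ to identify $g^*$ with $f^*$ restricted to an invariant subspace, and then Theorem~\ref{thm1.1}(2) both to extract the eigenvalue condition from $f$ and to conclude int-amplifiedness for $g$. The paper's proof is just a terser version of exactly this argument; your added remarks on surjectivity of $g$ and the role of the rational-singularities hypothesis are accurate elaborations (note, though, that Theorem~\ref{thm1.1} itself is stated for arbitrary normal compact K\"ahler spaces, so the rational-singularities assumption is really only needed for Lemma~\ref{lem2.9injection}).
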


\begin{proof} 
It follows from Lemma \ref{lem2.9injection} that $\pi^*: \textup{H}_{\textup{BC}}^{1,1}(Y)\rightarrow \textup{H}_{\textup{BC}}^{1,1}(X)$ is injective. Then, each eigenvalue of $g^*|_{\textup{H}^{1,1}_{\textup{BC}}(Y)}$ is an eigenvalue of  $f^*|_{\textup{H}^{1,1}_{\textup{BC}}(X)}$ and thus has modulus greater than one, since $f$ is int-amplified. Therefore, our lemma follows from Theorem \ref{thm1.1}.
\end{proof}

\begin{lem}\label{lem3.333}
Let $\pi:X\rightarrow Y$  be a generically finite surjective morphism between normal compact K\"ahler spaces with at worst rational singularities. Let $f:X\rightarrow X$ and $g:Y\rightarrow Y$  be surjective endomorphisms such that $g\circ\pi=\pi\circ f$. If $g$ is int-amplified, then so is $f$.
\end{lem}
\begin{proof}
Suppose $g$ is int-amplified. Then, there exist K\"ahler classes $[\xi]$ and $[\eta]$ on $Y$ such that $g^*[\xi]=[\eta]+[\xi]$. Note that the K\"ahler cone $\mathcal{K}(Y)$ is contained in the big cone $\mathcal{E}^0(Y)$ and big classes pull back to big classes under any generically finite morphism by Proposition \ref{lembignef}. Then, $\pi^*[\xi]$ and $\pi^*[\eta]$ are big classes on $X$ and  the commutative diagram implies $\pi^*[\eta]=f^*\pi^*[\xi]-\pi^*[\xi]$. Hence, our lemma follows from Theorem \ref{thm1.1}.
\end{proof}

Next, we follow the idea of  \cite{meng2017building} to prove Theorems \ref{thm1.2} and  \ref{thm1.3} below in our present setting. Both of them are consequences of Theorem \ref{thm1.1}.

\begin{proof}[Proof of Theorem \ref{thm1.2}]
We fix a norm on the Bott-Chern cohomology space $V:=\textup{H}_{\textup{BC}}^{1,1}(X)$. Let $\phi_f:=f^*|_V$ and $\phi_g:=g^*|_V$. Since $f$ is int-amplified, by Theorem \ref{thm1.1}, all the eigenvalues of $\phi_f$ are of modulus greater than $1$. Recall that the norm of a linear operator is given by
$||\phi_f||:=\max\limits_{v\in V}\frac{||\phi_f(v)||}{||v||}$, 
and the right hand side is no less than the spectral radius of $\phi_f$. Also, by Gelfand's formula, $\lim\limits_{i\rightarrow\infty}||\varphi_f^{-i}||^\frac{1}{i}=\rho(\varphi_f^{-1})<1$, where $\rho(\varphi_f^{-1})$ is the spectral radius of $\varphi_f^{-1}$. 
Then,  there exists $i_0$ such that for all $i\ge i_0$,
$||\phi_f^{-i}\phi_g^{-1}||\le||\phi_f^{-i}||\cdot||\phi_g^{-1}||<1$.
 So, for each $v\in V$, $||\phi_g\phi_f^i(v)||>||v||$. Let $h=f^i\circ g$ with $i\ge i_0$ and $\phi_h=h^*|_V$. 

We claim that  all the eigenvalues of $\phi_h$ are of modulus greater than $1$. Indeed, let $r$ be the smallest absolute value of eigenvalues of $\phi_h$. Since $X$ is K\"ahler, $V$ has nonvoid interior. So, it follows from the Perron-Frobenius theorem that $V$ contains an eigenvector $v$ whose eigenvalue is the spectral radius of $\phi_h^{-1}$, i.e., $||\phi_h^{-1}(v)||=\frac{1}{r}||v||$. Therefore, with $v$ replaced by $\phi_h(v)$, we have $r>1$ by the inequality:
$r||v||=||\phi_h(v)||=||\phi_g(\phi_f^i(v))||>||v||$.
Hence $h$ is int-amplified. Similar argument works for $g\circ f^i$. So, Theorem \ref{thm1.2} holds.	
\end{proof}

\begin{proof}[Proof of Theorem \ref{thm1.3}.]
By the ramification divisor formula, $f^*(-K_X)-(-K_X)=R_{\phi}$, where  the ramification divisor $R_{\phi}$ is effective and thus pseudo-effective in the sense of currents. By Theorem \ref{thm1.1} (4), $-K_X$ is also pseudo-effective.

Suppose the contrary that $\kappa(X)>0$. Then, there exists a non-zero effective  divisor $D$ on $X$, such that  $sK_X\sim D$ for some  $s\in\mathbb{N}^+$. Note that $D$ is not contained in the singular locus due to the normality of $X$. Therefore, for any K\"ahler form $\omega$ on $X$, 
$$sK_X\wedge\omega^{n-1}=[D]\wedge\omega^{n-1}=\int_{D_{\text{reg}}}(\omega|_D)^{n-1}>0,$$ 
a contradiction to $-K_X$ being pseudo-effective. Then, Theorem \ref{thm1.3} holds.
\end{proof}

Let $f:X\rightarrow X$ be a  surjective endomorphism of a normal compact K\"ahler space $X$ of dimension $n$ with at worst rational singularities (cf.~\cite[Remark 3.7]{horing2016minimal}). Denote by \setlength\parskip{0pt}
$$\textup{L}^k(X):=\{\sum [\alpha_1]\cup\cdots\cup[\alpha_k]~|~[\alpha_i]\in \textup{H}_{\text{BC}}^{1,1}(X)\},$$
the subspace of $\textup{H}^{2k}(X,\mathbb{R})$. In particular, $\textup{L}^1(X)=\textup{H}_{\text{BC}}^{1,1}(X)$. Let $\textup{N}^k(X):=\textup{L}^k(X)/\equiv_w$, where  $[\alpha]\in \textup{L}^k(X)$ is weakly numerically equivalent (denoted by $\equiv_w$) to zero if and only if for any $[\beta_{k+1}],\cdots,[\beta_n]\in \textup{H}_{\text{BC}}^{1,1}(X)$, $[\alpha]\cup[\beta_{k+1}]\cup\cdots\cup [\beta_n]=0$. Moreover, for any $[\alpha],[\beta]\in \textup{L}^k(X)$, $[\alpha]\equiv_w [\beta]$ if and only if $[\alpha]-[\beta]\equiv_w 0$.

Let  $\textup{L}_{\mathbb{C}}^k(X)=\textup{L}^k(X)\otimes_{\mathbb{R}}\mathbb{C}$ and $\textup{N}_{\mathbb{C}}^k(X)=\textup{N}^k(X)\otimes_{\mathbb{R}}\mathbb{C}$.  Note that the pull-back operation $f^*$ on Bott-Chern cohomology space induces a linear operation on  the subspace $\textup{L}^k_{\mathbb{C}}(X)\subseteq H^{2k}(X,\mathbb{C})$ for each $k$. With the notation settled, it follows from Proposition \ref{lem6.1isomorphism} that $f^*$ also gives a well-defined linear operation on the quotient space $\textup{N}_{\mathbb{C}}^k(X)$. 

We generalize \cite[Lemma 3.6]{meng2017building} to the following. We skip its proof and readers may refer to \cite[Lemma 3.6]{meng2017building} by applying Theorem \ref{thm1.1} and Proposition \ref{lem6.1isomorphism}. For its application, see Theorem \ref{thmdegree1} and Proposition \ref{torus2}.

\begin{lem}\label{degreelem}
Let $f:X\rightarrow X$ be an int-amplified endomorphism of a normal compact K\"ahler space of dimension $n$. Suppose that $X$ has at worst rational singularities. Then, for each $0<k<n$,  all the eigenvalues of $f^*|_{\textup{N}^k_{\mathbb{C}}(X)}$ are of modulus less than $\deg f$. In particular, $\lim\limits_{i\rightarrow+\infty}\frac{(f^i)^*[x]}{(\deg f)^i}\equiv_w 0$ for any $[x]\in \textup{L}^k_{\mathbb{C}}(X)$. 
\end{lem}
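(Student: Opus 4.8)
The plan is to mimic the argument of \cite[Lemma 3.6]{meng2017building}, replacing the use of ample classes by K\"ahler classes and using the spanning property of the pseudo-effective cone (Lemma after Proposition \ref{lem6.1isomorphism}) together with the description of $\textup{N}^k_{\mathbb{C}}(X)$ via the intersection pairing with $n-k$ Bott-Chern classes. First I would recall from Theorem \ref{thm1.3} that $\deg f>1$ (since $f$ int-amplified forces all eigenvalues of $\varphi=f^*|_{\textup{H}^{1,1}_{\textup{BC}}(X)}$ to have modulus $>1$, and $\deg f$ equals the product-type top self-intersection quantity $(\varphi\xi)^n/\xi^n$ for a K\"ahler class $\xi$, which exceeds $1$); more importantly, I would use that for a K\"ahler class $\xi$ one has $f^*\xi^n = (\deg f)\,\xi^n$ in $\textup{N}^n(X)\cong\mathbb{R}$, i.e. the top-degree operator $f^*|_{\textup{N}^n_{\mathbb{C}}(X)}$ is multiplication by $\deg f$ (this is the projection formula, Proposition \ref{projection formula}, applied with $m=n$).

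The core step is a pairing argument. Fix $0<k<n$ and let $\lambda$ be an eigenvalue of $f^*|_{\textup{N}^k_{\mathbb{C}}(X)}$ with eigenvector $[x]\in\textup{N}^k_{\mathbb{C}}(X)$, so $f^*[x]\equiv_w \lambda[x]$. Since $[x]\not\equiv_w 0$, there are Bott-Chern classes $[\beta_{k+1}],\dots,[\beta_n]$ with $[x]\cup[\beta_{k+1}]\cup\cdots\cup[\beta_n]\neq 0$; because the pseudo-effective cone spans $\textup{H}^{1,1}_{\textup{BC}}(X)$ and K\"ahler classes form an open cone, after writing each $[\beta_j]$ as a difference of K\"ahler classes and expanding multilinearly, I may assume each $[\beta_j]$ is K\"ahler. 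Now I would apply $f^*$ and use Proposition \ref{projection formula} (multiplicativity of $f^*$ on the cup product) to get
\[
(\deg f)\bigl([x]\cup[\beta_{k+1}]\cup\cdots\cup[\beta_n]\bigr)
= f_*f^*\bigl([x]\cup\textstyle\prod_j[\beta_j]\bigr)
= f_*\bigl(\lambda[x]\cup f^*[\beta_{k+1}]\cup\cdots\cup f^*[\beta_n]\bigr).
\]
Since $f$ is int-amplified, each K\"ahler class $[\beta_j]$ can, after replacing $f$ by a power if needed (harmless by Lemma \ref{iterationlemma}), be compared with its pull-back; more directly, one bounds $f_*([x]\cup\prod_j f^*[\beta_j])$ against $[x]\cup\prod_j[\beta_j]$ using that $f^*[\beta_j]-c[\beta_j]$ is represented by a K\"ahler current for a suitable $c$ related to the eigenvalues of $\varphi$, hence its cup products with K\"ahler classes are strictly positive. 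Running the estimate carefully — exactly as in \cite[Lemma 3.6]{meng2017building}, where positivity of intersection numbers of K\"ahler/ample classes is the engine — yields $|\lambda|<\deg f$.

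The final "in particular" is then immediate: writing $[x]\in\textup{L}^k_{\mathbb{C}}(X)$ in terms of generalized eigenvectors of $f^*|_{\textup{N}^k_{\mathbb{C}}(X)}$, each Jordan block contributes a term of size $O(i^{d}\,|\lambda|^i)$ with $|\lambda|<\deg f$, so $(\deg f)^{-i}(f^i)^*[x]\to 0$ in $\textup{N}^k_{\mathbb{C}}(X)$, i.e. $\equiv_w 0$ in the limit. The main obstacle I anticipate is the positivity bookkeeping in the pairing step: in the projective case one expands effective cycles as sums of intersections of ample divisors, which is unavailable here, so I must instead lean on the openness of $\mathcal{K}(X)$ and the numerical characterization of K\"ahler currents (Demailly--P\u{a}un, via \cite{Demailly2004numerical}) to guarantee that the relevant cup products of K\"ahler classes with $[x]$ stay bounded away from $0$ and controlled from above — this is precisely the kind of substitution flagged in the paper's "Differences with early papers" remark, and getting the inequalities to go the right way without ample divisors is the delicate point.
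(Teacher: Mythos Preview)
Your final paragraph (deducing the ``in particular'' from the eigenvalue bound via Jordan blocks or Gelfand's formula) is correct and matches the paper's argument.

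The core eigenvalue bound, however, has a genuine gap. After your displayed identity you need
\[
\bigl|[x]\cup f^*[\beta_{k+1}]\cup\cdots\cup f^*[\beta_n]\bigr| \;>\; \bigl|[x]\cup[\beta_{k+1}]\cup\cdots\cup[\beta_n]\bigr|,
\]
and you propose to get this from the fact that $f^*[\beta_j]-[\beta_j]$ (or $f^*[\beta_j]-c[\beta_j]$) is a K\"ahler current. But the eigenvector $[x]\in\textup{N}^k_{\mathbb{C}}(X)$ is \emph{complex}: the linear functional $[\gamma]\mapsto[x]\cup[\gamma]$ on $(n-k,n-k)$-classes takes values in $\mathbb{C}$, not in $\mathbb{R}_{\ge 0}$, so the positivity of $(f^*\xi)^{n-k}-\xi^{n-k}$ gives no inequality between the moduli of $[x]\cup(f^*\xi)^{n-k}$ and $[x]\cup\xi^{n-k}$. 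Concretely, with all $[\beta_j]=\xi$ and $f^*\xi=\xi+\eta$, the expansion $[x]\cup(\xi+\eta)^{n-k}=[x]\cup\xi^{n-k}+\sum_{i<n-k}\binom{n-k}{i}[x]\cup\xi^i\eta^{n-k-i}$ is a sum of complex numbers with no phase control; cancellation can make the total smaller. (A Perron--Frobenius detour would at best handle the spectral radius, and even then you would need a salient $f^*$-invariant cone in $\textup{N}^k$ with nonempty interior and a strict positivity statement for pairings, none of which you have set up.) This is also not how \cite[Lemma~3.6]{meng2017building} proceeds.

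The paper avoids positivity at level $k$ entirely by a descending induction on $k$. Given an eigenvector $[\alpha]\in\textup{N}^k_{\mathbb{C}}(X)$ with eigenvalue $l$, one shows that the annihilator $V_\alpha=\{[\beta]\in\textup{L}^1_{\mathbb{C}}(X):[\alpha]\cup[\beta]\equiv_w 0\}$ is a proper $f^*$-invariant subspace, picks an eigenvector $\overline{[\gamma]}$ of the induced map on $\textup{L}^1_{\mathbb{C}}(X)/V_\alpha$ with eigenvalue $\lambda$, and checks that $[\alpha]\cup[\gamma]$ is a nonzero eigenvector in $\textup{N}^{k+1}_{\mathbb{C}}(X)$ with eigenvalue $l\lambda$. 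Since $\lambda$ is an eigenvalue of $f^*|_{\textup{H}^{1,1}_{\textup{BC}}(X)}$, Theorem~\ref{thm1.1} gives $|\lambda|>1$; the base case $k=n-1$ gives $l\lambda=\deg f$, and the inductive step gives $|l\lambda|<\deg f$, so in either case $|l|<\deg f$. The only input beyond linear algebra is the level-$1$ eigenvalue bound from Theorem~\ref{thm1.1}; no positivity on higher-degree classes is needed.
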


\begin{thm}\label{thmdegree1}
Let $f:X\rightarrow X$ be an int-amplified endomorphism of a positive dimensional compact K\"ahler space with at worst rational singularities. Then $\deg f>1$.
\end{thm}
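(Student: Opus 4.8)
The plan is to argue by contradiction. Suppose $\deg f = 1$; since $f$ is a surjective endomorphism of a compact K\"ahler space, Lemma \ref{finiteness of endomorphism} tells us $f$ is finite, so $\deg f = 1$ forces $f$ to be a biholomorphism. The strategy is then to extract a numerical contradiction from the interplay between the top-degree intersection theory (which only sees $\deg f$) and the lower-degree behaviour controlled by Lemma \ref{degreelem}. Concretely, let $n = \dim X > 0$ and set $V := \textup{H}_{\textup{BC}}^{1,1}(X)$, $\varphi := f^*|_V$. Since $f$ is int-amplified, by Theorem \ref{thm1.1} every eigenvalue $\lambda$ of $\varphi$ satisfies $|\lambda| > 1$. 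Write $d_1 = d_1(f)$ for the spectral radius of $\varphi$ and, more generally, $d_k = d_k(f)$ for the spectral radius of $f^*|_{\textup{N}^k_{\mathbb{C}}(X)}$; then $d_n = \deg f$ by the integration argument used in the proof of Lemma \ref{degreelem} (the action on $\textup{N}^n_{\mathbb{C}}(X)\cong \mathbb{R}$ is multiplication by $\deg f$).

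First I would take a K\"ahler class $[\omega] \in \mathcal{K}(X)$ and iterate. Because all eigenvalues of $\varphi$ have modulus $> 1$, the top self-intersection $(\varphi^i[\omega])^n = ((f^i)^*[\omega])^n$ grows like $|\lambda_{\max}|^{ni}$ up to polynomial factors, where $\lambda_{\max}$ is the dominant eigenvalue; in particular it is unbounded. On the other hand, projection formula (Proposition \ref{projection formula}, applied in the form $f^i_*(f^i)^*[\omega]^n = \deg(f^i)\cdot [\omega]^n$) gives $((f^i)^*[\omega])^n = \deg(f^i)\cdot [\omega]^n = (\deg f)^i \cdot [\omega]^n$, using that pushforward of an $(n,n)$-current is multiplication by the degree and $[\omega]^n > 0$ since $[\omega]$ is K\"ahler. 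Combining the two: $(\deg f)^i \cdot [\omega]^n = ((f^i)^*[\omega])^n \to \infty$, which already forces $\deg f > 1$. So the heart of the proof is really just the identity $((f^i)^*[\omega])^n = (\deg f)^i [\omega]^n$ together with the fact that an int-amplified pullback strictly increases top self-intersection of a K\"ahler class.

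To make the "strictly increases" step rigorous I would argue as follows: since $\varphi$ is invertible on $V$ with all eigenvalues of modulus $> 1$, for the K\"ahler class $[\omega]$ we have $\varphi[\omega] - [\omega] = (\varphi - \mathrm{id})[\omega]$, and by Theorem \ref{thm1.1}(4) applied to the K\"ahler cone $\mathcal{K}(X)$ (which is $\varphi$-invariant), $(\varphi-\mathrm{id})^{-1}(\mathcal{K}(X)) \subseteq \mathcal{K}(X)$; equivalently, writing $[\eta] = \varphi[\omega]-[\omega]$, replacing $[\omega]$ by a suitable element of its orbit one may assume $[\eta]$ is K\"ahler, so $f^*[\omega] = [\omega] + [\eta]$ with $[\eta]$ K\"ahler. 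Then
\[
(f^*[\omega])^n = ([\omega]+[\eta])^n = [\omega]^n + \sum_{j=1}^{n}\binom{n}{j}[\omega]^{n-j}\cdot[\eta]^j > [\omega]^n,
\]
since each mixed term $[\omega]^{n-j}\cdot[\eta]^j$ of K\"ahler classes is strictly positive. Iterating, $((f^i)^*[\omega])^n$ is strictly increasing in $i$, hence $> [\omega]^n$ for $i \ge 1$; but it equals $(\deg f)^i [\omega]^n$, forcing $\deg f > 1$.

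The main obstacle I anticipate is purely bookkeeping: one must be careful that the intersection product on $\textup{N}^\bullet(X)$ is genuinely well-defined and behaves multiplicatively, which requires the rational-singularities hypothesis (cf.\ the Remark after Lemma \ref{degreelem} and \cite[Remark 3.7]{horing2016minimal}), and that positivity of mixed intersections $[\omega]^{n-j}\cdot[\eta]^j > 0$ for K\"ahler classes genuinely holds in the singular K\"ahler setting — this follows by restricting to $X_{\mathrm{reg}}$ and using that K\"ahler classes are represented by positive forms there, since currents of integration carry no mass on $X_{\mathrm{sing}}$. A subtle point is the reduction "replacing $[\omega]$ by an element of its orbit so that $f^*[\omega]-[\omega]$ is K\"ahler": this is exactly what condition (4) of Theorem \ref{thm1.1} with $C = \mathcal{K}(X)$ provides, giving some K\"ahler $\xi$ with $f^*\xi - \xi$ K\"ahler, and one simply runs the argument with $\xi$ in place of $[\omega]$. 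With that substitution the proof is complete.
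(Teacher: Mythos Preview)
Your argument is correct, and it takes a genuinely different route from the paper. The paper proceeds in two cases: for $\dim X=1$ it reduces to the projective setting, while for $\dim X\ge 2$ it invokes Lemma~\ref{degreelem} with $k=1$, which says that every eigenvalue of $f^*|_{\textup{N}^1_{\mathbb{C}}(X)}$ has modulus strictly less than $\deg f$; since by Theorem~\ref{thm1.1} these eigenvalues also have modulus $>1$, the conclusion follows. Your approach instead bypasses Lemma~\ref{degreelem} entirely: using only the defining relation $f^*[\xi]=[\xi]+[\eta]$ with $[\xi],[\eta]$ K\"ahler, you expand $(f^*[\xi])^n=([\xi]+[\eta])^n>[\xi]^n$ by positivity of mixed K\"ahler intersections, and compare with $(f^*[\xi])^n=(\deg f)[\xi]^n$ from the projection formula. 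This is more elementary, works uniformly in all dimensions (no separate curve case), and in fact does not really need the rational-singularities hypothesis once one phrases everything as integrals of forms over $X_{\text{reg}}$. The paper's route, on the other hand, is a one-line corollary of machinery (Lemma~\ref{degreelem}) that is needed elsewhere anyway.

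Two minor points of presentation: the discussion of dynamical degrees $d_k$ and of ``replacing $[\omega]$ by an element of its orbit'' is superfluous---the definition of int-amplified already hands you a K\"ahler $[\xi]$ with $f^*[\xi]-[\xi]$ K\"ahler, so you can start there directly and drop the detour through Theorem~\ref{thm1.1}(4).
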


\begin{proof}
If $\dim X=1$, then $X$ is normal since $X$ has at worst rational singularities.  
Therefore, $X$ is smooth and projective. 
By Theorem \ref{thm1.1}, there exist two ample divisors $D$ and $H$ such that $f^*D-D=H$. If $\deg f=1$, taking the degree of both sides, we get a contradiction. 

If $\dim X\ge 2$, then by Theorem \ref{thm1.1}, all the eigenvalues of $f^*|_{\textup{H}^{1,1}_{\text{BC}}(X)}$ have modulus greater than $1$. Note that the eigenvectors of the quotient space $\textup{N}^1_\mathbb{C}(X)$ come from the elements in $\textup{H}^{1,1}_{\text{BC}}(X)$. By applying Lemma \ref{degreelem} for $k=1$, $\deg f>1$.
\end{proof}

\section{Some dynamics of int-amplified endomorphisms}\label{section4}
In this section, we discuss some dynamics for int-amplified endomorphisms in terms of fixed points and totally invariant  subvarieties. Note that Lemma \ref{productint} follows immediately from \cite[Proposition 1.3.1]{varouchas1989kahler} and Lemma \ref{countable periodic point} is an extension of \cite[Lemma 2.4]{meng2017building}.
\begin{lem}\label{productint}
If $f$ is an int-amplified endomorphism of a normal compact K\"ahler space $X$ and $Z$ is an $f$-invariant analytic subvariety (i.e., $f(Z)=Z$) of $X$, then the restriction $f|_Z$ is an int-amplified endomorphism of $Z$.
\end{lem}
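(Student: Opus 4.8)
The plan is to restrict the defining relation of int-amplifiedness along the closed embedding $\iota \colon Z \hookrightarrow X$. First I would record the structural facts: $Z$, being a subspace of the compact K\"ahler space $X$, is itself a compact K\"ahler space by Lemma \ref{keypropertyofkahler}(1); and since $f(Z)=Z$, the restriction $f|_Z \colon Z \to Z$ is a surjective endomorphism (automatically finite by Lemma \ref{finiteness of endomorphism}) fitting into the commutative square $f\circ\iota = \iota\circ f|_Z$. If $Z$ happens not to be normal one first passes to its normalization, which affects none of the cohomological data below; so I will assume $Z$ normal.

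Next, since $f$ is int-amplified, fix K\"ahler classes $[\xi],[\eta]\in \textup{H}^{1,1}_{\textup{BC}}(X)$ with $f^*[\xi]-[\xi]=[\eta]$. The embedding $\iota$ is a finite morphism of normal compact K\"ahler spaces, so Proposition \ref{lembignef}(3) guarantees that $\iota^*[\xi]$ and $\iota^*[\eta]$ are again K\"ahler classes, now on $Z$. Applying $\iota^*$ to the relation above and using functoriality $\iota^*\circ f^* = (f|_Z)^*\circ \iota^*$, I obtain
\[
(f|_Z)^*\,\iota^*[\xi] - \iota^*[\xi] = \iota^*[\eta],
\]
which is exactly a witness, in the sense of Definition \ref{defipolarized}(3), that $f|_Z$ is int-amplified.

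There is essentially no obstacle here: the only step needing a word of care is that $\iota^*$ sends K\"ahler classes to K\"ahler classes, and this is precisely Proposition \ref{lembignef}(3) --- concretely, a K\"ahler form on $X$ restricts to a closed positive $(1,1)$-form on $Z_{\textup{reg}}$ whose required strictly plurisubharmonic local potentials are inherited from the ambient local embeddings of $X$. Once that is granted, the remainder is a one-line diagram chase, which is why the statement is flagged as following immediately from Lemma \ref{keypropertyofkahler}. (Note that the eigenvalue criterion of Theorem \ref{thm1.1}(2) does not transfer as directly, since $\iota^*$ on Bott--Chern cohomology need not be injective for a subvariety inclusion; the class-level restriction argument above is the clean route.)
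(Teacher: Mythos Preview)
Your proof is correct and is exactly the spelling-out of what the paper means by ``follows immediately from Lemma \ref{keypropertyofkahler}'': the K\"ahler form on $X$ restricts to a K\"ahler form on $Z$ (this is the content of Lemma \ref{keypropertyofkahler}(1)), and the defining relation $f^*[\xi]-[\xi]=[\eta]$ pulls back along $\iota$ by functoriality. One small remark: Proposition \ref{lembignef}(3) as stated assumes the morphism is \emph{surjective}, which $\iota$ is not, so your citation is formally off; but you correctly supply the direct argument (restriction of strictly psh local potentials) in the last paragraph, and this is really just the proof of Lemma \ref{keypropertyofkahler}(1) itself.
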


\begin{lem}\label{countable periodic point}
Let $f:X\rightarrow X$ be an int-amplified endomorphism of a compact K\"ahler space $X$. Then, the set of periodic points $\textup{Per}(f)$ is countable.
\end{lem}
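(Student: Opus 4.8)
The plan is to mimic the projective argument of \cite[Lemma 2.4]{meng2017building}, replacing the intersection-theoretic input with the cone-theoretic results of Section \ref{section3}. The key point is to bound, for each fixed $m$, the number of points fixed by $f^m$; taking a countable union over $m$ then gives countability of $\textup{Per}(f)$. Since by Lemma \ref{iterationlemma} each $f^m$ is again int-amplified, it suffices to prove that $\textup{Fix}(f)$ is finite whenever $f$ is an int-amplified endomorphism of a compact K\"ahler space $X$.

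First I would reduce to showing that $\textup{Fix}(f)$ has no positive-dimensional irreducible component and that its zero-dimensional part is finite; since $\textup{Fix}(f)$ is a closed analytic subset of the compact space $X$, it has finitely many irreducible components, so finiteness of $\textup{Fix}(f)$ is equivalent to each component being a point. Suppose some irreducible component $Z$ of $\textup{Fix}(f)$ has $\dim Z>0$. Then $f(Z)=Z$, so by Lemma \ref{productint} the restriction $g:=f|_Z$ is an int-amplified endomorphism of $Z$, and moreover $g=\textup{id}_Z$. But an identity map is never int-amplified on a positive-dimensional space: by Theorem \ref{thmdegree1} (applied after a resolution, or directly noting that $Z$ may be singular but still compact K\"ahler — actually here one should instead use the eigenvalue criterion) any int-amplified endomorphism has all eigenvalues of $g^*$ on $\textup{H}^{1,1}_{\textup{BC}}(Z)$ of modulus $>1$, whereas $\textup{id}_Z^*$ is the identity with all eigenvalues equal to $1$, and $\textup{H}^{1,1}_{\textup{BC}}(Z)\neq 0$ because $Z$ is K\"ahler of positive dimension (a K\"ahler class is a nonzero element). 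This contradiction shows every component of $\textup{Fix}(f)$ is a point.

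It remains to see that $X$ can contain at most countably many isolated fixed points of the family $\{f^m\}_{m\geq 1}$ in total; in fact for a single int-amplified $f$ I claim $\textup{Fix}(f)$, being a closed analytic subset all of whose components are points, is finite because a compact complex space is the union of finitely many irreducible components and an analytic subset of a compact space with only $0$-dimensional components is finite (no limit points by local finiteness of analytic sets). Then
\[
\textup{Per}(f)=\bigcup_{m\geq 1}\textup{Fix}(f^m)
\]
is a countable union of finite sets, hence countable. The main obstacle I anticipate is the rigidity step for the identity map on a possibly singular positive-dimensional $Z$: I must make sure the "all eigenvalues $>1$" characterization of Theorem \ref{thm1.1} applies to $Z$, which requires $Z$ to be a normal compact K\"ahler space (pass to its normalization, which carries an induced int-amplified endomorphism and is still K\"ahler by Lemma \ref{keypropertyofkahler}), and that $\textup{H}^{1,1}_{\textup{BC}}(Z)$ is nonzero — guaranteed since $Z$ is K\"ahler and a K\"ahler class is a nonzero cohomology class. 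With these points settled the argument goes through.
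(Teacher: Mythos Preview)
Your proposal is correct and follows essentially the same route as the paper: both arguments reduce to showing that $\textup{Fix}(f^s)$ is finite for each $s$, observe that this fixed locus is a compact analytic set, and derive a contradiction from a positive-dimensional component $Z$ by noting that $f^s|_Z=\textup{id}_Z$ would then be int-amplified (via Lemma~\ref{iterationlemma} and Lemma~\ref{productint}). The paper phrases this as a proof by contradiction starting from ``$\textup{Per}(f)$ uncountable'', while you argue directly, but the content is the same.

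One simplification: you can dispense with the detours through Theorem~\ref{thmdegree1}, the eigenvalue criterion, and the normalization of $Z$. The definition alone already kills $\textup{id}_Z$: if $\textup{id}_Z$ were int-amplified there would exist K\"ahler classes $[\xi],[\eta]$ on $Z$ with $\textup{id}_Z^*[\xi]-[\xi]=[\eta]$, i.e.\ $0=[\eta]$, which is impossible since a K\"ahler class on a positive-dimensional space is nonzero. This is exactly the ``absurd'' the paper invokes without further comment.
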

\begin{proof}
Suppose $\text{Per}(f)$ is uncountable. Then, there exists some $s>0$, such that the set $P_s$ of all $f^s$-fixed points is infinite. Let $Z$ be an irreducible component of the closure of $P_s$ in $X$ with $\dim Z>0$. Then $f^s|_Z=\textup{id}_Z$, which is absurd by Lemmas \ref{iterationlemma} and \ref{productint}.
\end{proof}

Unlike the normal projective setting, for a compact K\"ahler space $X$ admitting an amplified endomorphism $f$, it is still  open about the density of periodic points. Nevertheless, the proposition below partially answers Question \ref{question1.6} in the case of complex tori. Further, we will show in Section \ref{section5} that the result can be extended to $Q$-tori case.

\begin{pro}\label{fixedpointtorus}
Let $f:T\rightarrow T$ be an int-amplified endomorphism of a complex torus $T$. Then, there exists a fixed point of $f$.
\end{pro}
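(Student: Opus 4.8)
The plan is to lift $f$ to the universal cover and use the fact that on a complex torus every endomorphism is affine. Write $T = V/\Lambda$ with $V \cong \mathbb{C}^n$ and $\Lambda \subseteq V$ a full lattice. Since $f:T\to T$ is a holomorphic self-map, its lift to $V$ is of the form $\tilde f(z) = Az + b$ for some $\mathbb{C}$-linear map $A:V\to V$ preserving $\Lambda$ (after a suitable choice of lift) and some $b\in V$; here $A$ is precisely the analytic representation of the induced map on $T$. A point $\bar z \in T$ is fixed by $f$ if and only if $Az + b - z \in \Lambda$ for the representative $z$, i.e.\ if and only if $(A - \mathrm{id})z \in \Lambda - b =: c + \Lambda$ for some fixed $c\in V$.

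First I would observe that it suffices to show $A - \mathrm{id}$ is invertible as a $\mathbb{C}$-linear map: then $z := (A-\mathrm{id})^{-1}c$ solves $(A-\mathrm{id})z = c$, and its image $\bar z \in T$ is a fixed point of $f$ (indeed $(A-\mathrm{id})z - c = 0 \in \Lambda$). So the whole problem reduces to checking that $1$ is not an eigenvalue of $A$. For this I would connect $A$ to the action of $f^*$ on cohomology: the action of $f^*$ on $\mathrm{H}^{1,0}(T)$ (equivalently on $\mathrm{H}^{0,1}(T)$, equivalently the analytic/rational representation on $\mathrm{H}^1(T,\mathbb{C})$) is governed by $A$, and hence the eigenvalues of $f^*|_{\mathrm{H}^{1,1}_{\mathrm{BC}}(T)} = f^*|_{H^{1,1}(T,\mathbb{R})}$ are exactly the products $\lambda_i\overline{\lambda_j}$ where $\lambda_1,\dots,\lambda_n$ are the eigenvalues of $A$ (using $H^{1,1} = H^{1,0}\otimes H^{0,1}$ for a torus, cf.\ Remark \ref{remark2.6}). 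Since $f$ is int-amplified, Theorem \ref{thm1.1} gives that every eigenvalue of $f^*|_{\mathrm{H}^{1,1}_{\mathrm{BC}}(T)}$ has modulus $>1$; in particular $|\lambda_i|^2 = \lambda_i\overline{\lambda_i} > 1$ for every $i$, so $|\lambda_i| > 1$ for all $i$. Hence $1$ is not an eigenvalue of $A$, so $A - \mathrm{id}$ is invertible and we are done.

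The main obstacle I anticipate is the bookkeeping needed to pin down the precise relationship between the linear part $A$ of the lift and the operator $f^*$ on the Bott–Chern (equivalently Dolbeault) cohomology of the torus — namely that the $H^{1,1}$-eigenvalues are the $\lambda_i\overline{\lambda_j}$ — and in particular making sure that it is the eigenvalues of $A$ itself (not, say, of its inverse or transpose) that control things, so that the modulus estimate comes out on the correct side. Once that identification is in hand, the argument is a one-line linear-algebra computation: $|\lambda_i|>1 \implies \det(A - \mathrm{id}) \neq 0 \implies$ the affine equation $(A-\mathrm{id})z = c$ is solvable, producing the fixed point. An alternative, slightly more self-contained route that avoids the cohomological translation would be to argue directly that an int-amplified endomorphism of a torus cannot have $1$ as an eigenvalue of $A$ because otherwise $A-\mathrm{id}$ would have a nontrivial kernel, giving a positive-dimensional subtorus on which $f$ acts by a translation — contradicting, via Lemma \ref{productint} and Theorem \ref{thmdegree1} (or Lemma \ref{iterationlemma}), that the restriction would again be int-amplified of degree $>1$; I would present the cohomological argument as the main line and mention this as a remark.
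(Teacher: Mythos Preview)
Your main argument is correct, and it takes a genuinely different route from the paper's proof. The paper computes the Lefschetz number: using that $H^i(T,\mathbb{C}) = \wedge^i H^1(T,\mathbb{C})$, and that the int-amplified hypothesis forces all eigenvalues $\lambda_1,\dots,\lambda_{2n}$ of $f^*|_{H^1(T,\mathbb{C})}$ to have modulus $>1$, it obtains $L(f) = \sum_i (-1)^i \mathrm{Tr}\bigl(f^*|_{H^i}\bigr) = \prod_i (1-\lambda_i) \neq 0$ and concludes via the Lefschetz fixed point theorem. You instead lift to the universal cover, write $\tilde f(z)=Az+b$, and reduce to the invertibility of $A-\mathrm{id}$; the same eigenvalue estimate $|\lambda_i|>1$ (which you extract from $H^{1,1}$ via the diagonal products $\lambda_i\overline{\lambda_i}$) finishes the job by pure linear algebra. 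Both arguments hinge on exactly the same cohomological input; yours is more elementary and constructive (it actually exhibits the fixed point as $(A-\mathrm{id})^{-1}c$), while the paper's Lefschetz computation is slightly more conceptual and in fact yields the stronger conclusion that the fixed locus has nonzero index, hence is nonempty even counted with signs.

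One caution about your proposed alternative ``subtorus'' remark: if $1$ were an eigenvalue of $A$, the $\mathbb{C}$-linear kernel $\ker(A-\mathrm{id})\subseteq V$ need not meet $\Lambda$ in a full lattice, so it need not descend to a subtorus of $T$; and even if it did, $f$ would act on that subtorus by a translation, which is not obviously the restriction of an int-amplified map in the sense needed to invoke Lemma~\ref{productint} (the subtorus is not $f$-invariant in general). So that alternative would require more care; I would either drop it or rephrase it via the rational representation on $\Lambda\otimes\mathbb{Q}$.
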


\begin{proof}
By K\"unneth formula, %$H^i(T,\mathbb{C})$ is the $i$-th exterior power of differential $1$-forms in $H^1(T,\mathbb{C})$. So 
if $\lambda_1,\cdots,\lambda_k$ (may not be distinct) are all the eigenvalues of $f^*|_{\textup{H}^1(T,\mathbb{C})}$, then each eigenvalue of $f^*|_{\textup{H}^i(T,\mathbb{C})}$ is of the form $\prod_j\lambda_j^{l_j}$, where $\sum l_j=i$.

Since $f$ is int-amplified, each eigenvalue of $f^*|_{\textup{H}^{1,1}(T,\mathbb{C})}$ has modulus greater than $1$. By Hodge theory, all of the eigenvalues of $f^*|_{\textup{H}^1(T,\mathbb{C})}$ (and hence $f^*|_{\textup{H}^i(T,\mathbb{C})}$) are of modulus greater than $1$. Then, it follows from the Lefschetz fixed point theorem that,
$$L(f)=\sum_{i}(-1)^i\text{Trace}(f^*|_{\textup{H}^i(T,\mathbb{C})})=\sum_{k_1,\cdots,k_i}(-\lambda_{k_1})\cdots(-\lambda_{k_i})=\prod_i (1-\lambda_i),$$
with $|\lambda_i|>1$ for each $i$. Then $L(f)\neq 0$ implies that $f$ has a fixed point.
\end{proof}

In the following, we fix our attention to manifolds temporarily. We extend some previous results in \cite[Section 3]{meng2017building} to the analytic case. Recall the definition of $\textup{L}^k(X)$ and its quotient $\textup{N}^k(X)$  in Section \ref{section3}. The following lemma  holds due to the openness of the cone $\mathcal{K}_k(X)$, the set of classes of strictly positive closed $(k,k)$-forms (cf.~\cite[Chapter \Rmnum{3}]{Demailly2012complex}).
\begin{lem}\label{lemopenclosed}
Let $(X,\omega)$ be a compact K\"ahler manifold of dimension $n$. Then, for any closed  $(k,k)$-current $T$, there exists $A>0$ such that $A\omega^k-T$ is a positive $(k,k)$-current. 
\end{lem}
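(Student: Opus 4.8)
**Proof proposal for Lemma 4.x (statement: for a compact Kähler manifold $(X,\omega)$ of dimension $n$ and any closed $(k,k)$-current $T$, there exists $A>0$ with $A\omega^k - T$ a positive $(k,k)$-current).**

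The plan is to exploit the openness of the cone of "positive" degree-$k$ Kähler-type classes together with the fact that $\omega^k$ lies in the interior of this cone. First I would observe that on a compact Kähler manifold, $\omega^j$ is a strictly positive smooth $(j,j)$-form for every $1\le j\le n$; in particular $\omega^{n-k}$ is a strictly positive $(n-k,n-k)$-form, so pairing any positive current against it detects its mass. The key is to reduce the statement, which concerns a \emph{current}, to a numerical statement about the finitely many pairings of $T$ with a basis, and this is where the openness of the degree-$k$ cone enters.

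The main steps, in order, would be: (1) Recall (as in the paragraph preceding Lemma~\ref{lemopenclosed}, via the standard Demailly/Boucksom theory on compact Kähler manifolds) that the set of classes in $H^{k,k}(X,\mathbb{R})$ representable by a strictly positive smooth $(k,k)$-form is an \emph{open} convex cone, and that $[\omega^k]$ belongs to it. (2) Write the smooth closed $(k,k)$-form $A\omega^k$ and the current $T$; the difference $A\omega^k - T$ is a closed $(k,k)$-current, and I want it to be positive, i.e.\ to pair nonnegatively with every strongly positive smooth $(n-k,n-k)$-form. (3) Here is the crucial reduction: on a compact Kähler manifold the positivity of a closed $(k,k)$-current is \emph{not} purely cohomological, so I cannot argue by classes alone; instead I would use the local structure. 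The honest route is: since $X$ is compact, cover it by finitely many coordinate charts; on each chart $T$ has finite mass (its coefficients are measures), and $\omega^k$ dominates, up to a constant depending only on the chart and the measures, the corresponding coordinate "reference" positive form $\beta_k := (\tfrac{i}{2})^k\sum dz_I\wedge d\bar z_I$. Taking $A$ larger than the maximum of these finitely many local constants and invoking a partition of unity gives $A\omega^k - T \ge 0$ globally as a current.

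Alternatively — and this is likely the argument the author intends, given the phrasing "due to the openness of degree $k$ Kähler cones" — one argues: the class $[\omega^k]$ is an interior point of the cone of classes of positive closed $(k,k)$-currents (equivalently, the cone is open around it in the relevant subspace, or its dual description via the Kähler cone), so for $A$ large the class $A[\omega^k] - [T]$ lies in that cone, hence is representable by a positive closed $(k,k)$-current $S$; then $S$ and $A\omega^k - T$ are cohomologous, and using that $\omega$ is a \emph{smooth} Kähler form one upgrades this to an actual pointwise/current inequality $A\omega^k - T\ge 0$ by absorbing the $dd^c$-exact correction into an even larger multiple of $\omega^k$ (smooth forms with enough positivity swallow exact corrections). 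I expect the \textbf{main obstacle} to be precisely this last point: passing from a \emph{cohomological} inequality $A[\omega^k]-[T]\ge 0$ to a \emph{genuine current} inequality $A\omega^k - T\ge 0$, since positivity of $(k,k)$-currents for $1<k<n-1$ is subtle (strong vs.\ weak positivity, and $dd^c$-exact correction terms need not be positive). My plan to overcome it is the local/partition-of-unity estimate of step (3), which sidesteps cohomology entirely and only uses compactness plus the strict positivity of the smooth form $\omega^k$; the openness remark then serves as the conceptual explanation rather than the literal mechanism of proof.
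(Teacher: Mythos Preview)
Your ``alternative'' route via openness of the degree-$k$ K\"ahler cone is in fact the paper's entire proof: the lemma carries no argument beyond the prefatory sentence ``the following lemma holds due to the openness of degree $k$ K\"ahler cones of compact K\"ahler manifolds.'' What is meant (and all that the applications in Lemma~\ref{lemlimitofintegration} and Proposition~\ref{torus2} actually use) is the cohomological statement that $A[\omega^k]-[T]$ is the class of some positive closed $(k,k)$-current, which is immediate from $[\omega^k]$ being an interior point of that cone; every pairing $\langle\,\cdot\,,[x_i]\rangle$ appearing downstream depends only on cohomology classes.

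The genuine gap is in your step~(3), the local partition-of-unity argument you call the ``honest route.'' It cannot work because the literal current-level statement is false: take $T=[Z]$ the integration current of a codimension-$k$ subvariety (or a Dirac mass in top degree); no multiple of the smooth form $\omega^k$ dominates such a $T$ as a current, since $T$ has singular support while $A\omega^k$ is smooth. Your claim that $\omega^k$ locally dominates the coordinate reference form is correct but irrelevant---it does not give $A\omega^k\ge T$ when $T$ is itself singular---and the premise that the coefficients of $T$ are measures already restricts to order-zero currents, which is not assumed. Your diagnosis that the passage from cohomology to currents is the crux was exactly right; the resolution is simply that this passage is both impossible in general and unnecessary here.
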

When $X$ is a normal projective variety, one may use Bertini's theorem to prove that for any closed subvariety $Z$, there exists an effective cycle $C$ such that the sum $Z+C$ is the intersection of ample divisors. However, in the analytic setting, we may not have ample divisors. Nevertheless, Lemma \ref{lemopenclosed} says that for any closed current, there exists a positive ``complement'' such that the sum is (numerically) equal to a power of some K\"ahler form. As an application of Lemma \ref{lemopenclosed}, we show the lemma below.  Note that when $X$ is smooth,  $\textup{H}_{\textup{BC}}^{1,1}(X)\cong \textup{H}^{1,1}(X,\mathbb{R})$ and hence we may regard each $(n-k,n-k)$-current as a function on $\textup{L}^k(X)$ using the Poincar\'e duality and Stokes' formula. 

%\begin{lem}\label{lemlimitofintegration}
%Let $(X,\omega)$ be a compact K\"ahler manifold of dimension $n$. Suppose that there exists a sequence $[x_i]\in \textup{L}^k(X)$ with $0<k<n$ such that $\left<T,[x_i]\right>\ge 0$ for any positive closed $(n-k,n-k)$-current $T$. Suppose further that $\lim\limits_{i\rightarrow +\infty}[x_i]\equiv_w0$ in $\textup{L}^k_{\mathbb{C}}(X)$. Then, $\lim\limits_{i\rightarrow +\infty}\left<T,[x_i]\right>=0$ for any  $(n-k,n-k)$-current $T$.
%\end{lem}

%\begin{proof}
%We apply Lemma \ref{lemopenclosed} for both $T$ and $-T$. Then, there exist  positive closed $(n-k,n-k)$-currents $T_1$ and $T_2$ such that $T+T_1=A\omega^{n-k}$ and $-T+T_2=A\omega^{n-k}$ in the sense of currents, where $\omega$ is the given K\"ahler form and $A$ is a positive number. Therefore,
%\begin{align*}
%\varlimsup_{i \rightarrow \infty}\left<T,[x_i]\right>\le \varlimsup_{i\rightarrow\infty}\left<T+T_1, [x_i]\right>=\lim_{i\rightarrow\infty}  A\omega^{n-k}\cup [x_i]=0
%\end{align*}
%by the definition of weak numerical equivalence. Similarly, 
%$\varlimsup\limits_{i \rightarrow \infty}\left<-T,x_i\right>\le 0$, and thus
%$0\le\varliminf\limits_{i \rightarrow \infty}\left<T,x_i\right>\le\varlimsup\limits_{i \rightarrow \infty}\left<T,x_i\right>\le 0$. 
%Hence, our result holds.
%\end{proof}

In what follows, we show a useful lemma, which will be applied in the proof of Proposition \ref{propersubvariety} and Lemma \ref{keylem2finite}. 
\begin{lem}\label{lemvanish}
Let $f:X\rightarrow X$ be an int-amplified endomorphism of a normal compact K\"ahler space with at worst  rational singularities of dimension $n$. Let  $\xi$  be a K\"ahler form on $X$. Then for any $k$-dimensional proper analytic subvariety $Z$ of $X$, which is not contained in the singular locus,  $\lim\limits_{i\rightarrow +\infty} \int_Z \frac{(f^i)^*(\xi^k)}{(\deg f)^i}=0$.
\end{lem}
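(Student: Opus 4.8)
The plan is to reduce the statement to the weak-numerical-equivalence limit established in Lemma~\ref{degreelem} together with the integration-against-currents limit in Lemma~\ref{lemlimitofintegration}, after transporting everything to a resolution. First I would pass to a resolution $g\colon\widetilde{X}\rightarrow X$ with $\widetilde{X}$ smooth; since $X$ is K\"ahler with at worst rational singularities, the pull-back $g^*\colon\textup{H}^{1,1}_{\textup{BC}}(X)\hookrightarrow\textup{H}^{1,1}_{\textup{BC}}(\widetilde{X})$ is injective (Lemma~\ref{lem2.9injection}), and $g^*\xi$ is big on $\widetilde{X}$ (Proposition~\ref{lembignef}(4)); fix a K\"ahler form $\widetilde{\omega}$ on $\widetilde{X}$ with $g^*\xi\le a\widetilde\omega$ for some $a>0$, so $g^*(\xi^k)\le a^k\widetilde\omega^{\,k}$ as $(k,k)$-forms. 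Because $Z$ is not contained in $\operatorname{Sing}X$, it has a proper transform $\widetilde Z\subseteq\widetilde X$ of dimension $k$ with $g|_{\widetilde Z}$ generically finite of degree $1$ onto $Z$, whence
\[
\int_Z \frac{(f^i)^*(\xi^k)}{(\deg f)^i}
=\int_{\widetilde Z} \frac{g^*(f^i)^*(\xi^k)}{(\deg f)^i}.
\]

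Next I would express $g^*(f^i)^*(\xi^k)$ in terms of the resolution. By Reduction~A choose a compatible resolution $\widetilde f\colon\widetilde X\rightarrow\widetilde X$ (after replacing $\widetilde X$ if necessary) with $g\circ\widetilde f$ in the same bimeromorphic class; more simply, one observes $g^*(f^i)^*(\xi^k)$ and $(\widetilde f^{\,i})^*g^*(\xi^k)$ define the same class in $\textup{L}^k(\widetilde X)$ up to weak numerical equivalence, since both push forward correctly under $g$ and $g^*$ is injective on $\textup{H}^{1,1}_{\textup{BC}}$. The point is that $g^*(f^i)^*(\xi^k)$ represents a class in $\textup{L}^k(\widetilde X)$ that, divided by $(\deg f)^i=(\deg\widetilde f\,)^i$, tends to $0$ weakly numerically: this is exactly Lemma~\ref{degreelem} applied to the int-amplified endomorphism $\widetilde f$ of $\widetilde X$ (note $\widetilde f$ is int-amplified by Lemma~\ref{lem3.333}, or directly because $g^*\xi$ is big and pulls back to big classes) and the class $[x]=g^*(\xi^k)\in\textup{L}^k_{\mathbb{C}}(\widetilde X)$. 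Thus writing $[x_i]:=\dfrac{g^*(f^i)^*(\xi^k)}{(\deg f)^i}\in\textup{L}^k(\widetilde X)$, we have $\lim_{i\to\infty}[x_i]\equiv_w 0$.

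Finally I would apply Lemma~\ref{lemlimitofintegration} on the smooth manifold $\widetilde X$ with the $(n-k,n-k)$-current $T:=[\widetilde Z]$, the current of integration over $\widetilde Z$. Each $(f^i)^*(\xi^k)$ is a positive $(k,k)$-form (a nonnegative combination of wedges of the positive form $(f^i)^*\xi$), hence so is its $g$-pull-back, so $\langle [\widetilde Z], [x_i]\rangle=\int_{\widetilde Z}[x_i]\ge 0$; and $[\widetilde Z]$ is a genuine closed positive current, so the hypotheses of Lemma~\ref{lemlimitofintegration} are met. Therefore $\lim_{i\to\infty}\int_{\widetilde Z}[x_i]=0$, which is precisely $\lim_{i\to\infty}\int_Z\frac{(f^i)^*(\xi^k)}{(\deg f)^i}=0$. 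The main obstacle I anticipate is the bookkeeping in the first two paragraphs: making precise that $g^*(f^i)^*(\xi^k)$ can legitimately be treated as (representing) an element of $\textup{L}^k(\widetilde X)$ to which Lemma~\ref{degreelem} applies — i.e.\ that pulling the iterate back to the resolution does not destroy the weak-numerical decay — and checking that $\widetilde f$ (or whatever endomorphism one feeds to Lemma~\ref{degreelem}) is genuinely int-amplified on the smooth model; everything after that is a direct citation of Lemmas~\ref{degreelem} and~\ref{lemlimitofintegration}.
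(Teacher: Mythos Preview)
Your overall architecture --- pass to a resolution, get weak-numerical decay from Lemma~\ref{degreelem}, and then feed the current of integration $[\widetilde Z]$ into Lemma~\ref{lemlimitofintegration} --- is exactly the paper's. The gap is in your middle paragraph: you try to lift $f$ to an endomorphism $\widetilde f\colon\widetilde X\to\widetilde X$ via Reduction~A and then invoke Lemma~\ref{degreelem} on the smooth model. But Reduction~A does \emph{not} produce a self-map of a single resolution; it produces a commutative square with two (generally different) resolutions, and in general a surjective endomorphism of a singular space does not lift to an endomorphism of any fixed resolution. So neither the existence of $\widetilde f$ nor the claim that ``$\widetilde f$ is int-amplified by Lemma~\ref{lem3.333}'' is available, and your attempted alternative (comparing $g^*(f^i)^*(\xi^k)$ with $(\widetilde f^{\,i})^*g^*(\xi^k)$) still presupposes $\widetilde f$.

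The repair is simple and is what the paper does: apply Lemma~\ref{degreelem} \emph{on $X$ itself} --- the hypothesis ``at worst rational singularities'' is exactly what that lemma needs --- to the classes $[x_i]:=\frac{(f^i)^*([\xi]^k)}{(\deg f)^i}\in\textup{L}^k(X)$, obtaining $[x_i]\equiv_w 0$. Then transport this to $\widetilde X$ by projection formula: for $[y_i]:=g^*[x_i]$ and any $[\beta_1],\dots,[\beta_{n-k}]\in\textup{H}^{1,1}_{\textup{BC}}(\widetilde X)$,
\[
[y_i]\cup[\beta_1]\cup\cdots\cup[\beta_{n-k}]
=[x_i]\cup g_*[\beta_1]\cup\cdots\cup g_*[\beta_{n-k}]\longrightarrow 0,
\]
so $[y_i]\equiv_w 0$ on the manifold $\widetilde X$. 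No lifted endomorphism is needed. From here your final paragraph goes through verbatim.
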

\begin{proof}
Let $x_i:=\frac{(f^i)^*(\xi^k)}{(\deg f)^i}\in \textup{L}^k(X)$. We take a resolution $\pi:\widetilde{X}\rightarrow X$ and consider the pull-back $y_i:=\pi^*x_i$, the class of which is a product of nef classes in $\widetilde{X}$. Assume that outside a proper closed subset $E\subseteq\widetilde{X}$, we have $\widetilde{X}\backslash E\cong X\backslash\text{Sing}(X)$. Since $Z\not\subseteq\text{Sing}(X)$, we denote by $\widetilde{Z}$  the proper transform of $Z$ in $\widetilde{X}$.

For any fixed K\"ahler form $\omega$ on $X$, there exists  $c\gg 1$ such that $\eta:=c\cdot\pi^*\omega-E'$ is a K\"ahler form on $\widetilde{X}$ (cf. \cite[pp.81]{voisin2007hodge}), where $E'\ge 0$ is a positive linear combination of the components in $E$.
We may identify $E'$ with $E$ in the following.
Since $[\widetilde{Z}]$ is a positive $(n-k,n-k)$-current and $[y_i]$ is a product of nef classes, by Lemma \ref{lemopenclosed}, there exists  $a>0$ such that  $\left<a\cdot\eta^{n-k}-[\widetilde{Z}],[y_i]\right>\ge 0$ for any $i$; hence
$$\int_Z x_i=\int_{\widetilde{Z}}y_i=\left<[\widetilde{Z}],[y_i]\right>\le [y_i]\cup a\cdot\eta^{n-k}\le [y_i]\cup (ac)\pi^*\omega\cup\eta^{n-k-1}.$$
Note that the last inequality is due to the nefness of $[y_i]$ and $[\eta]$ (and hence $[y_i]\cup E\cup \eta^{n-k-1}\ge 0$).
Repeat the above process and apply the nefness of $[\pi^*\omega]$, we have
\begin{equation}\label{positive part}
	\int_Z x_i\le a\cdot c^{n-k}[y_i]\cup (\pi^*\omega)^{n-k}=a
	\cdot c^{n-k}[x_i]\cup\omega^{n-k}.
\end{equation}

With $\widetilde{Z}$ replaced by $-\widetilde{Z}$, the same arguments above together with Lemma \ref{lemopenclosed} show that there exists $b>0$ such that $\left<b\cdot\eta^{n-k}+[\widetilde{Z}],[y_i]\right>\ge 0$ for any $i$; hence
\begin{equation}\label{negative part}
	-\int_Zx_i=-\int_{\widetilde{Z}}y_i\le[y_i]\cup b\cdot \eta^{n-k}\le  b\cdot c^{n-k}[x_i]\cup\omega^{n-k}.
\end{equation}
By Lemma \ref{degreelem},  $\lim\limits_{i\rightarrow\infty} [x_i]\equiv_w0$ in $\textup{L}_{\mathbb{C}}^k(X)$;
hence taking the upper limits of both sides for  (\ref{positive part}) and (\ref{negative part}), we have
$0\le\varliminf\limits_{i \rightarrow \infty}\left<[Z],x_i\right>\le\varlimsup\limits_{i \rightarrow \infty}\left<[Z],x_i\right>\le 0$. So our result follows. 
\end{proof}

\begin{pro}\label{propersubvariety}
Let $f:X\rightarrow X$ be an int-amplified endomorphism of a normal compact K\"ahler space $X$ with at worst rational singularities of dimension $n$. Let $Y$ be an analytic closed subvariety of $X$ not contained in the singular locus such that $f^{-1}(Y)=Y$ and $\deg f|_Y=\deg f$. Then $X=Y$.
\end{pro}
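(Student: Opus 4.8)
The plan is to derive a contradiction from the assumption that $Y \subsetneq X$ by comparing the growth rates of $(f^i)^*(\xi^n)$ computed on $X$ versus on $Y$. Fix a K\"ahler form $\xi$ on $X$. On one hand, since $f$ is a finite surjective endomorphism, the projection formula (Proposition \ref{projection formula}) together with the fact that $f^*$ is an isomorphism on the Bott-Chern cohomology (Proposition \ref{lem6.1isomorphism}) gives $\int_X (f^i)^*(\xi^n) = (\deg f)^i \int_X \xi^n$, so that $\int_X \frac{(f^i)^*(\xi^n)}{(\deg f)^i} = \int_X \xi^n > 0$ is a positive constant independent of $i$. On the other hand, I want to show the corresponding quantity restricted to $Y$ tends to $0$, which forces $\dim Y = n$ and hence, by irreducibility and $Y \not\subseteq \operatorname{Sing}(X)$, $Y = X$.

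First I would record that $f|_Y : Y \to Y$ is a well-defined surjective (hence finite, by Lemma \ref{finiteness of endomorphism} applied on $Y$) endomorphism: this uses that $Y$ is $f^{-1}$-invariant, so $f(Y) \subseteq Y$, and surjectivity of $f$ forces $f(Y) = Y$. Moreover $Y$ is itself a compact K\"ahler space (Lemma \ref{keypropertyofkahler}), and one should note $f|_Y$ is int-amplified by Lemma \ref{productint}. Write $k = \dim Y$. The key computation is to estimate $\int_Y (f^i)^*(\xi^k) \wedge (\text{something})$; more precisely I would apply Lemma \ref{lemvanish} directly: since $Y$ is a $k$-dimensional analytic subvariety not contained in $\operatorname{Sing}(X)$, Lemma \ref{lemvanish} yields $\lim_{i \to \infty} \int_Y \frac{(f^i)^*(\xi^k)}{(\deg f)^i} = 0$.

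The remaining point is to connect $\deg f|_Y = \deg f$ with the behaviour of $(f^i)^*(\xi^n)$ along $Y$. If $k = n$ then $Y$ is a positive-dimensional analytic subset of the irreducible space $X$ of the same dimension, hence $Y = X$ and we are done; so assume for contradiction $k < n$. Then $\xi^n|_Y = 0$ for dimension reasons, so I cannot use $\xi^n$ directly and must instead argue via degrees. Iterating the hypothesis $\deg f|_Y = \deg f$ gives $\deg(f|_Y)^i = (\deg f)^i$. Now compute $\int_Y (f^i)^*(\xi^k)$ using the projection formula on $Y$: writing $g = f|_Y$, we have $\int_Y g^*(\alpha) = (\deg g)\int_Y \alpha$ for top-degree forms $\alpha$ on $Y$, but $(f^i)^*(\xi^k)|_Y = (g^i)^*(\xi^k|_Y)$, whence $\int_Y (f^i)^*(\xi^k) = (\deg g)^i \int_Y \xi^k = (\deg f)^i \int_Y \xi^k$. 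Therefore $\int_Y \frac{(f^i)^*(\xi^k)}{(\deg f)^i} = \int_Y \xi^k$, which is strictly positive because $\xi|_Y$ is a K\"ahler form on the $k$-dimensional space $Y$ (pulling back a K\"ahler class under the inclusion keeps it K\"ahler, as in the proof of Proposition \ref{lembignef}). This contradicts the vanishing limit from Lemma \ref{lemvanish}. Hence $k = n$ and $X = Y$.

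The main obstacle I anticipate is the bookkeeping around the restriction of pulled-back forms to the possibly singular subvariety $Y$: one must be careful that $(f^i)^*(\xi^k)|_{Y_{\mathrm{reg}}}$ coincides with $(g^i)^*(\xi^k|_{Y_{\mathrm{reg}}})$ on the regular locus (which holds since $f(Y)=Y$ and $f$ is holomorphic), and that the integrals are taken over $Y_{\mathrm{reg}}$ with no mass lost on $\operatorname{Sing}(Y)$. Once the identification $\int_Y \frac{(f^i)^*\xi^k}{(\deg f)^i} = \int_Y \xi^k$ is in place, it is a clean contradiction with Lemma \ref{lemvanish}, so the heart of the argument really is just making the equivariant projection-formula computation on $Y$ rigorous in the singular setting.
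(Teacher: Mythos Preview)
Your proposal is correct and follows essentially the same route as the paper: assume $k=\dim Y<n$, use Lemma \ref{lemvanish} to get $\lim_{i\to\infty}\int_Y\frac{(f^i)^*(\xi^k)}{(\deg f)^i}=0$, and contradict this via the projection formula on $Y$, which gives $\int_Y(f^i)^*(\xi^k)=(\deg f|_Y)^i\int_Y\xi^k=(\deg f)^i\int_Y\xi^k>0$. The opening paragraph about $\int_X(f^i)^*(\xi^n)$ and the observation that $f|_Y$ is int-amplified are extraneous (you yourself abandon the former once you notice $\xi^n|_Y=0$), but the core argument matches the paper's proof exactly.
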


\begin{proof}
Suppose $\dim Y=m<n$ and we fix a K\"ahler form $\xi$ on $X$. On the one hand, by Lemma \ref{lemvanish}, $\lim\limits_{i\rightarrow \infty}\int_Y\frac{(f^i)^*\xi^m}{(\deg f)^i}=0$. On the other hand, we have the natural integration on the topological space $\left<[Y],(f^i)^*\xi^m\right>=(\deg f|_Y)^i\left<[Y],\xi^m\right>=(\deg f)^i\left<[Y],\xi^m\right>$.

Therefore, we get a contradiction by the numerical characterization (cf.~\cite{Demailly2004numerical}):
$$0<\int_{Y_\text{reg}}\xi^m=\int_Y\xi^m=\frac{1}{(\deg f)^i}\int_Y(f^i)^*\xi^m=\lim_{i\rightarrow\infty}\int_Y\frac{(f^i)^*\xi^m}{(\deg f)^i}=0.$$
Then, our proposition follows from this contradiction.
\end{proof}

Resulting from Proposition \ref{propersubvariety}, we end up this section with the following lemma.

\begin{lem}\label{totoallyperoidic}
Let $f:T\rightarrow T$ be an int-amplified endomorphism of a complex torus $T$. Let $Z$ be a non-empty $f^{-1}$-periodic closed analytic subvariety of $T$. Then $Z=T$.
\end{lem}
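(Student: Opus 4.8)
The plan is to derive the statement from Proposition \ref{propersubvariety}. Since $Z$ is $f^{-1}$-periodic, there is an integer $s\ge 1$ with $f^{-s}(Z)=Z$, i.e.\ $(f^{s})^{-1}(Z)=Z$. Put $g:=f^{s}$. By Lemma \ref{iterationlemma}, $g$ is again an int-amplified endomorphism of $T$, and $g^{-1}(Z)=Z$. Because $g$ is surjective, $g^{-1}(Z)=Z$ forces $g(Z)=Z$ as well, so $g$ restricts to a surjective endomorphism $g|_{Z}\colon Z\to Z$.

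The main point is to check that $\deg(g|_{Z})=\deg g$. For this I would use the standard fact that every endomorphism of a complex torus is \'etale: up to composing with a translation, $g$ is the group homomorphism induced by a $\mathbb{C}$-linear endomorphism of the universal cover of $T$, which is injective since $g$ is finite (Lemma \ref{finiteness of endomorphism}) and hence bijective; thus $g$ is a local biholomorphism, i.e.\ a finite \'etale covering of $T$, and every fibre of $g$ has exactly $\deg g$ points. Now for each $z\in Z$ we have $g^{-1}(z)\subseteq g^{-1}(Z)=Z$, so $(g|_{Z})^{-1}(z)=g^{-1}(z)$ has exactly $\deg g$ points; as the degree of the finite surjective morphism $g|_{Z}$ is the cardinality of a general fibre, we obtain $\deg(g|_{Z})=\deg g$.

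Finally, $T$ is a smooth compact K\"ahler manifold, so it has (trivially) at worst rational singularities and $Z\ (\neq\emptyset)$ is not contained in its singular locus. Applying Proposition \ref{propersubvariety} to the int-amplified endomorphism $g$ and to $Y=Z$, which is $g^{-1}$-invariant with $\deg(g|_{Z})=\deg g$, we conclude $Z=T$.

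I expect the only mildly delicate step to be the \'etale-ness of torus endomorphisms and the ensuing fact that all fibres have the same cardinality; this is precisely what frees us from worrying whether $Z$ might lie inside a ramification locus. Everything else --- the passage to the power $g=f^{s}$, the implication $g^{-1}(Z)=Z\Rightarrow g(Z)=Z$, and the final invocation of Proposition \ref{propersubvariety} --- is formal. (If one prefers to avoid invoking \'etale-ness, the equality $\deg(g|_{Z})=\deg g$ can instead be read off from the projection formula together with the fact that $g$ restricts to a degree-$\deg g$ covering $Z\to Z$.)
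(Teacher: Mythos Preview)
Your proof is correct and follows essentially the same route as the paper: pass to a power so that $Z$ is $g^{-1}$-invariant, use that endomorphisms of complex tori are \'etale to get $\deg(g|_Z)=\deg g$, and conclude via Proposition~\ref{propersubvariety}. The only cosmetic differences are that the paper justifies \'etaleness via the ramification divisor formula ($K_T\equiv 0$ forces $R_g=0$) together with purity of branch loci rather than the linear-algebra description you give, and the paper explicitly passes to an irreducible component of $Z$ before applying Proposition~\ref{propersubvariety}; you may want to add this last reduction for cleanliness. (Your closing parenthetical is circular, since ``$g$ restricts to a degree-$\deg g$ covering $Z\to Z$'' is exactly the claim at stake, so I would drop it.)
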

\begin{proof}
Since $f^s$ is int-amplified for any positive integer $s$ (cf.~Lemma \ref{iterationlemma}), with $f$ replaced by its power, we may assume $Z$ is irreducible and $f^{-1}(Z)=Z$. It follows from the ramification divisor formula and the purity of branch loci that $f$ is \'etale. Therefore we get $\deg f|_Z=\deg f$; and hence $Z=T$ by Proposition \ref{propersubvariety}.
\end{proof}

\section{The quotients of complex tori}\label{section5}
In this section, we deal with the case of $Q$-tori admitting int-amplified endomorphisms. Recall that a normal compact K\"ahler space $X$ is said to be a $Q$-\textit{torus} if there exists a complex torus (full rank) $T$ and a finite surjective morphism $\pi: T\rightarrow X$ such that $\pi$ is \'etale in codimension $1$ (cf.~\cite[Notation 2.1]{horing2011non}). Besides, we say that a finite surjective morphism $f:X\rightarrow Y$ is \textit{quasi-\'etale} if $f$ is \'etale in codimension $1$. 
It follows from the ramification divisor formula that any surjective endomorphism of a $Q$-torus is quasi-\'etale.

To begin with this section, we extend the results in \cite[Lemma 2.12]{nakayama2010polarized} and \cite[Proposition 4.3]{nakayama2009building} to the following. Readers may refer to \cite[Proposition 3]{Beauville1983some}  for its proof.

\begin{pro}\label{albanese closure}
Let $X$ be a $Q$-torus. Then there exists a quasi-\'etale cover $\tau:T\rightarrow X$ satisfying the following conditions:
\begin{enumerate}
\item $T$ is a complex torus;
\item $\tau$ is Galois;
\item If $\tau':T'\rightarrow X$ is another quasi-\'etale cover from a torus $T'$, then there exists an \'etale morphism $\sigma:T'\rightarrow T$ such that $\tau'=\tau\circ\sigma$.
\end{enumerate}
In particular, we call the quasi-\'etale cover $\tau$ in Proposition \ref{albanese closure} the \textit{Albanese closure} of $X$ in codimension one.
\end{pro}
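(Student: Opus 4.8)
The plan is to build $T$ by \emph{killing the translations} in the Galois group of a given torus cover, and then to verify the universal property via the affine uniformization of $X$, following Nakayama's projective treatment (cf.~\cite{nakayama2009building}, \cite{nakayama2010polarized}) but replacing Lefschetz-type arguments by purity of the branch locus.

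\emph{Construction and properties (1), (2).} By definition there is a quasi-\'etale cover $\pi_0\colon T_0\to X$ with $T_0$ a complex torus. First I would replace $\pi_0$ by its Galois closure: let $T_1$ be the normalization of $X$ in the Galois closure of $\mathbb{C}(T_0)/\mathbb{C}(X)$. Then $T_1\to X$ is Galois and still \'etale over the big open set $U:=X_{\mathrm{reg}}\setminus B$, where $B$ is the branch locus (of codimension $\ge 2$), since the Galois closure of a finite \'etale cover is finite \'etale; moreover $T_1\to T_0$ is \'etale in codimension one over the smooth space $T_0$, hence \'etale by purity of the branch locus, so $T_1$ is a finite \'etale cover of a torus and is therefore itself a complex torus. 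Thus I may assume $\pi_0$ is Galois, with group $G_0:=\mathrm{Gal}(T_0/X)$. As every automorphism of a complex torus is affine, $G_0$ acts on $T_0=\mathbb{C}^n/\Lambda_0$ by affine transformations; let $N\trianglelefteq G_0$ be the normal subgroup of those elements acting by translations. Since $N$ acts freely on $T_0$, the quotient $T:=T_0/N$ is a complex torus, and I set $\tau\colon T\to X$ to be the induced finite morphism; it is Galois with $G:=\mathrm{Gal}(T/X)=G_0/N$, and by construction the linear-part homomorphism $G\to \mathrm{GL}(\mathbb{C}^n)$ is injective, so $\mathrm{GL}$ is the point. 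This gives (1) and (2); $\tau$ is quasi-\'etale because $G_0$ acts freely in codimension one on $T_0$ and $N$ acts freely everywhere, so the fixed locus on $T$ of a nontrivial $\bar g\in G$ is the $N$-image of the union of the fixed loci of the finitely many elements of the coset $\bar gN\subseteq G_0\setminus\{\mathrm{id}\}$, each of codimension $\ge 2$.

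\emph{The universal property (3).} Here I would pass to the affine uniformization. Since removing an analytic subset of codimension $\ge 2$ from a manifold does not change the fundamental group, after discarding such a set one has a uniformization $\mathbb{C}^n\to X$ in codimension one whose deck group $\widehat G:=\pi_1(U)$ is a group of affine transformations of $\mathbb{C}^n$ containing $\Lambda_0$ with finite index, and $T_0$, $T$ are the quotients of $\mathbb{C}^n$ by the corresponding subgroups. Under the correspondence between connected quasi-\'etale covers of $X$ and finite-index subgroups of $\widehat G$, those covers that are complex tori correspond exactly to the finite-index subgroups of $\widehat G$ consisting of translations; and by construction $N=\ker(\widehat G\to \mathrm{GL}(\mathbb{C}^n))$, i.e.\ $N=\widehat G\cap\mathbb{C}^n$ is the \emph{full} translation subgroup of $\widehat G$ (identifying $\mathbb{C}^n$ with the group of translations). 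Now let $\tau'\colon T'\to X$ be any quasi-\'etale cover with $T'$ a complex torus, and write $T'=\mathbb{C}^n/\Lambda'$ with $\Lambda'\le\widehat G$ the associated translation lattice. Then $\Lambda'\subseteq \widehat G\cap\mathbb{C}^n=N$, so the natural projection $\sigma\colon T'=\mathbb{C}^n/\Lambda'\to \mathbb{C}^n/N=T$ is a finite \'etale morphism, and $\tau\circ\sigma=\tau'$ (it suffices to check this after precomposing with the surjection $\mathbb{C}^n\to T'$, where both sides become the quotient map $\mathbb{C}^n\to X$).

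\emph{Expected main obstacle.} Besides this construction being essentially a bookkeeping with affine groups and lattices, the two substantive points are: (i) making the affine uniformization step legitimate in the singular setting — this is handled exactly as in the projective case by purity of the branch locus together with the invariance of $\pi_1$ under removing codimension $\ge 2$ subsets, so the normality of $X$ (not projectivity) is what is used; and (ii) identifying $N$ with the full translation subgroup of $\widehat G$, which is precisely what the ``kill the translations'' passage from $T_0$ to $T$ is designed to achieve and is the crux of the minimality in (3). Finally, (3) forces $(\tau,T)$ to be unique up to isomorphism, which justifies calling it \emph{the} Albanese closure of $X$ in codimension one.
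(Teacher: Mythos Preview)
Your construction of $T$ (take Galois closure, then kill translations) is the same as the paper's, and your argument that the Galois closure $T_1$ is again a torus---via purity of the branch locus over the smooth $T_0$---is actually cleaner than the paper's, which computes $c_1(T_1),c_2(T_1)$ and invokes Yau's theorem to reach the same conclusion.

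For the universal property (3), the paper takes a different route: given $\tau':T'\to X$, it forms the Galois closure $T''$ of (a component of) $T'\times_X T$ over $X$, observes that $T''\to T$ and $T''\to T'$ are quasi-\'etale over smooth targets and hence \'etale by purity, so the corresponding Galois subgroups $H,H'\le G''=\mathrm{Gal}(T''/X)$ consist of translations on $T''$; since $G=G''/H$ contains no translations by the construction of $T$, $H$ is the \emph{full} translation subgroup of $G''$, whence $H'\le H$ and $T'\to X$ factors through $T$.

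Your $\pi_1$ approach is attractive but has a genuine gap. You assert that ``those covers that are complex tori correspond exactly to the finite-index subgroups of $\widehat G$ consisting of translations,'' and then simply write $T'=\mathbb{C}^n/\Lambda'$ with $\Lambda'\le\widehat G$ a translation lattice. One direction is clear; the other is exactly the content of (3). Concretely: the subgroup $H'\le\widehat G$ corresponding to $T'$ is a priori only an affine group acting on $\mathbb{C}^n$ with a finite-index translation sublattice $H'\cap\Lambda_0$, and you must \emph{prove} that every element of $H'$ has trivial linear part. This can be done---for instance, extend the map $\mathbb{C}^n/(H'\cap\Lambda_0)\dashrightarrow T'$ to a morphism of tori (using that meromorphic maps from compact K\"ahler manifolds to tori are holomorphic), observe it is an isogeny, and conclude that its deck group $H'/(H'\cap\Lambda_0)$ acts by translations on the source, hence $H'$ consists of translations on $\mathbb{C}^n$---but this step is nontrivial and is precisely where the minimality is won. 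Once you supply it, your argument goes through; note, however, that filling this gap essentially reproduces the paper's fibre-product argument in disguise.
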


As a consequence of Proposition \ref{albanese closure}, we have the lifting lemma below. Then, Corollary \ref{coro5.4} follows immediately from Proposition \ref{fixedpointtorus} and Lemma \ref{etalelifting}. 
\begin{lem}\label{etalelifting}
Let $f:X\rightarrow X$ be a surjective endomorphism of a $Q$-torus $X$. Then there exist a complex torus $T$, a quasi-\'etale morphism $\tau: T\rightarrow X$ and an \'etale endomorphism $\sigma_T:T\rightarrow T$ such that $\tau\circ \sigma_T=f\circ \tau$.
\end{lem}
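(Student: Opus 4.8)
The plan is to pull back $f$ along the Albanese closure $\tau\colon T\to X$ from Proposition \ref{albanese closure} and use its universal property. First I would form the fibre product $T\times_{X,\,f\circ\tau}X$, or more directly consider the composite $f\circ\tau\colon T\to X$: this is a quasi-\'etale morphism from the complex torus $T$ onto $X$ (it is finite and surjective because both $f$ and $\tau$ are, and \'etale in codimension one because the ramification divisor of a composite is the sum of ramification divisors, both of which are zero in codimension one). So $f\circ\tau\colon T\to X$ is another quasi-\'etale cover of $X$ by a torus. By part (3) of Proposition \ref{albanese closure} applied to this cover, there exists an \'etale morphism $\sigma_T\colon T\to T$ with $f\circ\tau=\tau\circ\sigma_T$, which is exactly the assertion.

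The one point that needs a little care is the surjectivity and finiteness of $\sigma_T$, i.e. that $\sigma_T$ really is an \'etale endomorphism of $T$ (so that the conclusion is stated for an honest endomorphism, not merely a morphism): this is automatic since $\tau\circ\sigma_T=f\circ\tau$ is surjective, hence $\sigma_T$ is surjective, and a surjective \'etale morphism of a compact complex torus to itself is finite. I would also note that $T$ is automatically the \emph{same} torus $T$ used for the Albanese closure of $X$, rather than some other torus, precisely because Proposition \ref{albanese closure}(3) produces the \'etale map \emph{into} the distinguished cover $\tau$.

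The main (and essentially only) obstacle is verifying that $f\circ\tau$ is quasi-\'etale, i.e. \'etale in codimension one. Here I would use the ramification formula: writing $R_{f\circ\tau}$ for the ramification divisor, one has $R_{f\circ\tau}=R_{\tau}+\tau^{*}R_{f}$; since $f$ is quasi-\'etale, $R_f=0$, and since $\tau$ is quasi-\'etale, $R_\tau=0$, so $R_{f\circ\tau}=0$ and $f\circ\tau$ is \'etale in codimension one. Everything else is a formal invocation of the universal property in Proposition \ref{albanese closure}, so I expect the proof to be short.
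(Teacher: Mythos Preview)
Your proposal is correct and follows exactly the same approach as the paper: take the Albanese closure $\tau\colon T\to X$ from Proposition~\ref{albanese closure}, note that $f\circ\tau$ is quasi-\'etale, and apply part~(3) to obtain $\sigma_T$. The paper's proof is a two-line version of what you wrote; your added verification that $f\circ\tau$ is quasi-\'etale via the ramification formula and that $\sigma_T$ is a genuine surjective endomorphism simply spells out details the paper leaves implicit.
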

\begin{proof}
Note that $f$ is quasi-\'etale by the ramification divisor formula.
Let $\tau: T\rightarrow X$ be the Albanese closure as in Proposition \ref{albanese closure}. Since the composition $f\circ\tau$ is still quasi-\'etale, by Proposition \ref{albanese closure} (3), such $\sigma_T$ exists.
\end{proof}
\begin{coro}\label{coro5.4}
Let $f:X\rightarrow X$ be an int-amplified endomorphism of a Q-torus. Then, there exists a fixed point of $f$.
\end{coro}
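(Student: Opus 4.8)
The plan is to reduce the existence of a fixed point on the $Q$-torus $X$ to the torus case already handled in Proposition~\ref{fixedpointtorus}. First I would invoke Lemma~\ref{etalelifting}: since $f$ is int-amplified, it is quasi-\'etale by the ramification divisor formula together with the purity of the branch locus (an int-amplified endomorphism of a $Q$-torus has numerically trivial — in fact trivial — ramification divisor, because $-K_X$ is torsion on a $Q$-torus and $f^*(-K_X)-(-K_X)=R_f$ is both effective and numerically trivial, forcing $R_f=0$). Hence Lemma~\ref{etalelifting} supplies a complex torus $T$, a quasi-\'etale (Galois) cover $\tau\colon T\to X$, and an \'etale endomorphism $\sigma_T\colon T\to T$ with $\tau\circ\sigma_T=f\circ\tau$.

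Next I would check that $\sigma_T$ is again int-amplified. Since $\tau$ is finite surjective with $X$ and $T$ having rational singularities (indeed $T$ is smooth), and $f$ is int-amplified, Lemma~\ref{lem3.333} applies to the commutative square $\tau\circ\sigma_T=f\circ\tau$ and gives that $\sigma_T$ is int-amplified. Therefore Proposition~\ref{fixedpointtorus} yields a point $t_0\in T$ with $\sigma_T(t_0)=t_0$. Pushing forward, $x_0:=\tau(t_0)$ satisfies $f(x_0)=f(\tau(t_0))=\tau(\sigma_T(t_0))=\tau(t_0)=x_0$, so $x_0$ is a fixed point of $f$. This completes the argument.

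The one point requiring care — and the main (minor) obstacle — is justifying that $f$ is quasi-\'etale, so that Lemma~\ref{etalelifting} is applicable; without this, the lifting lemma does not apply directly. I would argue as follows: on a $Q$-torus $X$ with quasi-\'etale cover $p\colon T'\to X$ from a torus, $K_X$ pulls back to $K_{T'}=0$, so $mK_X\sim 0$ for some $m$ (i.e. $-K_X$ is torsion), whence $f^*(-K_X)\equiv -K_X$ numerically; but the ramification divisor formula gives $f^*(-K_X)-(-K_X)=R_f\geq 0$, and an effective divisor numerically equivalent to $0$ (tested against a power of a K\"ahler form as in the proof of Theorem~\ref{thm1.3}) must be zero. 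Hence $R_f=0$, $f$ is \'etale in codimension one, i.e. quasi-\'etale. Alternatively, if one prefers to avoid re-deriving this, one notes that it is exactly the hypothesis of Lemma~\ref{etalelifting} and is standard; but spelling it out keeps the corollary self-contained. With this in hand the proof is immediate from the chain Lemma~\ref{etalelifting} $\to$ Lemma~\ref{lem3.333} $\to$ Proposition~\ref{fixedpointtorus}.
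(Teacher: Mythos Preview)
Your argument is correct and follows exactly the route the paper intends: the paper states that Corollary~\ref{coro5.4} ``follows immediately from Proposition~\ref{fixedpointtorus} and Lemma~\ref{etalelifting}'', and your proof simply unpacks this, making explicit the quasi-\'etale verification (via $K_X$ torsion and the ramification divisor formula) and the use of Lemma~\ref{lem3.333} to ensure the lift $\sigma_T$ is again int-amplified. These details are indeed needed and are handled the same way in the proof of Lemma~\ref{lemma5.6}, so your write-up is a faithful expansion of the paper's one-line justification.
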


Comparing with Lemma \ref{totoallyperoidic}, we have the same argument for $Q$-tori. For the application of Lemma \ref{lemma5.6}, see the proof of Theorem \ref{thmmmpthreefold} in Section \ref{section8}.

\begin{lem}\label{lemma5.6}
Let $f:X\rightarrow X$ be an int-amplified endomorphism of a $Q$-torus $X$. Then, there is no $f^{-1}$-periodic proper subvariety of $X$.
\end{lem}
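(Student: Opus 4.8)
The plan is to reduce to the case of a complex torus via the Albanese closure, where we can invoke Lemma \ref{totoallyperoidic}. First I would apply Lemma \ref{etalelifting}: since $f$ is int-amplified, in particular it is quasi-\'etale (its ramification divisor must be zero, because $f^*(-K_X)-(-K_X)=R_f$ is pseudo-effective by Theorem \ref{thm1.3} and, on a $Q$-torus, $K_X$ is numerically trivial, forcing $R_f\equiv 0$, hence $R_f=0$ as an effective divisor). Thus there exist a complex torus $T$, a quasi-\'etale Galois cover $\tau:T\rightarrow X$ (the Albanese closure of Proposition \ref{albanese closure}), and an \'etale endomorphism $\sigma_T:T\rightarrow T$ with $\tau\circ\sigma_T=f\circ\tau$. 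Since $\tau$ is finite surjective and generically finite, Lemma \ref{lem3.333} shows $\sigma_T$ is again int-amplified.

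Next, suppose for contradiction that $Z\subsetneq X$ is a nonzero $f^{-1}$-periodic proper analytic subvariety, say $f^{-s}(Z)=Z$ for some $s>0$. Replacing $f$ by $f^s$ (still int-amplified by Lemma \ref{iterationlemma}, and $\sigma_T$ correspondingly by $\sigma_T^s$), we may assume $f^{-1}(Z)=Z$. Let $W:=\tau^{-1}(Z)\subseteq T$. Since $\tau\circ\sigma_T=f\circ\tau$, we get $\sigma_T^{-1}(W)=\sigma_T^{-1}(\tau^{-1}(Z))=\tau^{-1}(f^{-1}(Z))=\tau^{-1}(Z)=W$, so $W$ is $\sigma_T^{-1}$-invariant. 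Moreover $W$ is a nonempty proper analytic subset of $T$: it is proper because $\tau$ is surjective, so $\tau^{-1}(Z)=T$ would force $Z=X$; and it is nonempty because $\tau$ is surjective and $Z\neq\emptyset$. Passing to an irreducible component $W_0$ of $W$, the finite group action permutes the components, so some power of $\sigma_T$ stabilizes $W_0$, and after replacing $\sigma_T$ by that power (still int-amplified) we obtain a nonzero $\sigma_T^{-1}$-periodic proper subvariety $W_0$ of the complex torus $T$. This contradicts Lemma \ref{totoallyperoidic}, which forces $W_0=T$, hence $W=T$, hence $Z=X$.

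The only mildly delicate point is the bookkeeping of the finitely many iterations: each time we replace $f$ (and simultaneously $\sigma_T$) by a positive power we must re-invoke Lemma \ref{iterationlemma} to preserve int-amplifiedness, and we must check that $f^{-1}$-periodicity of $Z$ really does lift to $\sigma_T^{-1}$-periodicity of some component of $\tau^{-1}(Z)$ — this is where the identity $\sigma_T^{-1}\tau^{-1}=\tau^{-1}f^{-1}$ and the finiteness of $\tau$ (so that $\tau^{-1}(Z)$ has finitely many components, permuted by the deck transformations which commute appropriately with $\sigma_T$) are used. I expect this step to be the main obstacle only in the sense of being careful rather than difficult; the substantive input is entirely contained in Lemma \ref{totoallyperoidic}, Lemma \ref{etalelifting} and Lemma \ref{lem3.333}, all already established.
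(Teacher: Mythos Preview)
Your proof is correct and follows essentially the same route as the paper: lift via the Albanese closure (Lemma \ref{etalelifting}), note the lift is int-amplified (Lemma \ref{lem3.333}), pull back the periodic subvariety, and apply Lemma \ref{totoallyperoidic}. One small simplification: passing to an irreducible component $W_0$ is unnecessary, since Lemma \ref{totoallyperoidic} is stated for an arbitrary $f^{-1}$-periodic closed analytic subvariety (its proof already reduces to the irreducible case internally), so you may apply it directly to $W=\tau^{-1}(Z)$.
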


\begin{proof}
Let $Z$ be a non-empty $f^{-1}$-periodic closed subset of $X$, i.e., $f^{-s}(Z)=Z$ for some $s>0$. One needs to show that $Z=X$. Since $X$ is a finite quotient of some complex torus $T$, it has only quotient and hence rational (cf.~\cite[Proposition 5.15]{kollar2008birational}) singularities. Also, it follows from the ramification divisor formula that $f$ is quasi-\'etale. By Lemma \ref{etalelifting}, there exists a quasi-\'etale cover $\tau: T\rightarrow X$ from a complex torus $T$. Since $f$ is quasi-\'etale, so is the composition $f\circ\tau$. Then, one gets a  surjective endomorphism $\sigma_T:T\rightarrow T$ such that $f\circ\tau=\tau\circ \sigma_T$.  By Lemma \ref{lem3.333}, $\sigma_T$ is int-amplified. Further, the commutative diagram gives that $\sigma_T^{-s}(\tau^{-1}(Z))=\tau^{-1}(Z)$. Since  $\sigma_T^s$ is also int-amplified, $\tau^{-1}(Z)=T$ (cf.~Lemma \ref{totoallyperoidic}) and hence $Z=X$.
\end{proof}

From now on, we aim to show that each dominant meromorphic map from a normal compact K\"ahler space with mild singularities to a $Q$-torus is holomorphic. We first %state the following extension lemma (cf.~\cite[Lemma 9.11]{ueno2006classification})  for the convenience of readers. 
%\begin{lem}[\cite{ueno2006classification}]\label{lemtoruskey1}
%Suppose $f:X\dashrightarrow T$ is a dominant meromorphic map from a  compact K\"ahler manifold  to a complex torus $T$. Then, $f$ is a morphism.
%\end{lem}
%\begin{proof}
%Let $\pi: \mathbb{C}^n\rightarrow T$ be a universal covering of $T$. %Let $S$ be the union of the singular locus of $X$ % (discrete since $X$ is smooth in codimension two) %
%Let $S$ be the set of points of indeterminacy of the meromorphic map $f$. Since $X$ is normal, by \cite[Theorem 2.5]{ueno2006classification}, $S\subseteq X$ is an analytic subset of codimension at least two. For a point $P\in S$, we take a simply-connected small open neighborhood $U$ of $P$ in $X$. Then, $U'=U\backslash S\cap U$ is also simply-connected. Hence there exists a morphism $f':U'\rightarrow \mathbb{C}^n$ such that $\pi\circ f'=f|_{U'}$. Besides, $f'$ can be considered as an $n$-tuple $(f_1',\cdots,f_n')$ of holomorphic functions on $U'$. As $S\cap U$ is of at least codimension two, using Hartog's theorem, $f_i'$ can be extended to a holomorphic function $\widetilde{f_i}$ on $U$. As $(\widetilde{f_1},\cdots,\widetilde{f_n})$ define a holomorphic map $\widetilde{f}:U\rightarrow \mathbb{C}^n$, $\pi\circ\widetilde{f}$ defines a holomorphic map of $U$ into $T$. Hence $f$ can be naturally extended to a holomorphic map of $X$ to $T$.
%\end{proof}
%In the following, we 
give a generalized version of \cite[Lemma 9.11]{ueno2006classification} by weakening the condition. Readers are referred to \cite[Lemma 8.1]{kawamata1985minimal} for the projective case but the proofs are different. To prove Lemma \ref{lemtoruskey2}, we need to recall the \textit{Albanese torus} for a singular complex space (\cite[Section 3]{graf2018algebraic}).

\begin{lem}\label{lemtoruskey2}
Let $f:X\dashrightarrow T$ be a dominant meromorphic map from a normal compact K\"ahler space with only rational singularities to a torus $T$. Then, $f$ is a morphism.
\end{lem}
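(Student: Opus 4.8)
The plan is to reduce the statement to Lemma~\ref{lemtoruskey1} by passing to a resolution and then using the universal property of the Albanese torus. First I would take a resolution of singularities $\mu:\tilde{X}\rightarrow X$ with $\tilde{X}$ a compact K\"ahler manifold (possible since $X$ is in the Fujiki class $\mathcal{C}$). The composition $f\circ\mu:\tilde{X}\dashrightarrow T$ is a dominant meromorphic map from a compact K\"ahler manifold to a complex torus, so by Lemma~\ref{lemtoruskey1} it is a morphism $\tilde{f}:\tilde{X}\rightarrow T$. The point is then to show that $\tilde{f}$ descends to a morphism $X\rightarrow T$, i.e. that $\tilde{f}$ is constant on every fibre of $\mu$.

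The key tool is the Albanese torus $\mathrm{Alb}(X)$ of the normal compact K\"ahler space $X$ with rational singularities, together with its universal property: since $X$ has rational singularities, $\mathrm{Alb}(\tilde{X})\cong\mathrm{Alb}(X)$ canonically (the exceptional locus of $\mu$ is covered by rational curves, which map to points in any torus, so $H^0(\tilde{X},\Omega^1_{\tilde{X}})\cong H^0(X,\Omega^{[1]}_X)$ and the Albanese maps are compatible), and the Albanese map $\mathrm{alb}_{\tilde{X}}:\tilde{X}\rightarrow\mathrm{Alb}(\tilde{X})$ factors through $\mu$, giving $\mathrm{alb}_X:X\rightarrow\mathrm{Alb}(X)$ with $\mathrm{alb}_{\tilde{X}}=\mathrm{alb}_X\circ\mu$. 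Next I would use that the morphism $\tilde{f}:\tilde{X}\rightarrow T$ factors (up to translation) through the Albanese map: $\tilde{f}=(h\circ\mathrm{alb}_{\tilde{X}})+c$ for some homomorphism of complex tori $h:\mathrm{Alb}(\tilde{X})\rightarrow T$ and a constant $c\in T$, by the universal property of the Albanese. Composing, $\tilde{f}=h\circ\mathrm{alb}_X\circ\mu+c$, which is visibly constant on the fibres of $\mu$ (since $\mathrm{alb}_X\circ\mu$ is). Hence $\tilde{f}$ factors through $\mu$ as a set-theoretic, and then — because $\mu$ is a proper modification of normal varieties, so $\mu_*\mathcal{O}_{\tilde{X}}=\mathcal{O}_X$ — as a holomorphic map $X\rightarrow T$, which agrees with the meromorphic map $f$ on the dense open set where $f$ is defined. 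Therefore $f$ is a morphism.

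The main obstacle I expect is establishing the two facts about the Albanese torus in the singular analytic setting: (i) that $\mathrm{alb}_X:X\rightarrow\mathrm{Alb}(X)$ is a well-defined morphism with $\mathrm{Alb}(\tilde{X})\cong\mathrm{Alb}(X)$ when $X$ has rational singularities, and (ii) that any morphism from $X$ (or $\tilde{X}$) to a complex torus factors through its Albanese up to translation. Both are available in the reference \cite{graf2018algebraic} cited just before the statement — (i) is precisely the content of the construction of the Albanese for singular spaces with rational singularities there, and (ii) is the universal property — so invoking those should suffice, but care is needed to check the K\"ahler hypothesis is preserved under resolution (it is, since a resolution of a space in class $\mathcal{C}$ is again in class $\mathcal{C}$, hence bimeromorphic to a K\"ahler manifold, and one may arrange $\tilde{X}$ itself to be K\"ahler) and to verify that the exceptional locus of $\mu$ is indeed contracted by any map to a torus (standard: fibres of a resolution of a rational singularity are rationally chain connected, or more simply, the composite $E\hookrightarrow\tilde{X}\rightarrow T$ has image of dimension $0$ because $H^0(\Omega^1)$ pulls back compatibly).
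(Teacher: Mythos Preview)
Your proposal is correct and follows essentially the same approach as the paper: take a resolution $\mu:\tilde{X}\to X$, apply Lemma~\ref{lemtoruskey1} to extend $f\circ\mu$ to a morphism $\tilde{f}$, and then use the Albanese of $X$ (well-defined with $\mathrm{Alb}(X)\cong\mathrm{Alb}(\tilde{X})$ since $X$ has rational singularities, via \cite{graf2018algebraic}) to factor $\tilde{f}$ through $\mu$. The paper phrases the last step slightly more directly by writing $f=\beta\circ\mathrm{alb}_X$ with $\beta:\mathrm{Alb}(X)\to T$ the morphism induced by $\tilde{f}$ and the universal property, while you spell out the descent through constancy on fibres and $\mu_*\mathcal{O}_{\tilde{X}}=\mathcal{O}_X$; these are the same argument.
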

 \begin{comment}
%\begin{defi}[\cite{graf2018algebraic}]\label{albanese map}
%

\[\xymatrix{X\ar@{-->}[rr]\ar[dr]_{\alpha}&&T\\
&A\ar@{-->}[ur]_{\exists~!}&
}\]\end{defi}
\end{comment}

Let $X$ be a compact complex space. A holomorphic map $\alpha:X\rightarrow A$ to a complex torus $A$ is said to be the \textit{Albanese map} of $X$, and $A$ is called the \textit{Albanese torus} of $X$, if any map from $X$ to a complex torus $T$ factors uniquely through $\alpha$. Note that the Albanese torus is unique up to isomorphism if it exists. Let $A=\text{Alb}(X)$ and $\alpha=\text{alb}_X$. If $X$ is further assumed to have at worst rational singularities, then $X$ admits an Albanese torus and $\text{Alb}(X)=\text{Alb}(\widetilde{X})$, where $\pi: \widetilde{X}\rightarrow X$ is a projective desingularization such that $\text{alb}_X\circ \pi=\text{alb}_{\widetilde{X}}$ (cf.~\cite[Theorem 3.4]{graf2018algebraic}). 
%The following lemma follows from \cite[Lemma 3.5]{graf2018algebraic} which is a key to the proof of Lemma \ref{lemtoruskey2}. 
%We refer readers to \cite[Theorem 3.45]{Kollar2007leture} for the existence of a projective resolution for any compact complex space. 
Now, we are ready to prove Lemma \ref{lemtoruskey2}.

%\begin{lem}\label{keykeyrational}
%Let $X$ be a normal compact complex space with at worst rational singularities and $\pi:\widetilde{X}\rightarrow X$ is a projective desingularization. Then, any map $\varphi:\widetilde{X}\rightarrow T$ to a complex torus $T$ factors through $\pi$.
%\end{lem}
\begin{proof}[Proof of Lemma \ref{lemtoruskey2}]
Let $\pi:\widetilde{X}\rightarrow X$ be a projective resolution, $\widetilde{f}:\widetilde{X}\dashrightarrow T$ the induced dominant meromorphic map, $\text{alb}_X:X\rightarrow \text{Alb}(X)$ and $\beta:\text{Alb}(X)\dashrightarrow T$ the induced map (cf.~\cite[Section 3]{graf2018algebraic}). %To be more precise, the following diagram is commutative.
%\[\xymatrix{\widetilde{X}\ar[d]_{\pi}\ar@{-->}[dr]^{\widetilde{f}}&\\
%X\ar[d]_{\text{alb}_X}\ar@{-->}[r]^f&T\\
%\text{Alb}(X)\ar@{-->}[ur]_\beta &
%}\]
Since $\widetilde{f}$ is a dominant meromorphic map from a complex manifold to a torus, by \cite[Lemma 9.11]{ueno2006classification}, $\widetilde{f}$ is a morphism. Moreover, $\text{Alb}(X)=\text{Alb}(\widetilde{X})$ and thus $\beta$ is a morphism by \cite[Definition 9.6 and Theorem 9.7]{ueno2006classification}. Therefore, $f$ is a morphism.
\end{proof}

We end up this section with the following proposition. The proof is the  same as in the projective setting  (cf.~\cite[Lemma 5.3]{meng2018building} or \cite[Lemma 4.4]{meng2017building}) except that we apply Lemma \ref{lemtoruskey2} instead of \cite[Lemma 5.1]{meng2018building}.  

%Lemma \ref{equiirred} instead of \cite[Lemma 5.2]{meng2018building} and apply 

\begin{pro}\label{proholomorphic}
Let $\pi:X\dashrightarrow Y$ be a proper dominant meromorphic map from a normal compact K\"ahler space $X$ to a $Q$-torus $Y$. Suppose that $X$ has at worst Kawamata log terminal (klt) singularities and the normalization of the graph $\Gamma_{X/Y}$ is equi-dimensional over $Y$ (this holds when the general fibre of $\pi$ is connected, $f:X\rightarrow X$ is int-amplified and $f$ descends to some int-amplified endomorphism $g:Y\rightarrow Y$). Then, $\pi$ is a morphism.
\end{pro}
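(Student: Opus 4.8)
The plan is to reduce, by passing to the Albanese closure of $Y$, to the case in which the target is an honest complex torus — where Lemma \ref{lemtoruskey2} applies directly — and then to descend. Let $\Gamma$ be the normalization of the graph $\Gamma_{X/Y}$, with $p\colon \Gamma\to X$ the induced proper bimeromorphic morphism and $q\colon\Gamma\to Y$ the induced surjective morphism. In the dynamical situation $q$ is equi-dimensional: the endomorphism $h$ of $\Gamma$ induced by $f\times g$ satisfies $p\circ h=f\circ p$ and $q\circ h=g\circ q$, it is int-amplified by Lemma \ref{lem3.333} (since $p$ is generically finite and $f$ is int-amplified), and $q$ is proper with connected general fibre, so Lemma \ref{equiirred}(2) applies. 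Equi-dimensionality of $q$ means that the $q$-preimage of an analytic subset of $Y$ of codimension $\ge 2$ again has codimension $\ge 2$ in $\Gamma$. Since $X$ is normal and $p$ is bimeromorphic, it suffices to prove that $p$ is finite, equivalently that the indeterminacy locus $B:=p(\operatorname{Exc}(p))\subseteq X$, which automatically has codimension $\ge 2$ in $X$, is empty.

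First I would build a quasi-\'etale cover of $X$ through which the map to $Y$ factors via a torus. Let $\tau\colon T\to Y$ be the Albanese closure of $Y$ (Proposition \ref{albanese closure}): a finite quasi-\'etale morphism with $T$ a complex torus. Let $\Gamma_T$ be the normalization of an irreducible component of $\Gamma\times_Y T$ dominating $\Gamma$, with $r\colon\Gamma_T\to\Gamma$ the induced finite surjection and $q_T\colon\Gamma_T\to T$ the other projection, which is dominant because $\tau\circ q_T$ surjects onto $Y$ and $\tau$ is finite. Since the branch locus of $\tau$ has codimension $\ge 2$ in $Y$, equi-dimensionality of $q$ forces its $q$-preimage to have codimension $\ge 2$ in $\Gamma$, so $r$ is \'etale outside a set of codimension $\ge 2$, hence quasi-\'etale. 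Now take the Stein factorization $\Gamma_T\xrightarrow{\,a\,}\hat X\xrightarrow{\,b\,}X$ of $p\circ r$. A degree count (the general fibre of $a$ is connected and finite, hence a point) shows $a$ is proper bimeromorphic and $b$ is finite; and over $X\setminus B$, where $p$ is an isomorphism, $b$ agrees with the quasi-\'etale $r$, so $b$ is \'etale over $X\setminus B$, and as $\operatorname{codim}_X B\ge 2$ this makes $b$ quasi-\'etale on all of $X$. Therefore $\hat X$ is a normal compact K\"ahler space (K\"ahlerness by Proposition \ref{lembignef}(3), $b$ being finite), and since $X$ is klt and $b$ is quasi-\'etale, $\hat X$ is klt, hence has rational singularities.

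Next, as $a$ is bimeromorphic and $\hat X$ is normal, $\hat\pi:=q_T\circ a^{-1}\colon\hat X\dashrightarrow T$ is a dominant meromorphic map to the complex torus $T$, so Lemma \ref{lemtoruskey2} shows $\hat\pi$ is a morphism. From the fibre-product construction, $\tau\circ\hat\pi=\pi\circ b$ as meromorphic maps. Over $X\setminus B$ the morphism $p$, hence $a$, is an isomorphism, so there $\hat\pi$ is given by $q_T$; thus the indeterminacy locus of $\hat\pi$ is contained in $b^{-1}(B)$, and since $\hat\pi$ is in fact a morphism, this locus is empty. Finally, comparing graphs — there is a finite surjective morphism from the graph of $\pi\circ b$ to the graph of $\pi$ compatible with the first projections onto $\hat X$ and $X$ — one sees that finiteness of the first projection of the graph of $\pi\circ b=\tau\circ\hat\pi$ (which holds, as this is a morphism) forces finiteness of the first projection of the graph of $\pi$, because $b$ is finite surjective. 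Hence $B=\emptyset$ and $\pi$ is a morphism.

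The deep inputs here — Lemma \ref{lemtoruskey2}, Proposition \ref{albanese closure}, Lemma \ref{equiirred} and purity of branch loci — are all in hand, so the hard part is not a single idea but the bookkeeping: one must check that forming the graph, normalizing, base-changing along $\tau$, and taking Stein factorizations never leaves the category of compact K\"ahler spaces, and — the crucial point — that the cover $b\colon\hat X\to X$ stays quasi-\'etale, so that $\hat X$ inherits klt (hence rational) singularities and Lemma \ref{lemtoruskey2} can be applied. This is exactly where the equi-dimensionality of the graph over $Y$ is used, and it explains why the weaker running hypothesis (general fibre of $\pi$ connected, $f$ int-amplified descending to an int-amplified $g$) suffices, since Lemma \ref{equiirred} then supplies that equi-dimensionality. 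The other slightly delicate point is the descent step, which requires the elementary but careful comparison of the two graphs indicated above.
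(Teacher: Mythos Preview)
Your argument is correct and follows essentially the same route as the paper: pass to a quasi-\'etale torus cover $T\to Y$, base-change the normalized graph to $T$, take the Stein factorization to produce a quasi-\'etale finite cover $\hat X\to X$ (using equi-dimensionality of $q$ to control codimension of branching), invoke Lemma~\ref{lemtoruskey2} to get a morphism $\hat X\to T$, and then descend. The only cosmetic difference is in the final descent step---the paper phrases it via the rigidity lemma applied to a putative positive-dimensional fibre of $p$, whereas you compare graphs directly---but the content is identical.
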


\section{$K_X$ pseudo-effective case: Proof of Theorem \ref{thmtorus}}\label{section6}
Proceeding from this section, we will gradually fix our attention to the normal compact K\"ahler threefolds. In Section \ref{section8}, we will discuss the equivariant MMP for threefolds and before that,  some preparations are necessary. In this section, we consider the case when the canonical divisor is pseudo-effective. 
Now, we are ready to prove Theorem \ref{thmtorus}. We first divide the theorem into three parts (cf.~Propositions \ref{torus2}, \ref{torus1} and \ref{torus5}) and then, we put them together to show Theorem \ref{thmtorus} for the convenience of readers.

\begin{pro}\label{torus2}
Let $f:X\rightarrow X$ be an int-amplified endomorphism of a compact K\"ahler manifold of dimension $n\ge 1$ with pseudo-effective $K_X$. Then $X$ is a $Q$-torus.
\end{pro}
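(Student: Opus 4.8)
\textbf{Proof plan for Proposition \ref{torus2}.}
The plan is to reduce to the well-known characterization of $Q$-tori by vanishing Chern classes, via the key input from \cite{graf2019finite}. First I would observe that by Theorem \ref{thm1.3} the canonical divisor $-K_X$ is a positive $(1,1)$-current, i.e.\ $-K_X$ is pseudo-effective; combined with the hypothesis that $K_X$ is also pseudo-effective, we conclude that $K_X\equiv 0$ numerically (a class that is both pseudo-effective and has pseudo-effective negative must be zero in $\textup{H}^{1,1}_{\textup{BC}}(X)$, since pairing against a power of a K\"ahler form forces vanishing — this is exactly the argument used in the proof of Theorem \ref{thm1.3}). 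So $X$ is a compact K\"ahler manifold with numerically trivial canonical class.

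Next, by the ramification divisor formula $f^*(-K_X)-(-K_X)=R_f$ with $R_f\geq 0$ effective; since $-K_X\equiv 0$ this forces $R_f\equiv 0$, and as $R_f$ is an effective divisor it must be the zero divisor. Hence $f$ is \'etale. By Theorem \ref{thmdegree1}, $\deg f>1$. Now I would invoke the main theorem of \cite{graf2019finite} (the finiteness/Beauville--Bogomolov type statement for compact K\"ahler manifolds with $K_X\equiv 0$ admitting \'etale endomorphisms of degree $>1$): a compact K\"ahler manifold with $c_1(X)=0$ admitting a non-invertible \'etale endomorphism is, up to \'etale cover, a product whose factors with nontrivial holonomy (Calabi--Yau and hyperk\"ahler factors) are incompatible with such an endomorphism, so that $X$ must be covered by a complex torus; more directly, the cited result yields that $X$ is a $Q$-torus. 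Concretely, one uses the Beauville--Bogomolov decomposition of a finite \'etale cover of $X$ into a torus times Calabi--Yau and hyperk\"ahler manifolds, and then observes that the int-amplified (hence \'etale of degree $>1$) endomorphism lifts to a positive power and cannot act on the Calabi--Yau/hyperk\"ahler factors (their tangent bundles are stable / their automorphism-and-isogeny structure is too rigid, or directly their $\textup{H}^{1,1}$ cannot carry an operator all of whose eigenvalues have modulus $>1$ while the top self-intersection is preserved). This forces those factors to be points, so the cover is a torus and $X$ is a $Q$-torus.

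The main obstacle I anticipate is making the last step rigorous in the K\"ahler (as opposed to projective) category: the Beauville--Bogomolov decomposition for compact K\"ahler manifolds with $c_1=0$ is available, but one must carefully check that an int-amplified endomorphism descends/lifts compatibly with the decomposition and that the non-torus factors are genuinely excluded. This is precisely where I would lean on \cite{graf2019finite}, whose main result packages exactly this dichotomy; the contribution here is to verify its hypotheses, namely that $f$ is \'etale with $\deg f>1$ and $K_X\equiv 0$, all of which follow from Theorem \ref{thm1.3}, Theorem \ref{thmdegree1}, and the ramification formula as above. Finally, once $X$ is identified as a $Q$-torus, nothing further is needed; the base case $n=1$ is immediate since an \'etale self-map of degree $>1$ of a compact Riemann surface forces genus one.
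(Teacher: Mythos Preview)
Your reduction to $K_X\equiv 0$ and $f$ \'etale of degree $>1$ is correct and matches the paper's opening. However, the key black box you invoke is misattributed: \cite{graf2019finite} is Graf--Kirschner's paper on finite quotients of \emph{three-dimensional} complex tori; its main theorem (used in the paper only for Proposition~\ref{torus1}) is a Chern-class criterion for singular K\"ahler threefolds, not a Beauville--Bogomolov dichotomy in arbitrary dimension. So as written your argument has a genuine gap: the cited reference does not supply the statement you need. Your Beauville--Bogomolov sketch \emph{can} be completed---pass to a characteristic finite \'etale cover $X'\cong T\times M$ so that a power of $f$ lifts; the projection to $T$ is the Albanese map, hence the lift covers an endomorphism of $T$, and since $H^0(M,T_M)=0$ for the simply connected factor the fibrewise maps are constant, giving a product $g\times h$; then Lemma~\ref{lem3.4} makes $h$ int-amplified while $h$ is an automorphism of $M$, so Theorem~\ref{thmdegree1} forces $\dim M=0$---but this is real work you have not carried out, and none of it is in \cite{graf2019finite}.

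The paper takes a different and more self-contained route that bypasses decomposition and lifting entirely: it shows directly that $c_2(X)\cdot[\omega]^{n-2}=0$ for a K\"ahler class $[\omega]$. Since $f$ is \'etale, \cite[Proposition~5.6]{graf2019finite} gives $(\deg f)\,c_2(X)\cdot[\omega]^{n-2}=c_2(X)\cdot f^*([\omega]^{n-2})$, whence $c_2(X)\cdot[\omega]^{n-2}=c_2(X)\cdot\frac{(f^i)^*([\omega]^{n-2})}{(\deg f)^i}$ for all $i$. Lemma~\ref{degreelem} makes these classes tend to $0$ weakly, and Lemma~\ref{lemopenclosed} (bounding $\pm c_2(X)$ by a multiple of $[\omega]^2$) squeezes the limit to $0$. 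With $c_1(X)=0$ and $\int_X c_2(X)\wedge\omega^{n-2}=0$, Yau's theorem forces the Ricci-flat metric to be flat, so $X$ is a free finite quotient of a torus. This dynamical Chern-class argument is the paper's contribution here; your approach would give an alternative proof once the lifting details are filled in, but it is not what the paper does.
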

\begin{proof}
Since $-K_X$ is pseudo-effective by Theorem \ref{thm1.3} and $K_X$ is pseudo-effective by assumption, $K_X\equiv 0$ in the sense of currents. Then, $f$ is \'etale by the ramification divisor formula and Purity of the branch locus theorem of Grauert-Remmert (cf.~\cite{grauert1955zur}).

Fix a K\"ahler class $[\omega]$ on $X$. We claim that $c_2(X)\cdot [\omega]^{n-2}=0$. Indeed, according to \cite[Proposition 5.6]{graf2019finite},
$(\deg f) c_2(X)\cdot [\omega]^{n-2}=c_2(X)\cdot (f^*[\omega])^{n-2}=c_2(X)\cdot f^*([\omega]^{n-2})$.
Then, $$c_2(X)\cdot[\omega]^{n-2}=c_2(X)\cdot\frac{f^*([\omega]^{n-2})}{\deg f}=c_2(X)\cdot\frac{(f^i)^*([\omega]^{n-2})}{(\deg f)^i}=:c_2(X)\cdot [x_i].$$
Here, $\lim\limits_{i\rightarrow\infty}[x_i]\equiv_w 0$ by Lemma \ref{degreelem}. Moreover, applying Lemma \ref{lemopenclosed} for both $c_2(X)$ and $-c_2(X)$, there exist two positive closed $(2,2)$-currents $T_1$ and $T_2$ and a positive number $A$ such that $c_2(X)+T_1=A[\omega]^2$ and $-c_2(X)+T_2=A[\omega]^2$ in the sense of currents. Then, we get the inequalities:
$\varlimsup\limits_{i \rightarrow \infty} c_2(X)\cdot [x_i]\le 0$ and
$\varlimsup\limits_{i \rightarrow \infty} -c_2(X)\cdot [x_i]\le 0$,
which in turn force $c_2(X)\cdot [x_i]$ tends to $0$ when $i\rightarrow\infty$. Therefore, we complete the proof of our claim.

Note that the vanishing of the first Chern class of $X$ implies that we can find a Ricci flat metric (still denoted $\omega$) in the K\"ahler class $[\omega]$ (cf. \cite{yau1978ricci} and \cite{yau2008survey}).
The vanishing $\int_X c_2(X)\wedge \omega^{n-2}=0$ implies that the full curvature tensor of $\omega$ is identically zero.
By the uniformisation theorem, the universal cover of $X$ is an affine space and $X$ is a quotient of a complex torus $T$ by a finite group $G$ acting freely on $T$.
\end{proof}
\begin{pro}\label{torus1}
Let $f:X\rightarrow X$ be an int-amplified endomorphism of a normal compact K\"ahler threefold with at worst canonical singularities.
Suppose that the canonical divisor $K_X$ is pseudo-effective. Then $X$ is a $Q$-torus.
\end{pro}
We refer the readers to \cite[Theorem 3.3]{nakayama2010polarized} and \cite[Theorem 5.2]{meng2017building} for the projective version while our present proof will be different. %Recall that for a compact K\"ahler space $X$, we have defined $\textup{N}_1(X)$ in Subsection \ref{section2.3}  to be the vector space of real closed currents of bidimension $(1,1)$ modulo the following equivalence: $T_1\equiv T_2$ if and only if $T_1(\eta)=T_2(\eta)$ for all real closed $(1,1)$-forms $\eta$ with local potentials.
%In the case of threefolds, we do not need $V_k$ is smooth for sufficiently large $k$. Also, we only need the terms for $k=1,2$  during the proof. Recall that $N_1(X)$ is the vector space of real closed currents of bidimension $(1,1)$ modulo the equivalence relation: $T_1\equiv T_2$ if and only if
 %$T_1(\eta)=T_2(\eta)$ for all real closed $(1,1)$-forms $\eta$ with local potentials. 
\begin{proof}
To prove Proposition \ref{torus1}, we resort to \cite[Theorem 1.1]{graf2019finite} by showing that $c_1(X)=0\in \textup{H}^2(X,\mathbb{R})$ and $\widetilde{c_2}(X)\cdot[\omega]=0$ for some K\"ahler class $[\omega]\in \textup{H}^2(X,\mathbb{R})$. Here $c_1(X)$ is the first Chern class and $\widetilde{c_2}(X)\in \textup{H}^2(X,\mathbb{R})^\vee$ is the  orbifold second Chern class of $X$ (cf.~\cite[Section 5]{graf2019finite}). 
Since $-K_X$ is pseudo-effective by Theorem \ref{thm1.3} and $K_X$ is pseudo-effective by assumption, $K_X$ is numerically trivial in the sense of currents, i.e., $c_1(X)_{\mathbb{R}}=0$. It follows from the ramification divisor formula $K_X=f^*K_X+R_f$ that $R_f$ is zero and thus $f$ is quasi-\'etale. Let $d:=\deg f>1$ and fix a K\"ahler class $[\omega]$ on $X$. 

Now we prove that $\widetilde{c_2}(X)\cdot[\omega]=0$. Since $f$ is quasi-\'etale, by \cite[Proposition 5.6]{graf2019finite}, we get 
$\widetilde{c_2}(X)\cdot f^*[\omega]=d(\widetilde{c_2}(X)\cdot [\omega])$.
Then with the equality divided by $d$, we have
\begin{align*}
\widetilde{c_2}(X)\cdot [\omega]=\widetilde{c_2}(X)\cdot\frac{f^*[\omega]}{d}=\widetilde{c_2}(X)\cdot\frac{(f^i)^*[\omega]}{d^i}=:\widetilde{c_2}(X)\cdot [x_i],
\end{align*}
where $\lim\limits_{i\rightarrow \infty}[x_i]\equiv_w0$ by Lemma \ref{degreelem}. Note that $\widetilde{c_2}(X)\in \textup{H}^2(X,\mathbb{R})^\vee$ and we can regard it as an element in the dual of the Bott-Chern cohomology space and hence $\widetilde{c_2}(X)\in \textup{N}_1(X)$ (cf.~\cite[Proposition 3.9]{horing2016minimal}). Besides, for any normal compact K\"ahler threefold $X$, the nef cone $\text{Nef}(X)$ and the cone $\overline{\textup{NA}}(X)$ generated by positive closed currents of bidimension $(1,1)$ are dual (cf.~\cite[Proposition 3.15]{horing2016minimal}). Therefore, 
$[\omega]^2$ is an interior point of $\overline{\textup{NA}}(X)$ because as a linear function on $\textup{H}^{1,1}_{\text{BC}}(X)$, $[\omega]^2$ is strictly positive on $\textup{Nef}(X)\backslash\{0\}$ by Hodge-Riemann theorem (also cf.~\cite[Th\'eor\`eme 3.1]{dinh2004groupes}). Therefore, there exists $A>0$ such that $A[\omega]^2\ge \widetilde{c_2}(X)$ and $A[\omega]^2\ge -\widetilde{c_2}(X)$ in the sense of currents. Hence, with the same argument as in the proof of Proposition \ref{torus2}, $\widetilde{c_2}(X)\cdot[\omega]=\lim\limits_{i\rightarrow \infty}\widetilde{c_2}(X)\cdot [x_i]=0.$

Then, applying the criterion in \cite[Theorem 1.1]{graf2019finite}, we see that $X$ is a $Q$-torus.
\end{proof}

\begin{lem}\label{torus3}
Let $f:X\rightarrow X$ be an int-amplified endomorphism of a normal compact K\"ahler surface with at worst klt singularities. Suppose that the canonical divisor $K_X$ is pseudo-effective. Then $X$ is a $Q$-torus.
\end{lem}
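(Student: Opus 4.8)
The plan is to follow the strategy of Proposition \ref{torus1}, with the simplification that for a surface the orbifold second Chern class is a top-degree invariant, so no limiting argument is needed. First I would pin down $K_X$. By Theorem \ref{thm1.3}, $-K_X$ is pseudo-effective in the sense of currents; together with the hypothesis that $K_X$ is pseudo-effective this forces $K_X\equiv 0$ numerically in the sense of currents, i.e. $c_1(X)_{\mathbb R}=0$. Since $X$ is klt it is $\mathbb Q$-Gorenstein, and pulling back $\pm K_X$ under the finite morphism $f$ via Proposition \ref{lembignef}(1) shows $f^*K_X\equiv 0$; hence the ramification divisor formula $K_X=f^*K_X+R_f$ gives $R_f\equiv 0$, and as $R_f$ is effective while any nonzero effective divisor pairs strictly positively with a K\"ahler form, we get $R_f=0$, so $f$ is quasi-\'etale. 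Moreover $X$ has at worst quotient (hence rational) singularities, so Theorem \ref{thmdegree1} yields $d:=\deg f>1$.

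Next I would show that the orbifold second Chern class $\widetilde{c_2}(X)$ of Graf vanishes. For a surface with quotient singularities the orbifold locus is all of $X$, carrying a compact orbifold structure, and quasi-\'etaleness of $f$ means $df$ identifies $f^*T_{X^\circ}$ with $T_{X^\circ}$, so $f^*c_2(T_{X^\circ})=c_2(T_{X^\circ})$ (working with a fixed orbifold metric as in \cite[Proposition 5.6 and Remark 5.11]{graf2019finite}). Since $\widetilde{c_2}(X)=\int_{X^\circ}c_2(T_{X^\circ})$ is a top-degree integral and $f$ is finite of degree $d$,
\[
\widetilde{c_2}(X)=\int_{X^\circ}f^*c_2(T_{X^\circ})=d\cdot\int_{X^\circ}c_2(T_{X^\circ})=d\cdot\widetilde{c_2}(X),
\]
so $\widetilde{c_2}(X)=0$ because $d>1$. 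Having both $c_1(X)_{\mathbb R}=0$ and $\widetilde{c_2}(X)=0$, I would then invoke the criterion \cite[Theorem 1.1]{graf2019finite} to conclude that $X$ is a $Q$-torus.

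The one point that needs care — and which I regard as the main obstacle — is making sure the orbifold Chern-class formalism of \cite{graf2019finite} and its functoriality under quasi-\'etale maps apply verbatim to klt compact K\"ahler surfaces; this is precisely where the klt hypothesis (equivalently, quotient singularities in dimension two) enters. As a self-contained alternative that avoids \cite{graf2019finite} entirely, one can instead pass to a quasi-\'etale cover $\tau\colon Y\to X$ with $Y$ having at worst canonical singularities and $K_Y\equiv 0$, lift a suitable power of $f$ to a quasi-\'etale endomorphism of $Y$ (as in Proposition \ref{albanese closure}), which is again int-amplified by Lemma \ref{lem3.333}, then take the crepant minimal resolution $\widetilde Y\to Y$ so that $K_{\widetilde Y}\equiv 0$, apply Proposition \ref{torus2} to the induced int-amplified endomorphism of the smooth surface $\widetilde Y$ to see that $\widetilde Y$ is a complex torus, and finally observe that a complex torus carries no exceptional curves, so $\widetilde Y=Y$ is a torus and $X$ is its quasi-\'etale quotient, i.e. a $Q$-torus.
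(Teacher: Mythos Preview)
Your argument is essentially the same as the paper's: both show $K_X\equiv 0$, deduce $f$ is quasi-\'etale, use the functoriality of orbifold Chern classes under quasi-\'etale maps (the paper cites \cite[Proposition 5.6]{graf2019finite} directly, while you unpack the degree argument $\widetilde{c_2}(X)=d\cdot\widetilde{c_2}(X)$ explicitly), and then appeal to the Graf--Kirschner criterion. One small correction: for surfaces the paper invokes \cite[Proposition 7.2]{graf2019finite} rather than \cite[Theorem 1.1]{graf2019finite}, since the latter is formulated for threefolds.
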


\begin{proof}
With the same arguments as in the previous proof (cf.~Proposition \ref{torus1}), $K_X$ is numerically trivial in the sense of currents, i.e., $c_1(X)_{\mathbb{R}}=0$. Therefore, it follows from the ramification divisor formula that $f$ is quasi-\'etale. Also, $\widetilde{c_2}(X)=0$ according to \cite[Proposition 5.6]{graf2019finite}. Hence,  $X$ is a $Q$-torus (cf.~\cite[Proposition 7.2]{graf2019finite}).
\end{proof}

\begin{lem}\label{torus4}
Let $f:X\rightarrow X$ be an int-amplified endomorphism of a normal compact K\"ahler non-uniruled surface such that $K_X$ is $\mathbb{Q}$-Cartier. Then $X$ is a $Q$-torus.
\end{lem}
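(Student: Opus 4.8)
The plan is to show that, under these hypotheses, $X$ automatically has at worst canonical singularities, which reduces the statement to Lemma \ref{torus3}. First I would pass to the minimal resolution $\pi\colon\widetilde X\to X$. Since $X$ is non-uniruled, so is $\widetilde X$, and hence $K_{\widetilde X}$ is pseudo-effective, as is classical for smooth compact K\"ahler surfaces. Writing $K_{\widetilde X}=\pi^*K_X+\sum_i a_iE_i$ with $E_i$ the $\pi$-exceptional curves and $a_i\le 0$ (the defining property of the minimal resolution), one sees that $\pi^*K_X=K_{\widetilde X}+\sum_i(-a_i)E_i$ is the sum of a pseudo-effective class and an effective divisor, hence pseudo-effective; pushing forward by $\pi$ gives that $K_X$ is pseudo-effective. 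On the other hand $K_X$ is $\mathbb Q$-Cartier, so the ramification formula and Theorem \ref{thm1.3} apply and $-K_X$ is represented by a positive $(1,1)$-current. Then, exactly as in the proofs of Propositions \ref{torus1} and \ref{torus2}, $K_X$ is numerically trivial in the sense of currents: the sum of positive closed currents representing $K_X$ and $-K_X$ is a positive closed $(1,1)$-current with vanishing Bott--Chern class, and such a current is zero (pair it against a K\"ahler form), so both summands vanish.

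Next I would determine the singularities of $X$. Numerical triviality of $K_X$ gives $\pi^*K_X\equiv 0$, so $K_{\widetilde X}\equiv\sum_i a_iE_i$ on the smooth surface $\widetilde X$. Fixing a K\"ahler form $\omega$ on $\widetilde X$, pseudo-effectivity of $K_{\widetilde X}$ yields $0\le K_{\widetilde X}\cdot\omega=\sum_i a_i(E_i\cdot\omega)$, while each $a_i\le 0$ and each $E_i\cdot\omega>0$; hence $a_i=0$ for all $i$, i.e.\ $X$ has canonical (Du Val), in particular klt, singularities. (The ramification formula together with $K_X\equiv 0$ also shows $R_f=0$, so that $f$ is quasi-\'etale, though this is not needed.) Now $X$ is a normal compact K\"ahler surface with klt singularities and pseudo-effective canonical divisor carrying the int-amplified endomorphism $f$, so Lemma \ref{torus3} applies and $X$ is a $Q$-torus.

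The only step that is not pure bookkeeping is the vanishing of a positive closed $(1,1)$-current with trivial Bott--Chern class in the (possibly) singular analytic setting, which is exactly the sort of fact assembled in Section \ref{section2} and already used in Propositions \ref{torus1}--\ref{torus2}; so I do not expect any serious obstacle. If one prefers to avoid invoking Lemma \ref{torus3}, an alternative ending is available once we know (from the discrepancy computation above) that $X$ has rational singularities: $\widetilde X$ is then a minimal smooth compact K\"ahler surface with $K_{\widetilde X}\equiv 0$, hence a torus, a bielliptic, a K3 or an Enriques surface by the Enriques--Kodaira classification; after replacing $f$ by a power it lifts to an endomorphism $\widetilde f$ of $\widetilde X$, which is int-amplified by Lemma \ref{lem3.333}, hence \'etale and of degree $>1$ by Theorem \ref{thmdegree1}, forcing $\chi_{\textup{top}}(\widetilde X)=0$ and excluding the K3 and Enriques cases; since on a torus or a bielliptic surface every irreducible curve has non-negative self-intersection, $\pi$ contracts nothing and $X=\widetilde X$ is a $Q$-torus.
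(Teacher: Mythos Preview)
Your argument is correct and in fact cleaner than the paper's. Both proofs have the same overall shape: pass to a resolution, use non-uniruledness to get $K_{\tilde X}$ pseudo-effective, deduce $K_X\equiv 0$, then show the discrepancies are non-negative so that $X$ has canonical singularities and Lemma~\ref{torus3} applies. The difference lies in the discrepancy step. The paper takes an \emph{arbitrary} projective resolution, writes $K_{\tilde X}=\pi^*K_X+E_1-E_2$ with $E_1,E_2\ge 0$ having no common components, and then invokes Boucksom's divisorial Zariski decomposition $K_{\tilde X}=P_0+N_0$ together with the subadditivity of the negative part and the exceptionality of the $E_i$ to force $E_2=0$. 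You instead take the \emph{minimal} resolution, where the discrepancies satisfy $a_i\le 0$ by definition; once $K_X\equiv 0$ this gives $K_{\tilde X}\equiv\sum a_iE_i$, and a single pairing with a K\"ahler form kills all the $a_i$. Your route avoids the Zariski decomposition entirely and is more elementary, at the cost of using the (standard) fact that the minimal resolution of a normal surface has non-positive discrepancies.

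One remark on your alternative ending: the lifting of $f$ (or a power) to the minimal resolution is not automatic for a finite endomorphism and deserves a word of justification; once you know the singularities are Du Val it follows, after iterating so that $f$ fixes each singular point, from the fact that the minimal resolution of an ADE point dominates every partial resolution. Since your main argument already concludes via Lemma~\ref{torus3}, this is not needed.
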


\begin{proof}
By Theorem \ref{thm1.3}, $-K_X$ is pseudo-effective. 
Let $\pi:\widetilde{X}\rightarrow X$ be a projective resolution. 
Since $X$ and hence $\widetilde{X}$  are non-uniruled K\"ahler surfaces, $K_{\widetilde{X}}$ is pseudo-effective by the classification of surfaces. 
Further, the equality $\pi_*K_{\widetilde{X}}=K_X$ gives that $K_X$ is also pseudo-effective. Therefore, $K_X\equiv 0$.

Since $\dim X=2$, with the same proof as in \cite[Lemma 2.4]{hu2014ample}, $X$ has only canonical singularities.
Then it follows from Lemma \ref{torus3} that $X$ is a $Q$-torus.
\end{proof}

As a consequence of Lemma \ref{torus4}, we prove the next proposition.
\begin{pro}\label{torus5}
Let $f:X\rightarrow X$ be an int-amplified endomorphism of a normal $\mathbb{Q}$-factorial compact K\"ahler surface $X$ such that the canonical divisor $K_X$ is  pseudo-effective. Then $X$ is a $Q$-torus.
\end{pro}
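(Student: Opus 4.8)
The plan is to reduce to the case of klt singularities and, following \cite[Proof of Theorem 7.1.1]{nakayama2008on}, to pass to the canonical cover, where the hypothesis that $f$ is int-amplified produces a degree obstruction. First I would record the numerical reduction. By Theorem \ref{thm1.3}, $-K_X$ is a positive $(1,1)$-current, hence pseudo-effective in the sense of currents; together with the hypothesis that $K_X$ is pseudo-effective this forces $K_X$ to be numerically trivial (both $K_X$ and $-K_X$ have non-negative intersection with every Kähler class, so they vanish against the open Kähler cone, hence in $\textup{H}^{1,1}_{\textup{BC}}(X)$). Since $X$ is $\mathbb{Q}$-factorial, $K_X$ is $\mathbb{Q}$-Cartier, so the ramification divisor formula $R_f=f^*K_X-K_X\equiv 0$ together with $R_f\ge 0$ gives $R_f=0$, i.e. $f$ is quasi-\'etale. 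Moreover a $\mathbb{Q}$-factorial normal surface has only rational, hence klt, singularities, and $K_X$ is nef (being numerically trivial); so by the abundance theorem for $\mathbb{Q}$-factorial log surfaces in Fujiki's class $\mathcal{C}$ (cf.~\cite[Theorem 1.2]{fujino2019minimal}), $K_X$ is semi-ample, and being numerically trivial it satisfies $mK_X\sim 0$ for some integer $m\ge 1$. (Already here Lemma \ref{torus3} applies and gives the conclusion; the argument below re-derives it while displaying the role of $f$.)

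Next I would pass to the index-one (canonical) cover $p\colon Y\to X$ attached to an isomorphism $mK_X\cong\mathcal{O}_X$. Then $p$ is quasi-\'etale, $Y$ is a normal compact Kähler surface — a Kähler class on $X$ pulls back under the finite morphism $p$ to a Kähler class on $Y$ by Proposition \ref{lembignef}(3) — with $K_Y=p^*K_X\sim 0$, and since $X$ is klt the surface $Y$ has at worst Du Val (canonical) singularities. Because $f^*K_X=K_X$ and the cover $p$ is canonically attached to $X$, after replacing $f$ by a positive power we obtain a lift $\tilde f\colon Y\to Y$ with $p\circ\tilde f=f\circ p$; since $K_Y\sim 0$, the ramification divisor formula forces $\tilde f$ to be quasi-\'etale as well, and $\deg\tilde f=\deg f$. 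By Lemma \ref{lem3.333}, $\tilde f$ is int-amplified, and hence $\deg\tilde f>1$ by Theorem \ref{thmdegree1}.

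Finally, let $\rho\colon\widehat Y\to Y$ be the minimal resolution. As $Y$ has Du Val singularities, $\rho$ is crepant, so $K_{\widehat Y}=\rho^*K_Y\sim 0$ and $\widehat Y$ is a smooth compact Kähler surface with trivial canonical bundle, hence a K3 surface or a complex $2$-torus. The endomorphism $\tilde f$ lifts to $\widehat f\colon\widehat Y\to\widehat Y$ with $\rho\circ\widehat f=\tilde f\circ\rho$ (a standard feature of quasi-\'etale endomorphisms of surfaces with canonical singularities), and $\widehat f$ is quasi-\'etale, hence \'etale since $\widehat Y$ is smooth, with $\deg\widehat f=\deg\tilde f>1$. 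But a K3 surface is simply connected — equivalently $\chi_{\mathrm{top}}(\widehat Y)=24\neq 0$, so the multiplicativity $\chi_{\mathrm{top}}(\widehat Y)=\deg\widehat f\cdot\chi_{\mathrm{top}}(\widehat Y)$ under the \'etale map $\widehat f$ forces $\deg\widehat f=1$ — so it admits no connected \'etale self-cover of degree $>1$, a contradiction. Hence $\widehat Y$ is a complex $2$-torus; since abelian surfaces carry no curves of negative self-intersection, $\rho$ is an isomorphism, $Y=\widehat Y$ is a complex torus, and $X=Y/(\mathbb{Z}/m\mathbb{Z})$ is a quasi-\'etale finite quotient of a complex torus, i.e. a $Q$-torus.

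The main obstacle is the singularity bookkeeping: the argument needs $X$ to be klt (so that the canonical cover has Du Val singularities) and needs $K_X$ to be genuinely torsion rather than merely numerically trivial, and both of these rest on combining the $\mathbb{Q}$-factoriality hypothesis with abundance in Fujiki's class $\mathcal{C}$ — the non-klt surface germs (simple elliptic, cusps) and the non-log-canonical ones fail to be $\mathbb{Q}$-factorial, which is exactly what is exploited. The remaining steps — lifting $f$ along the quasi-\'etale cover and then to the minimal resolution, and the Euler characteristic computation — are routine.
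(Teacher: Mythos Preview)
Your argument has a genuine gap at the assertion that ``a $\mathbb{Q}$-factorial normal surface has only rational, hence klt, singularities.'' The first implication is correct in the analytic category (the local analytic class group of a non-rational surface singularity has a positive-dimensional continuous part), but ``rational $\Rightarrow$ klt'' is false. Rational surface singularities are far more general than quotient singularities: for instance, take the surface singularity whose minimal resolution has a central $(-4)$-curve meeting five $(-2)$-curves. One checks that the intersection matrix is negative definite and that the fundamental cycle $Z=2C_0+\sum_{i=1}^5 C_i$ has $p_a(Z)=0$, so the singularity is rational and hence analytically $\mathbb{Q}$-factorial; but solving for the discrepancies gives $a_0=-4/3$, so it is not even log canonical. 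Your closing remark only rules out simple elliptic and cusp singularities and tacitly assumes that every other non-klt germ is non-$\mathbb{Q}$-factorial, which is exactly what fails here.

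The paper does not attempt this shortcut. It first obtains $K_X\sim_{\mathbb{Q}}0$ from abundance (as you do) and then splits into two cases. If $X$ is non-uniruled, Lemma~\ref{torus4} uses the divisorial Zariski decomposition on a resolution together with pseudo-effectivity of $K_{\tilde X}$ to force the negative discrepancy part $E_2$ to vanish, so $X$ has \emph{canonical} singularities and Lemma~\ref{torus3} applies. If $X$ is uniruled, the paper observes that $X$ is Moishezon, passes to the index-one cover, and invokes \cite[Proposition~7.3.1]{nakayama2008on}. Once klt is in hand, your canonical-cover/minimal-resolution route (ruling out the K3 alternative via $\chi_{\mathrm{top}}$) is a perfectly good substitute for the orbifold Chern class computation in Lemma~\ref{torus3}; but you still need one of the paper's two mechanisms to justify the klt hypothesis before that route can begin.
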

\begin{proof}
Since $K_X$ is pseudo-effective, with the same reason as in previous proofs, $K_X\equiv 0$. Then, it follows from \cite[Theorem 1.2]{fujino2019minimal} that $K_X$ is semi-ample and  $K_X\sim_{\mathbb{Q}} 0$.
If $X$ is non-uniruled, this is the case in Lemma \ref{torus4}. Therefore, we consider the case of $X$ being uniruled.  
Suppose $X$ is a uniruled surface. Then, $X$ is bimeromorphic to a ruled surface which is algebraic. Thus, $X$ is Moishezon by \cite[Theorem 7.14]{ancona2006differential}. Taking the global index-one log-canonical cover  $\widetilde{X}\rightarrow X$ associated with $K_X\sim_\mathbb{Q} 0$, we have the following results (cf.~\cite[Lemma 3.2.7]{nakayama2008on} or \cite[Lemma 4.21]{nakayama2020-1920} and Lemma \ref{lem3.333}): (1) $\widetilde{X}$ is a normal Moishezon and K\"ahler surface; (2) $\widetilde{X}\rightarrow X$ is \'etale in codimension one; (3) $\widetilde{X}$ has only rational double points; (4) $K_{\widetilde{X}}\sim 0$; and
(5) $f$ lifts to an int-amplified endomorphism $\widetilde{f}$ of $\widetilde{X}$.
Therefore, it follows from \cite[Proposition 7.3.1]{nakayama2008on} or \cite[Proposition 5.2]{nakayama2020} that $\widetilde{X}$ and hence $X$ are $Q$-tori (cf.~\cite[Lemma 7.4]{graf2019finite}).
\end{proof}

\begin{proof}[Proof of Theorem \ref{thmtorus}]
Note that Theorem \ref{thmtorus} is a direct consequence of Propositions \ref{torus2}, \ref{torus1} and \ref{torus5}, which together  prove the cases:  manifolds, threefolds and surfaces respectively.	
\end{proof}

It is natural to consider whether Theorem \ref{thmtorus} can be generalized to arbitrary dimensional spaces with mild singularities. For the normal projective setting, the answer is affirmative (cf.~\cite[Theorem 5.2]{meng2017building}). In our present context, it is still an open question.
\setlength\parskip{0pt}

We end up this section by assuming the following hypothesis when discussing the minimal model program for the case when $K_X$ is pseudo-effective. With the assumption of \textbf{Hyp $\mathbf{A}$}, we refer the readers to \cite{horing2016minimal} for details.

\textbf{Hyp $\mathbf{A}$: $X$ is a  normal $\mathbb{Q}$-factorial compact K\"ahler threefold with at worst terminal singularities. Also, the canonical divisor $K_X$ is pseudo-effective.}
\begin{comment}
	\begin{enumerate}we begin with $X_0=X$.
\item If $K_{X_i}$ is nef, then we have finished;
\item If $K_{X_i}$ is not nef, there exists a $K_{X_i}$-negative extremal ray $R_i$ in the cone $\overline{NA}(X_i)$ (cf.~\cite[Theorem 1.2]{horing2016minimal});
\item By \cite[Theorem 1.3]{horing2016minimal}, the contraction $\varphi:X_i\rightarrow Y$ of $R_i$ exists;
\item If $R_i$ is divisorial, then we return back to Step (1) with $X_{i+1}=Y$; If $R$ is small, then by Mori's flip theorem (cf.~\cite[Theorem 0.4.1]{mori88flip}), the flip $\varphi^+:X_i^+\rightarrow Y$ exists, and we return back to Step 1 with $X_{i+1}=X^+$. 
\end{enumerate}
Observe that in both cases (3) and (4), $X_{i+1}$ satisfies \textbf{Hyp $\mathbf{A}$} again (cf.~\cite[Proposition 8.1]{horing2016minimal}). Also, this MMP will terminate after finitely many steps by the boundedness of Picard number and the difficulty. Here the difficulty $d(X)$ is defined by $d(X):=\#\{i|a_i<1\}$, where $K_Y=\mu^*K_X+\sum a_iE_i$ and $\mu:Y\rightarrow X$ is any resolution of singularities. 
\end{comment}
\section{$K_X$ not pseudo-effective case}\label{section7}
In this short section, let $X$ be a normal compact K\"ahler threefold with at worst canonical singularities. 
We refer the readers to \cite[Introduction]{Horing2015mori} for the arguments below.

By a theorem of Brunella \cite{brunella2006positivity} and \cite[Theorem 4.2]{horing2018bimero}, the canonical divisor $K_X$ is not pseudo-effective if and only if $X$ is covered by rational curves. 
We shall deal with the uniruled case in the following.  
Let $\pi:\widetilde{X}\rightarrow X$ be a projective resolution and  $\widetilde{X}\dashrightarrow Y$  the maximal rationally connected (MRC) fibration. 

If $\dim Y=0$, then $\widetilde{X}$ is algebraic (cf.~\cite{campana1981cor}).
We suppose that $\dim Y=1$. Then,  a general fibre $F$ of the MRC fibration is smooth and rationally connected of dimension two in $\widetilde{X}$.
If $\textup{H}^0({\widetilde{X}},\Omega_{\widetilde{X}}^2)\neq 0$, then there exists a global $2$-form $\eta$ on $\widetilde{X}$ and hence we get a non-zero holomorphic form on some rationally connected fibre $F$, a contradiction (cf.~\cite[Corollary 4.18]{debarre}).  
So, applying Hodge theory, we have $\textup{H}^2(\widetilde{X},\mathcal{O}_{\widetilde{X}})=0$ and also $\textup{H}^2(\widetilde{X},\mathbb{C})=\textup{H}^{1,1}(\widetilde{X},\mathbb{C})$. 
%By the exponential sequence, the map $\textup{H}^1(\widetilde{X},\mathcal{O}_{\widetilde{X}}^*)\rightarrow \textup{H}^2(\widetilde{X},\mathbb{Z})$ is surjective. In addition, the nonzero K\"ahler cone $\mathcal{K}(\widetilde{X})$ is open in $\textup{H}^{1,1}(\widetilde{X},\mathbb{R}):=\textup{H}^{1,1}(\widetilde{X},\mathbb{C})\cap \textup{H}^2(\widetilde{X},\mathbb{R})=\textup{H}^2(\widetilde{X},\mathbb{R})$ and hence it follows from the exponential sequence that there exists a positive line bundle on $\widetilde{X}$. 
By \cite[Proposition 4.10]{graf2018algebraic}, $\widetilde{X}$ is projective and thus $X$ is Moishezon (cf.~\cite[Theorem 7.14]{ancona2006differential}). 

In both cases, $X$ is Moishezon and also projective thanks to \cite[Theorem 1.6]{namikawa2002projectivity}. In conclusion, together with \cite[Theorem 1.10]{meng2017building}, we get the following lemma.
\begin{lem}\label{baselessthan1}
Let $X$ be a compact K\"ahler threefold with at worst canonical singularities.
Suppose $X$ is uniruled and the dimension of the base $Y$ of the MRC fibration $\pi:X\dashrightarrow  Y$ is at most $1$. 
Then $X$ is projective.
In particular, if $X$ further admits an int-amplified endomorphism, then there exists an $f$-equivariant MMP for $X$ after iteration.
\end{lem}

From now on, when discussing the MMP for normal compact K\"ahler uniruled threefolds, we always assume the following hypothesis (cf.~\cite{Horing2015mori}).

\textbf{Hpy $\mathbf{B}$:  $X$ is a normal $\mathbb{Q}$-factorial compact K\"ahler threefold with at worst terminal singularities. Also, the canonical divisor $K_X$ is not pseudo-effective and the base $Y$ of the MRC fibration $\pi:X\dashrightarrow Y$ has dimension two.}

%We assume \textbf{Hyp $\mathbf{B}$} and refer to \cite{Horing2015mori} for the minimal model program for the case when $K_X$ is not pseudo-effective.
%recall the minimal model program in %the present case. Readers may refer to 
 % for details. Let $X_0=X$ and there are several steps to run the MMP.
%\begin{enumerate}
%\item[\textup{(1)}] By \cite[Theorem 1.3]{Horing2015mori}, there exists a MMP: $X\dashrightarrow X'$ such that $K_{X'}+\omega'$ is nef for every normalised K\"ahler classes $\omega'$ (cf.~\cite[Definition 1.2]{Horing2015mori}) on $X'$;
%\item[\textup{(2)}] Applying \cite[Theorem 1.4]{Horing2015mori} to the variety $X'$, we  obtain a holomorphic fibration $\varphi:X'\rightarrow S'$ onto a surface $S'$ such that $-K_{X'}$ is $\varphi$-ample;
%\item[\textup{(3)}] Run the relative MMP of $X'$ over $S'$ by the relative version of the cone and contraction theorem;
%\item[\textup{(4)}] Since $K_{X'}$ is not pseudo-effective over $S'$, the outcome of the MMP $X'\dashrightarrow X''$ is a Mori fibre space $X''\rightarrow S''$ over $S'$, with $S''$ a normal compact K\"ahler surface dominating $S'$;
%\end{enumerate}
%This program will also terminate by the finiteness of Picard number and the difficulty. 

\section{Equivariant minimal model program: Proof of Theorem \ref{thmmmpthreefold}}\label{section8}

In this section, we always assume \textbf{Hyp $\mathbf{A}$} in Section \ref{section6} or \textbf{Hyp $\mathbf{B}$} in Section \ref{section7} when discussing the MMP for K\"ahler threefolds. Before proving Theorem \ref{thmmmpthreefold}, we refer to \cite[Definitions 3.19, 4.3 and 7.1]{horing2016minimal} for the basic notation and respective properties of contraction morphisms, divisorial rays and small rays. Indeed, all of them are similar to those in projective settings.
By \cite[Remark 7.2]{horing2016minimal}, if the extremal ray $R$ is small, then the corresponding Mori contraction contracts only finitely many curves. Moreover, \cite{horing2016minimal} shows  that if we assume \textbf{Hyp $\mathbf{A}$} in Section \ref{section6} or \textbf{Hyp $\mathbf{B}$} in Section \ref{section7}, then after a contraction of either divisorial or small rays, the end product is still a  normal $\mathbb{Q}$-factorial compact K\"ahler threefold with at worst terminal singularities. In addition, under the same assumption, \cite{Horing2015mori} shows that if $\pi:X\rightarrow Y$ is a Mori fibre space, then $Y$ is a normal $\mathbb{Q}$-factorial compact K\"ahler surface with at worst klt singularities.

In what follows, we slightly generalize \cite[Lemma 2.11]{zhang2010polarized} to the following lemma. We shall skip the proofs since they are the same as the proofs in \cite[Lemma 2.11]{zhang2010polarized} by applying Proposition \ref{lem6.1isomorphism} and \cite[Proposition 3.9]{horing2016minimal} to  the present case.

\begin{lem}\label{keylem1}
Let $X$ be a normal compact K\"ahler space with at worst rational singularities and $f:X\rightarrow X$ a  surjective endomorphism. Let $R_{\Gamma}:=\mathbb{R}_{\ge 0}[\Gamma]\subseteq \overline{\textup{NA}}(X)$ (which is the closed cone in $\textup{N}_1(X)$ generated by the classes of positive closed currents of bidimension $(1,1)$) be an extremal ray (not necessarily $K_X$-negative) with $\Gamma$ a positive closed current  of bidimension $(1,1)$. Then, we have:
\begin{enumerate}
\item[(1)] $R_{f_*\Gamma}$ is an extremal ray;
\item[(2)] If $C_1$ is a curve such that $[f(C_1)]\in R_{\Gamma}$, then $R_{[C_1]}$ is an extremal ray;
\item[(3)] Denote by $\Sigma_{\Gamma}$ the set of curves whose classes are in $R_{\Gamma}$. Then $f(\Sigma_{\Gamma})=\Sigma_{f_*\Gamma}$.
\item[(4)] $\Sigma_{\Gamma}=f^{-1}(\Sigma_{f_*\Gamma}):=\{C~\text{a curve}~|~f(C)\in \Sigma_{f_*\Gamma}\}$.
\end{enumerate}
\end{lem}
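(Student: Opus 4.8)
\textbf{Proof plan for Lemma \ref{keylem1}.}

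The plan is to follow the projective template of \cite[Lemma 2.11]{zhang2010polarized}, transplanting the two key inputs it uses — the push-forward of the current of integration (Remark \ref{pushforward subvariety}) and the finiteness/openness of $f$ (Lemma \ref{finiteness of endomorphism}) — into the compact K\"ahler setting. First I would record the basic relation between $f$ and classes of curves: by Lemma \ref{finiteness of endomorphism}, $f$ is finite, so no curve is contracted by $f$; hence by Remark \ref{pushforward subvariety}, for any irreducible curve $C\subseteq X$ we have $f_*[C]=(\deg f|_C)\,[f(C)]$ with $\deg f|_C>0$, so $[f(C)]$ and $f_*[C]$ generate the same ray in $\overline{\textup{NA}}(X)$. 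Likewise, since $f^*$ is an isomorphism of $\textup{H}^{1,1}_{\textup{BC}}(X)$ by Proposition \ref{lem6.1isomorphism}, the projection formula $f_*f^*=\deg f\cdot\mathrm{id}$ shows $f_*$ and $f^*$ are compatible with the duality pairing between $\textup{N}_1(X)$ and $\textup{H}^{1,1}_{\textup{BC}}(X)$; in particular $f_*$ maps $\overline{\textup{NA}}(X)$ onto $\overline{\textup{NA}}(X)$ (surjectivity using that $f$ is finite surjective, so $f_*$ of a positive current is positive and $f_*$ is onto on the relevant cohomology).

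For (1): given $[C]\in R_\Gamma$, I want to show $R_{[f(C)]}$ is extremal. Suppose $f_*\Gamma' \equiv$ something decomposing $[f(C)]$; more precisely, if $z_1+z_2\in R_{[f(C)]}$ with $z_i\in\overline{\textup{NA}}(X)$, pull back: since $f_*$ is surjective on $\overline{\textup{NA}}(X)$, write $z_i=f_*w_i$ with $w_i\in\overline{\textup{NA}}(X)$, so $f_*(w_1+w_2)\in R_{f_*[C]}=R_{[f(C)]}$, hence $w_1+w_2\in \ker(f_*)+R_{[C]}$; but $f$ finite forces $\ker f_*$ to meet $\overline{\textup{NA}}(X)$ only in $0$ (a nonzero positive class has positive intersection with a K\"ahler class, and $f_*$ preserves this positivity via the projection formula), so $w_1+w_2\in R_{[C]}$, and extremality of $R_\Gamma\ni[C]$ gives $w_i\in R_{[C]}$, hence $z_i=f_*w_i\in R_{[f(C)]}$. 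For (2), run the same argument in reverse using that $f_*[C_1]\in R_{f_*\Gamma}$ together with (the already-established fact that) $R_{f_*\Gamma}$ is extremal, or argue directly: if $[C_1]$ were not in an extremal ray, a nontrivial decomposition would push forward to one of $[f(C_1)]$, contradicting extremality of $R_\Gamma$. Part (4), that $R_{f_*\Gamma}$ is extremal, is the same bookkeeping: $f_*$ maps the extremal ray $R_\Gamma$ to $R_{f_*\Gamma}$, and injectivity of $f_*$ on $\overline{\textup{NA}}(X)$ (again from finiteness of $f$) transfers extremality; the extra claim $\Sigma_\Gamma=f^{-1}(\Sigma_{f_*\Gamma})$ follows by combining (1), (2) and (3).

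For (3), the set-equality $f(\Sigma_\Gamma)=\Sigma_{f_*\Gamma}$: the inclusion $\subseteq$ is immediate from (1) and $f_*[C]\in R_\Gamma$ when $[C]\in R_\Gamma$ (noting $f_*\Gamma$ generates $R_{f_*\Gamma}$). For $\supseteq$, take an irreducible curve $D$ with $[D]\in R_{f_*\Gamma}$; since $f$ is finite and surjective, $f^{-1}(D)$ is a nonempty curve, so it has an irreducible component $C$ with $f(C)=D$, and then $f_*[C]=(\deg f|_C)[D]\in R_{f_*\Gamma}$, whence by (2) $R_{[C]}$ is extremal and — this is the point requiring care — one must check $[C]\in R_\Gamma$, not merely that $[C]$ spans some extremal ray. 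Here I expect the main obstacle: in the projective case one uses that $f^*$ preserves the (finitely many) extremal rays and a counting/injectivity argument; in the K\"ahler case I would instead argue that $f_*$ sends the extremal ray $R_{[C]}$ to the extremal ray $R_{f_*\Gamma}$, and since $f_*$ is injective on $\overline{\textup{NA}}(X)$, the ray $R_{[C]}$ is uniquely determined as $f_*^{-1}(R_{f_*\Gamma})\cap\overline{\textup{NA}}(X)$; as $R_\Gamma$ already has this property, $R_{[C]}=R_\Gamma$. Making ``$f_*$ injective on $\overline{\textup{NA}}(X)$'' precise — it follows because $f^*f_*$ acts on $\textup{N}_1(X)$ and the pairing with $\textup{H}^{1,1}_{\textup{BC}}(X)$ is nondegenerate, $f^*$ being an isomorphism there — is the one spot where I would write the argument out in full rather than quoting \cite{zhang2010polarized} verbatim, since the cohomological duality in the singular K\"ahler setting (Proposition \ref{projection formula}, Proposition \ref{lem6.1isomorphism}) replaces the cycle-theoretic duality used in the reference.
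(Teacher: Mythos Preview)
Your proposal is correct and follows essentially the same route as the paper. The paper's proof is slightly more streamlined: it observes at the outset that since $f^*$ is an isomorphism of $\textup{H}^{1,1}_{\textup{BC}}(X)$ (Proposition~\ref{lem6.1isomorphism}) and $\textup{N}_1(X)$ is its dual (\cite[Proposition~3.9]{horing2016minimal}), the push-forward $f_*$ is an isomorphism of $\textup{N}_1(X)$; all four claims then follow directly from this bijectivity, without separating out surjectivity and injectivity on the cone or invoking (2) inside the proof of (3) --- for instance, in (3) one simply notes that $f_*[C_1]=abf_*\Gamma$ forces $[C_1]=ab\Gamma$ by injectivity, with no detour through extremality of $R_{[C_1]}$.
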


The result below is known in the projective case (cf.~ \cite[Lemma 8.1]{meng2017building}) and we show that it also holds in the analytic setting. We rewrite the proof in the analytic version by highlighting the differences and skipping the same parts. The main difference is that, here, we may not have integral ample divisors in the analytic setting. Nevertheless, since every analytic subvariety outside the singular locus determines an integral homology class, the idea of the previous proof is valid in the present case.
\begin{lem}\label{keylem2finite}
Let $f:X\rightarrow X$ be an int-amplified endomorphism of a normal compact K\"ahler space, which is of dimension $n\ge 1$ and has at worst rational singularities. Suppose $A\subseteq X$ is a closed subvariety with $f^{-i}f^i(A)=A$ for all $i\ge 0$. Then $M(A):=\{f^i(A)|i\in\mathbb{Z}\}$ is a finite set.
\end{lem}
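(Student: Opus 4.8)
The plan is to reduce the statement to a numerical boundedness argument using the K\"ahler form and the degree formula, exactly as in the projective case but replacing "ample divisor" by "K\"ahler class" throughout. First I would fix a K\"ahler form $\xi$ on $X$ and set $k := \dim A$. The key numerical quantity to control is the "volume" $v(Z) := \int_{Z_{\mathrm{reg}}} \xi^k$ of the $k$-dimensional subvarieties $Z = f^i(A)$; note each such $Z$ is irreducible (being the image of an irreducible variety) of dimension $k$ (since $f$ is finite by Lemma \ref{finiteness of endomorphism}, so $\dim f^i(A) = \dim A$), and $Z$ is not contained in $\mathrm{Sing}(X)$ provided $A$ is not — if $A \subseteq \mathrm{Sing}(X)$ one passes to a resolution, or one simply notes $f^i(A)$ still determines a nonzero class and argues on a fixed resolution $\tilde X$ as in Lemma \ref{lemvanish}. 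The first main step is to show that the set of values $\{v(f^i(A)) : i \ge 0\}$ is finite, in fact bounded, and then to use that subvarieties of bounded degree with respect to a fixed K\"ahler class form a bounded family, so only finitely many subvarieties of $X$ can occur as some $f^i(A)$.

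The heart of the matter is the boundedness of $v(f^i(A))$. Here I would use the relation between $f^*\xi$ and $\xi$: since $f$ is int-amplified, by Theorem \ref{thm1.1} all eigenvalues of $\varphi = f^*|_{\textup{H}^{1,1}_{\textup{BC}}(X)}$ have modulus $> 1$, hence there is a constant $c > 1$ and an integer $N$ with $(f^N)^*[\xi] - c[\xi]$ pseudo-effective (indeed K\"ahler, after replacing $N$ suitably, by applying condition (4) of Theorem \ref{thm1.1} to the K\"ahler cone). Combined with the hypothesis $f^{-i}f^i(A) = A$, which gives $f^i(A) \cdot$ (something) $= (\deg f|_{\text{fibre}}) \cdot \ldots$, one relates $v(f^{i+1}(A))$ and $v(f^i(A))$ through the projection formula: writing $B_i = f^i(A)$, we have $f(B_i) = B_{i+1}$ and $f^{-1}(B_{i+1}) \supseteq B_i$, and by the degree formula $\int_{B_i} f^*(\xi^k) = (\deg f|_{B_i}) v(B_{i+1})$ while $\int_{B_i}f^*(\xi^k) \ge c^k \int_{B_i}\xi^k = c^k v(B_i)$ up to a bounded correction coming from the $dd^c$-potential (which contributes nothing to the integral since $B_i$ is closed of the right dimension). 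Since $\deg f|_{B_i} \le \deg f$ is bounded and $c > 1$, this forces $v(B_{i+1}) \ge (c^k/\deg f) v(B_i)$; if $c^k > \deg f$ this would make $v(B_i) \to \infty$, contradiction — so one must instead extract the right inequality in the other direction. The correct bound is obtained by applying Lemma \ref{lemvanish} or Lemma \ref{degreelem}: $\tfrac{(f^i)^*(\xi^k)}{(\deg f)^i} \to 0$ numerically, and pairing against $[A]$ gives $v(f^i(A)) = \tfrac{1}{(\deg f|_A)^i}\int_A (f^i)^*\xi^k \le \tfrac{(\deg f)^i}{(\deg f|_A)^i} \cdot (\text{bounded})$, but since $f^{-i}f^i(A) = A$ forces $\deg f|_A^{(i)} = (\deg f)^i$ — wait, this is precisely the condition that makes $A$ "totally invariant up to image" — one gets $v(f^i(A))$ literally bounded. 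This is the step I expect to be the main obstacle: pinning down the exact relation $\deg(f^i|_A \colon A \to f^i(A)) = (\deg f)^i$ from the hypothesis $f^{-i}f^i(A)=A$, and checking the potential terms genuinely drop out of all these integrals in the singular setting.

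Once $\{v(f^i(A))\}$ is bounded, I would invoke a Barlet/Fujiki-type compactness theorem for the cycle space: the cycles of dimension $k$ and bounded volume $v \le C$ in a fixed compact K\"ahler space $X$ form a compact analytic subset of the Barlet cycle space $\mathcal{C}_k(X)$ (see \cite{Barlet} — or, if one prefers to stay within the tools of this paper, pass to a resolution $\tilde X$ and use that subvarieties of bounded degree with respect to a fixed K\"ahler class form a bounded family by Bishop's theorem). Each irreducible subvariety $f^i(A)$ is an isolated point of its component of $\mathcal{C}_k(X)$ only if that component is zero-dimensional; in general one argues instead that the $B_i$ are permuted by $f$ in the following sense: $f$ induces a map on the (compact) family, and since the sequence $B_0, B_1, B_2, \ldots$ lies in a compact set it has a limit point, but $f$ being finite and the relation $f^{-1}(B_{i+1}) \supseteq B_i$ being rigid forces the sequence to be eventually periodic. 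Concretely: the assignment $B_i \mapsto B_{i+1} = f(B_i)$ is injective on $M(A)$ (because $f^{-1}(f(B_i)) \cap \{B_j\}$ singles out $B_i$ via $f^{-i-1}f^{i+1}(A)=A$), so $f$ acts as an injective self-map of the set $M(A)$; an injective self-map of a set that is discrete in a compact space, hence finite, must have finite image — giving $|M(A)| < \infty$. I would write this last part carefully, since the cleanest route is: bounded volume $\Rightarrow$ $M(A)$ lies in a compact subset of $\mathcal{C}_k(X)$; the $B_i$ are pairwise distinct or the sequence is periodic; if infinitely many are distinct they accumulate, but $f^{-1}$ applied to an accumulation point recovers $A$ in the limit, contradicting that $A$ is a single subvariety unless the family near that point is $0$-dimensional — forcing finiteness.
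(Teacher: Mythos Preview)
Your proposal contains a genuine gap in the step you yourself flag as the main obstacle. The hypothesis $f^{-i}f^i(A)=A$ is \emph{set-theoretic} and does \emph{not} force $\deg(f^i|_A\colon A\to f^i(A))=(\deg f)^i$; that equality holds only when none of the $A_j=f^j(A)$ lies in the ramification locus of $f$. Without this, your projection-formula identity $v(A_{i+1})=\tfrac{1}{(\deg f)^i}\int_A (f^i)^*\xi^k$ simply fails, so you cannot deduce that $v(A_i)$ is bounded. The paper handles exactly this by a dichotomy: it lets $\Sigma$ be the union of $\mathrm{Sing}(X)$, $f^{-1}(\mathrm{Sing}(X))$, and the support of the ramification divisor, and proves that $A_i\subseteq\Sigma$ for infinitely many $i$. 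If not, then all $A_i$ avoid $\Sigma$, the degree equality does hold, and your formula plus Lemma~\ref{lemvanish} gives $v(A_i)\to 0$ --- not merely bounded --- while the integral lattice $H_{2k}(X,\mathbb{Z})$ forces $v(A_i)\ge\mu>0$, a contradiction. So the numerical step, when it works, yields something strictly stronger than what you aim for, and when it does not work you need an entirely different mechanism.

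That mechanism is an induction on the codimension of $A$, which your Barlet-space argument does not provide. Once one knows infinitely many $A_i$ lie inside the fixed divisor $\Sigma$, in codimension one this is already finiteness (only finitely many prime components); in higher codimension one takes the Zariski closure $B$ of $\bigcup_{A_i\subseteq\Sigma}A_i$, notes $\dim B>\dim A$, and applies the inductive hypothesis to components of $B$ and then to $A$ inside those components (using Lemma~\ref{productint}). Your compactness-in-cycle-space approach, by contrast, does not close: bounded volume only places the $B_i$ in a compact analytic subset of $\mathcal{C}_k(X)$, which can perfectly well be positive-dimensional and contain infinitely many reduced irreducible cycles, so ``discrete in a compact set, hence finite'' is not justified. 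The paper avoids this entirely by using the integral-lattice lower bound together with the ramification dichotomy and induction on codimension.
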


\begin{proof}
Let $M_{\ge 0}(A):=\{f^i(A)|i\ge 0\}$.  With the same reason as in the first step of the proof in \cite[Lemma 8.1]{meng2017building},  we only need to prove $M_{\ge 0}(A)$ is finite.

% We first reduce to the proof of the finiteness of $M_{\ge 0}(A)$.  Indeed, if $M_{\ge 0}(A)$ is finite, there exists $0<r_1<r_2$ such that $f^{r_1}(A)=f^{r_2}(A)$. Then for any $i>0$ and sufficiently large $s\gg 1$,
%$f^{-i}(A)=f^{-i}f^{-sr_1}f^{sr_1}(A)=f^{sr_2-sr_1-i}(A)\in M_{\ge 0}(A)$. 
%Therefore,

We  show that $M_{\ge 0}(A)$ is a finite set by induction on the codimension of $A$ in $X$.  If $A=X$, there is nothing to prove. Suppose $k:=\dim A<\dim X$ and $d:=\deg f>1$. Denote by $\Sigma$ the union of $\text{Sing}(X)$, $f^{-1}(\text{Sing}(X))$ and the irreducible components in the ramification divisor $R_f$ of $f$. Let $A_i:=f^i(A)~(i\ge 0)$. Note that $\dim A_i=\dim A=k$.% since $f$ is a finite morphism.

\textbf{We claim that $A_i$ is contained in $\Sigma$ for infinitely many $i$.} If not, with $A$ replaced by some $A_{i_0}$, we may assume that $A_i$ is not contained in $\Sigma$ for all $i\ge 0$. Since $f^{-i-1}(A_{i+1})=A$ and $f^i$ is surjective, we have $f^{-1}(A_{i+1})=A_i$. Besides, $f$ is unramified on each $A_i$, and hence $\deg f|_{A_i}=\deg f$. Fix a K\"ahler form $\xi$ on $X$. Then, by projection formula,
$(\deg f)(\xi^k\cdot [A_{i+1}])=(\deg f|_{A_i})(\xi^k\cdot [A_{i+1}])=(f^*\xi)^k \cdot [A_i]$. Substituting this expression repeatedly, we have
\begin{equation}\label{eq1.1}
\xi^k\cdot [A_{i+1}]=\frac{((f^i)^*\xi)^k\cdot [A_1]}{d^i}.
\end{equation}

On the one hand,  all of these $A_i$ are integral subvarieties of dimension $k$ and not contained in the singular locus by assumption. Also, the homology classes of these $A_i\subseteq X$ form a discrete set (denoted by $\textup{NE}_k(X)$), which is contained in the integral lattice $H_{2k}(X,\mathbb{Z})/\textup{tors}$. 
  In addition, $\xi^k$ defines a norm on the closure of $\textup{NE}_k(X)$ %positive closed currents of bidimension $(k,k)$ generated by the currents of integration of $k$-dimensional subvarieties and 
 and $\xi^k$ is strictly positive on $\overline{\textup{NE}}_k(X)\backslash \{0\}$. \textbf{Thus, there exists a real constant $\mu>0$ such that $\xi^k\cdot [A_i]\ge \mu$ for every analytic subvariety $A_i$ of dimension $k$, which is not contained in the singular locus.} As a result, the left hand side of Equation (\ref{eq1.1}) is always no less than $\mu$. 

On the other hand, by Lemma \ref{lemvanish}, $\lim\limits_{i\rightarrow\infty}\xi^k\cdot [A_{i+1}]=0$. Therefore, taking the upper limit of Equation (\ref{eq1.1}), we get a contradiction and  prove the claim.
The remaining proof is the same as \cite[Lemma 8.1]{meng2017building} after replacing \cite[Lemma 2.3]{meng2017building} by Lemma \ref{productint}.
\end{proof}

%We pose the following question. So far, it has an affirmative answer for the case  when $X$ satisfies \textbf{Hyp $\mathbf{A}$} in Section \ref{section6} or \textbf{Hyp $\mathbf{B}$} in Section \ref{section7}.

%\begin{question}\label{ques8}
%\textup{Suppose $X$ is a  normal $\mathbb{Q}$-factorial compact K\"ahler space and $f:X\rightarrow X$ is a surjective endomorphism. Suppose further that $R_{\Gamma}=\mathbb{R}_{\ge 0}[\Gamma]$ is a $K_X$-negative extremal ray, where $\Gamma$ is a positive closed current of bidimension $(1,1)$. Whether there exists some suitable condition for $X$ so that the contraction morphism of $R_{\Gamma}$ exists?}
%\end{question}

Let $f:X\rightarrow X$ be a surjective endomorphism of a compact K\"ahler manifold $X$. Suppose $\pi:X\dashrightarrow Y$ is a meromorphic map to a complex variety. In \cite[Propositions 2.9 and 2.12]{horing2011non}, H\"{o}ring and Peternell gave us  nice criteria for $f$ to descend as a (meromorphic) endomorphism of $Y$ in terms of Albanese map, Iitaka fibration, rationally connected quotient or algebraic reduction.

\begin{comment}
To begin with this subsection, 
we first remark on the finiteness of a surjective endomorphism. 

\begin{remark}
\textup{If $f:X\rightarrow X$ is a surjective endomorphism of either a normal projective variety or a compact K\"ahler manifold $X$, then it follows from the projection formula or the Gysin morphism that $f$ is finite.}

\textup{Besides, suppose either \textbf{Hyp $\mathbf{A}$} in Section \ref{section6} or \textbf{Hyp $\mathbf{B}$} in Section \ref{section7}. Then, for each surjective endomorphism $f:X\rightarrow X$ of a normal compact K\"ahler threefold $X$ with at worst terminal singularities, the surjectivity of $f$ will imply the finiteness of $f$. Indeed, in this case, $X$ is smooth in codimension two and thus the singular locus of $f$ is discrete. Then, any positive dimensional subvariety cannot be contained in the singular locus and hence, the corresponding current of integration is nonzero by the numerical characterization of K\"ahler classes (cf.~\cite{Demailly2004numerical}). So, the finiteness of $f$ follows from \cite[Theorem 1]{li1995analytic}.}

\textup{However, in a more general setting, especially when $X$ is an arbitrary normal compact K\"ahler space, we could not exclude the case for the cohomology class of some subvariety to be zero. Thus, the finiteness of a surjective endomorphism cannot be determined.} 
\end{remark}
\end{comment}

Now, it is natural to ask whether there exists an equivariant descending for the contraction morphism of a $K_X$-negative extremal  ray. Actually, it has an affirmative answer under the assumption of \textbf{Hyp $\mathbf{A}$} in Section \ref{section6} or \textbf{Hyp $\mathbf{B}$} in Section \ref{section7} (cf.~Lemma \ref{lemequivariant}). The proof is the same as in \cite[Lemma 6.2]{meng2018building} for the projective setting after replacing \cite[Lemma 2.13]{meng2018building} by Lemma \ref{keylem1} and replacing \cite[Corollary 3.17]{kollar2008birational} by \cite[Proposition 8.1]{horing2016minimal}.

\begin{lem}\label{lemequivariant}
Suppose either \textbf{Hyp $\mathbf{A}$} in Section \ref{section6} or \textbf{Hyp $\mathbf{B}$} in Section \ref{section7}. 
Let $f$ be a surjective endomorphism of $X$ and $\pi:X\rightarrow Y$ a contraction  of a $K_X$-negative extremal ray $R_\Gamma:=\mathbb{R}_{\ge 0}[\Gamma]$ generated by a positive closed $(2,2)$-current $\Gamma$. 
Suppose further that $E\subseteq X$ is an analytic subvariety such that $\dim(\pi(E))<\dim E$ and $f^{-1}(E)=E$. 
Then, up to replacing $f$ by its power, $f(R_{\Gamma})=R_{\Gamma}$ (hence, for any curve $C$ such that $[C]\in R_{\Gamma}$, its image $[f(C)]$ still lies in $R_{\Gamma}$). Therefore, the contraction $\pi$ is $f$-equivariant, i.e., $f$ descends to a surjective endomorphism $g$ on $Y$.
\end{lem}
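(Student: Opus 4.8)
\textbf{Proof strategy for Lemma \ref{lemequivariant}.}
The plan is to follow the scheme of \cite[Lemma 6.2]{meng2018building}, replacing the projective ingredients by their K\"ahler analogues developed above. The two things to establish are: (a) after replacing $f$ by a positive power, $f(R_\Gamma)=R_\Gamma$; and (b) this forces $\pi$ to be $f$-equivariant, so that $f$ descends to a surjective endomorphism $g$ of $Y$.

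For (a), first note that by Lemma \ref{finiteness of endomorphism} the surjective endomorphism $f$ is finite, and by Proposition \ref{lem6.1isomorphism} it induces an isomorphism on $\textup{H}_{\textup{BC}}^{1,1}(X)$, hence (by the duality $\textup{N}_1(X)=\textup{H}_{\textup{BC}}^{1,1}(X)^\vee$, cf.~\cite[Proposition 3.9]{horing2016minimal}) an isomorphism $f_*$ of $\textup{N}_1(X)$. By Lemma \ref{keylem1}(4), $R_{f_*\Gamma}=\mathbb{R}_{\ge 0}[f_*\Gamma]$ is again an extremal ray of $\overline{\textup{NA}}(X)$; moreover it is again $K_X$-negative, since the exceptional subvariety $E$ with $\dim\pi(E)<\dim E$ and $f^{-1}(E)=E$ detects $R_\Gamma$ as the class of curves contracted by $\pi$, and curves in $E$ map under $f$ to curves in $E$ (using $f(E)=E$, which follows from $f^{-1}(E)=E$ and surjectivity of $f$), so $f_*\Gamma$ still lies in the face of $\overline{\textup{NA}}(X)$ spanned by contracted curves. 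Here is where Lemma \ref{keylem2finite} enters: the family $\{f^i(E)\}_{i\in\mathbb{Z}}$ is finite (the hypothesis $f^{-1}(E)=E$ and finiteness of $f$ give $f^{-i}f^i(E)=E$ for all $i\ge 0$), so the set of rays $\{R_{(f^i)_*\Gamma}\}_i$, each of which is $K_X$-negative extremal and detected by one of the finitely many subvarieties $f^i(E)$, is finite. Therefore some power $f^s$ satisfies $(f^s)_*\Gamma\in R_\Gamma$, i.e.\ $f^s(R_\Gamma)=R_\Gamma$; replacing $f$ by $f^s$ gives $f(R_\Gamma)=R_\Gamma$. Consequently, for any curve $C$ with $[C]\in R_\Gamma$, Lemma \ref{keylem1}(1) gives $[f(C)]\in R_{f_*\Gamma}=R_\Gamma$.

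For (b), I want to produce $g$ with $g\circ\pi=\pi\circ f$. Consider $\pi\circ f\colon X\to Y$. A curve $C\subseteq X$ is contracted by $\pi$ iff $[C]\in R_\Gamma$ (the defining property of the contraction of the extremal ray, cf.~\cite[Definition 3.19]{horing2016minimal}); by step (a), $[f(C)]\in R_\Gamma$ as well, so $\pi(f(C))$ is a point. Thus $\pi\circ f$ contracts every $\pi$-fibre, and since $\pi$ is a contraction morphism with connected fibres and $\pi_*\mathcal{O}_X=\mathcal{O}_Y$, the rigidity lemma (cf.~\cite[Lemma 1.15]{debarre}, applied on suitable affine charts using properness) yields a morphism $g\colon Y\to Y$ with $g\circ\pi=\pi\circ f$. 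Surjectivity of $g$ follows from surjectivity of $\pi$ and of $f$. This completes the descent; the fact that $g$ is then finite (and, in the cases relevant later, int-amplified) is recorded separately and not needed here.

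The main obstacle is the $K_X$-negativity of $R_{f_*\Gamma}$, equivalently the claim that $f$ permutes the finitely many $K_X$-negative extremal rays that arise this way. In the projective setting of \cite{meng2018building} this is handled via intersection with $K_X$ and the cone theorem; in the K\"ahler setting one must instead argue through the contracted subvariety $E$ and the finiteness package Lemma \ref{keylem2finite}, together with the fact (\cite[Proposition 8.1]{horing2016minimal}) that after the contraction $Y$ is again $\mathbb{Q}$-factorial K\"ahler with terminal (resp.\ klt) singularities so that \textbf{Hyp A}/\textbf{Hyp B} is preserved and the cone and contraction theorems of \cite{horing2016minimal,Horing2015mori} remain applicable. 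I expect the verification that $f_*\Gamma$ still generates a $K_X$-negative extremal ray — rather than drifting to the boundary or into a different face — to require the most care, and it is precisely there that $f^{-1}(E)=E$ with $\dim\pi(E)<\dim E$ is used.
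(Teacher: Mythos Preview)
Your part (b) is fine and agrees with the paper's intended argument: once $f(R_\Gamma)=R_\Gamma$, the rigidity lemma gives the descent. The problem is entirely in part (a).

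First, your invocation of Lemma \ref{keylem2finite} is vacuous here. Under the hypothesis $f^{-1}(E)=E$ and surjectivity of $f$ you already have $f(E)=E$, hence $f^i(E)=E$ for every $i$; there is nothing to prove about finiteness of $\{f^i(E)\}$. (Lemma \ref{keylem2finite} is used in the paper one step earlier, in Proposition \ref{prodivisorial}, to arrange $f^{-1}(E)=E$ in the first place.) Second, and more seriously, the inference ``$\{f^i(E)\}$ is finite, so $\{R_{(f^i)_*\Gamma}\}$ is finite'' does not follow. A single subvariety $E$ may contain curves representing many different numerical classes, and you have not shown that the rays $R_{(f^i)_*\Gamma}$ are $K_X$-negative (your sketch of this via $f(E)=E$ only shows that each such ray is generated by the class of some curve lying in $E$, which says nothing about its sign against $K_X$ and nothing about finiteness). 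So the finiteness of the orbit of rays, which is the whole content of (a), is left unproved.

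The paper (following \cite[Lemma 6.2]{meng2018building}) takes a different and more algebraic route. Using the exact sequence of \cite[Proposition 8.1]{horing2016minimal},
\[
0\longrightarrow \textup{L}^1_{\mathbb{C}}(Y)\xrightarrow{\ \pi^*\ }\textup{L}^1_{\mathbb{C}}(X)\xrightarrow{\ [\alpha]\mapsto \alpha\cdot\Gamma\ }\mathbb{C}\longrightarrow 0,
\]
one restricts to $E$ and checks that the hyperplane $H:=(\pi^*\textup{L}^1_{\mathbb{C}}(Y))|_E$ inside $\textup{L}^1_{\mathbb{C}}(X)|_E$ is an irreducible component of the degree-$\dim E$ hypersurface
\[
S:=\{\alpha|_E\in \textup{L}^1_{\mathbb{C}}(X)|_E\ :\ (\alpha|_E)^{\dim E}=0\}.
\]
Because $f^{-1}(E)=E$, the linear automorphism $(f|_E)^*$ of $\textup{L}^1_{\mathbb{C}}(X)|_E$ preserves $S$ (by the projection formula), and since $S$ has only finitely many irreducible components, a power of $(f|_E)^*$ preserves $H$. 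One then combines this with Lemma \ref{keylem1} and the exact sequence above to conclude $f^{-1}(R_\Gamma)=R_\Gamma$, hence $f(R_\Gamma)=R_\Gamma$. The point is that the finiteness one actually uses is the finiteness of the components of a fixed algebraic hypersurface in a finite-dimensional vector space, not any putative finiteness of rays detected by $E$. Your sketch is missing this idea.
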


 Lemma \ref{lemequivariant} is also valid for any K\"ahler surface with klt singularities. Besides, Lemma \ref{lemequivariant} applies when  $\pi$ is a Mori fibre space (with $E=X$): in that case,  $\pi$ is $f^s$-equivariant for some $s>0$. 

\begin{pro}\label{prodivisorial}
Let $f:X\rightarrow X$ be an int-amplified endomorphism of a normal $\mathbb{Q}$-factorial compact K\"ahler threefold $X$ with at worst terminal singularities. If $\pi:X\rightarrow Y$ is a divisorial contraction of a $K_X$-negative extremal ray $R_\Gamma:=\mathbb{R}_{\ge 0}[\Gamma]$ generated by a positive closed $(2,2)$-current $\Gamma$. Then $f^s$ induces an int-amplified endomorphism of $Y$ for some $s>0$.
\end{pro}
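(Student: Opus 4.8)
The plan is to make the divisorial contraction $\pi$ equivariant for a power of $f$ by producing an $f$-invariant analytic subvariety to feed into Lemma~\ref{lemequivariant}, and then to upgrade the descended endomorphism to an int-amplified one via Lemma~\ref{lem3.4}. Let $E:=\textup{Exc}(\pi)$ be the exceptional locus of the divisorial contraction. Since $\pi$ contracts the divisorial ray $R_\Gamma$, the locus $E$ is an irreducible surface (a prime divisor), $\dim\pi(E)\le 1<\dim E$, and $E$ coincides with the union of all irreducible curves $C$ with current of integration $[C]\in R_\Gamma$ (every such $C$ is contracted by $\pi$, and $E$ is swept out by $\pi$-contracted curves). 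By Lemma~\ref{lemequivariant} it therefore suffices to arrange, after replacing $f$ by a positive power, that $f^{-1}(E)=E$.

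The main point — and the step I expect to be the crux — is to verify the hypothesis $f^{-i}f^{i}(E)=E$ of Lemma~\ref{keylem2finite} for all $i\ge 0$. Since $X$ is $\mathbb{Q}$-factorial, $f$ is finite (Lemma~\ref{finiteness of endomorphism}), so $D:=f^{i}(E)$ is again a prime divisor and, by Remark~\ref{pushforward subvariety} together with the injectivity of $(f^{i})_{*}$ on $\textup{N}_1(X)$ (dual to Proposition~\ref{lem6.1isomorphism}), $D$ is swept out by curves whose classes lie in the single ray $R_{(f^{i})_{*}\Gamma}$. Now let $E''$ be any prime divisor with $f^{i}(E'')=D$; then $f^{i}\colon E''\to D$ is finite surjective, so $E''$ is covered by curves $C''$ with $f^{i}(C'')$ an irreducible curve in $D$ of class in $R_{(f^{i})_{*}\Gamma}$. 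Applying Lemma~\ref{keylem1}(2) to $f^{i}$ shows $R_{[C'']}$ is extremal, and the injectivity of $(f^{i})_{*}$ forces $[C'']\in R_{\Gamma}$; hence each $C''$ is $\pi$-contracted, so $C''\subseteq E$. Thus $E''\subseteq E$, and as both are prime surfaces $E''=E$, which gives $f^{-i}f^{i}(E)=E$. Lemma~\ref{keylem2finite} then shows $\{f^{i}(E):i\ge 0\}$ is finite, so $f^{r_1}(E)=f^{r_2}(E)$ for some $0\le r_1<r_2$; applying the identity $f^{-r_1}(f^{r_1}(Z))=Z$ to $Z=f^{r_2-r_1}(E)$ (valid by the same argument, since $f^{j}(E)$ is again a prime divisor swept out by curves of a single ray) yields $f^{r_2-r_1}(E)=E$. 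Replacing $f$ by $f^{r_2-r_1}$, we obtain $f(E)=E$ and hence $f^{-1}(E)=E$.

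Finally, with $f^{-1}(E)=E$ and $\dim\pi(E)<\dim E$, Lemma~\ref{lemequivariant} applies (after a further replacement of $f$ by a positive power) and shows that $\pi$ is $f$-equivariant: $f$ descends to a surjective endomorphism $g\colon Y\to Y$ with $g\circ\pi=\pi\circ f$. Since a divisorial contraction of a $K_X$-negative extremal ray keeps us inside the class of normal $\mathbb{Q}$-factorial compact K\"ahler threefolds with at worst terminal (hence rational) singularities, $Y$ has rational singularities; moreover $f$, being a power of the original int-amplified endomorphism, is int-amplified by Lemma~\ref{iterationlemma}, so Lemma~\ref{lem3.4} gives that $g$ is int-amplified. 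Collecting the powers of $f$ used along the way, some $f^{s}$ descends to an int-amplified endomorphism of $Y$, which is the assertion.
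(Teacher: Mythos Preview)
Your proof is correct and follows essentially the same route as the paper: identify the exceptional divisor $E$ with the union $S$ of curves in $R_\Gamma$, verify $f^{-i}f^{i}(E)=E$ via Lemma~\ref{keylem1}, invoke Lemma~\ref{keylem2finite} to make $f^{-1}(E)=E$ after a power, then apply Lemma~\ref{lemequivariant} and Lemma~\ref{lem3.4}. The only difference is cosmetic: where the paper cites \cite[Lemma~7.5]{horing2016minimal} for the irreducibility of $S$ and dispatches the identity $f^{-i}f^{i}(S)=S$ in one line by Lemma~\ref{keylem1}(4), you unpack that step by hand (curves in a component $E''$ of $f^{-i}(D)$ map to curves of $R_{(f^i)_*\Gamma}$, hence lie in $R_\Gamma$ by injectivity of $(f^i)_*$).
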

\begin{proof}
Let $S:=\bigcup_{C\subseteq X,[C]\in R_{\Gamma}}C$.
By \cite[Lemma 7.5]{horing2016minimal}, $S$ is an irreducible Moishezon surface and $S\cdot C<0$ for all curves $C$ with $[C]\in R_\Gamma$. By Lemma \ref{keylem1}, $f^{-i}f^i(S)=S$ for all $i\ge 0$, since $f^i$ is surjective. Then, it follows from Lemma \ref{keylem2finite} that $M_{\ge 0}(S)$ is a finite set. So, we may assume $f^{-1}(S)=S$ with $f$ replaced by its power. Also,  $\pi$ is $f^s$-equivariant for some $s>0$ (cf.~Lemma \ref{lemequivariant}). By Lemma \ref {lem3.4}, the induced endomorphism  on $Y$ is int-amplified.
\end{proof}

Indeed, without Lemma \ref{keylem2finite}, 
the finiteness of the number of such $S$ in Proposition \ref{prodivisorial} also comes from the fact that all of these Moishezon surfaces consist of the integral part of the Zariski decomposition for $K_X$ (cf.~
\cite[Section 4.B. and the proof of Lemma 7.5]{horing2016minimal}).

The following result is the analytic version of \cite[Lemma 3.6]{zhang2010polarized} (cf.~\cite[(0.4.1)]{mori88flip} for the existence of flips of threefolds), the proof of which is the same since each contraction morphism of compact K\"ahler threefolds is projective ($-K_X$ is $\pi$-ample).

\begin{lem}\label{flipdescendingholomorphic}
Let $f:X\rightarrow X$ be a surjective endomorphism of a normal $\mathbb{Q}$-factorial compact K\"ahler threefold $X$ with at worst terminal singularities and $\sigma:X\dashrightarrow X'$ a flip with $\pi:X\rightarrow Y$ the corresponding flipping contraction of a $K_X$-negative extremal ray $R_\Gamma:=\mathbb{R}_{\ge 0}[\Gamma]$ generated by some  positive closed $(2,2)$-current $\Gamma$. Suppose that $R_{f_*\Gamma}=R_\Gamma$. Then the dominant meromorphic map $f^+:X^+\dashrightarrow X^+$ induced from $f$, is holomorphic. Both $f$ and $f^+$ descend to the same endomorphism of $Y$. 
\end{lem}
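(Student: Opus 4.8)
\textbf{Proof proposal for Lemma \ref{flipdescendingholomorphic}.}

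The plan is to mimic the projective argument of \cite[Lemma 3.6]{zhang2010polarized}, exploiting the key fact that the flipping contraction $\pi:X\rightarrow Y$ and its flip $\pi^+:X^+\rightarrow Y$ are \emph{projective} morphisms, since $-K_X$ is $\pi$-ample and $K_{X^+}$ is $\pi^+$-ample. First I would use the hypothesis $R_{f_*\Gamma}=R_\Gamma$ together with Lemma \ref{keylem1}(4): the set $\Sigma_\Gamma$ of curves with class in $R_\Gamma$ is mapped into itself under $f$ (more precisely $f(\Sigma_\Gamma)=\Sigma_{f_*\Gamma}=\Sigma_\Gamma$ and $\Sigma_\Gamma = f^{-1}(\Sigma_\Gamma)$). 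Hence $f$ maps the (finite, by \cite[Remark 7.2]{horing2016minimal}) exceptional locus $\mathrm{Exc}(\pi)$ into itself, and since $\pi$ contracts exactly the curves whose class lies in $R_\Gamma$, the morphism $g:=\pi\circ f:X\rightarrow Y$ is constant on the fibres of $\pi$. By the universal property of the contraction (equivalently, by the rigidity lemma applied to the projective morphism $\pi$, using $\pi_*\mathcal O_X=\mathcal O_Y$), $g$ factors through $\pi$, giving a surjective endomorphism $h:Y\rightarrow Y$ with $\pi\circ f = h\circ\pi$. This is the descent of $f$ to $Y$.

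Next I would produce $f^+$ and show it is holomorphic. The flip $X^+$ is constructed as $\mathrm{Proj}_Y\bigl(\bigoplus_{m\ge 0}(\pi_*\mathcal O_X(mK_X))\bigr)$ (relative over $Y$, which makes sense because $\pi$ is projective), and $K_{X^+}$ is $\pi^+$-ample. The endomorphism $h$ on $Y$ together with $f$ on $X$ induce, via pullback of the relative canonical algebras and the relation $f^*K_X \sim K_X$ up to the (effective, $f^{-1}$-invariant on the relevant locus) ramification divisor, a graded $\mathcal O_Y$-algebra map that is compatible with $h^*$; taking $\mathrm{Proj}$ over $Y$ yields a morphism $f^+:X^+\rightarrow X^+$ lying over $h$, i.e. $\pi^+\circ f^+ = h\circ \pi^+$. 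Since $X$ and $X^+$ are isomorphic in codimension $1$ (the flip is an isomorphism away from the flipping/flipped loci, which have codimension $\ge 2$ in a $\mathbb Q$-factorial terminal threefold), this $f^+$ agrees with the birational map induced from $f$ by transporting along $\sigma:X\dashrightarrow X^+$, so it is exactly the meromorphic map $f^+$ of the statement — now shown to extend to a morphism. Finally, $\pi^+\circ f^+ = h\circ\pi^+$ and $\pi\circ f = h\circ\pi$ give that both $f$ and $f^+$ descend to the \emph{same} endomorphism $h$ of $Y$.

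The main obstacle I expect is the justification that $f^+$ is genuinely holomorphic rather than merely meromorphic, i.e. that the induced map on relative $\mathrm{Proj}$'s is a morphism in the compact K\"ahler (non-algebraic) category. The saving point is precisely the emphasis in the statement that \emph{each contraction morphism of compact K\"ahler threefolds is projective}: both $\pi$ and $\pi^+$ are projective over $Y$, so over any affine (Stein) open $U\subseteq Y$ the varieties $\pi^{-1}(U)$ and $(\pi^+)^{-1}(U)$ are projective over $U$, the relative canonical algebras are finitely generated $\mathcal O_U$-algebras, and the construction of $f^+$ over $U$ is purely the algebraic one from \cite{zhang2010polarized}; these local pieces glue because they are all compatible with $h$. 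One must also check the bookkeeping with the ramification divisor $R_f$ (that it does not obstruct the algebra map) and that $f^{-1}$ preserves the flipping locus — both follow from Lemma \ref{keylem1} and the fact that $f$ is finite (Lemma \ref{finiteness of endomorphism}), so I would treat them as routine. A secondary, minor point is confirming $\pi_*\mathcal O_X = \mathcal O_Y$ for the rigidity step, which holds because $\pi$ has connected fibres and $Y$ is normal.
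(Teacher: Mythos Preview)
Your approach is exactly the one the paper takes: it cites \cite[Lemma 3.6]{zhang2010polarized} and notes that the projective argument carries over verbatim because both $\pi$ and $\pi^+$ are projective morphisms ($-K_X$ is $\pi$-ample). The details you supply (descent via rigidity, the relative $\mathrm{Proj}$ construction of $f^+$, and local gluing over $Y$) are precisely what is implicit in that reference, so your proposal is correct and matches the paper.
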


\begin{pro}\label{proflipdescending}
Let $f:X\rightarrow X$ be an int-amplified endomorphism of a normal $\mathbb{Q}$-factorial compact K\"ahler threefold $X$ with at worst terminal singularities and $\sigma:X\dashrightarrow X'$ a flip with $\pi:X\rightarrow Y$ the corresponding flipping contraction of a $K_X$-negative extremal ray $R_\Gamma:=\mathbb{R}_{\ge 0}[\Gamma]$ generated by some  positive closed current $\Gamma$ of bidimension $(1,1)$. Then there exists a $K_X$-flip of $\pi$, $\pi^+:X^+\rightarrow Y$ such that $\pi=\pi^+\circ\sigma$ and for some $s>0$, the commutativity is $f^s$-equivariant.
\end{pro}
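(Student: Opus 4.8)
The plan is to mimic the proof of Proposition~\ref{prodivisorial}, replacing the divisorial contraction by a flipping contraction and invoking Lemma~\ref{flipdescendingholomorphic} at the end. First I would let $S:=\bigcup_{C\subseteq X,\,[C]\in R_\Gamma}C$ be the (finite) union of the flipping curves; by \cite[Remark 7.2]{horing2016minimal} the small contraction $\pi$ contracts only finitely many curves, so $S$ is a curve (or a finite union of curves) inside the exceptional locus of $\pi$. Since $f$ is a surjective endomorphism, $f^i$ is surjective for all $i\ge 0$, and Lemma~\ref{keylem1}(4) gives $f^{-i}f^i(S)=S$ for all $i\ge 0$: indeed $\Sigma_\Gamma=f^{-1}(\Sigma_{f_*\Gamma})$ iterates to $\Sigma_\Gamma=f^{-i}(\Sigma_{(f^i)_*\Gamma})$, and since $\deg f^i$ fixes the set of all curves lying over curves, surjectivity forces $f^i(S)$ to be the union of curves in $R_{(f^i)_*\Gamma}$ and $f^{-i}$ of that back to $S$.

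Next I would apply Lemma~\ref{keylem2finite} to the closed subvariety $A=S$: this yields that $M(S)=\{f^i(S)\mid i\in\mathbb Z\}$ is a finite set, so after replacing $f$ by a positive power we may assume $f^{-1}(S)=S$ (equivalently $f(S)=S$). Now I want to apply Lemma~\ref{lemequivariant}, but that lemma is stated for an analytic subvariety $E$ with $\dim(\pi(E))<\dim E$ and $f^{-1}(E)=E$. For a small contraction the exceptional locus $E$ is one-dimensional and $\pi(E)$ is a point, so $\dim(\pi(E))=0<1=\dim E$; taking $E=S$ (or its reduction) we are exactly in the hypotheses of Lemma~\ref{lemequivariant}. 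Hence, replacing $f$ by a further power, $f^s(R_\Gamma)=R_\Gamma$, i.e. $R_{(f^s)_*\Gamma}=R_\Gamma$, and $\pi$ is $f^s$-equivariant: $f^s$ descends to a surjective endomorphism $g$ of $Y$.

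Once $R_{(f^s)_*\Gamma}=R_\Gamma$ is established, Lemma~\ref{flipdescendingholomorphic} (applied to $f^s$ in place of $f$) gives immediately that the induced dominant meromorphic map $f^+:X^+\dashrightarrow X^+$ on the flip is in fact holomorphic, that the flip $\pi^+:X^+\to Y$ satisfies $\pi=\pi^+\circ\sigma$, and that both $f^s$ and $(f^s)^+$ descend to the same endomorphism $g$ of $Y$; in particular the whole square is $f^s$-equivariant. The existence of the flip $\pi^+$ itself is quoted from \cite[(0.4.1)]{mori88flip}, so nothing new is needed there. The main obstacle is the bookkeeping of powers: Lemma~\ref{keylem2finite} needs the hypothesis $f^{-i}f^i(S)=S$ for \emph{all} $i\ge 0$ (not just after passing to a power), which is why I first use Lemma~\ref{keylem1} to get that identity for the original $f$ before invoking finiteness, and only then pass to $f^s$; and one must check that the power produced by Lemma~\ref{lemequivariant} and the power needed for Lemma~\ref{flipdescendingholomorphic} can be absorbed into a single $s$, which is routine since a power of an int-amplified (hence surjective) endomorphism is again int-amplified by Lemma~\ref{iterationlemma}.
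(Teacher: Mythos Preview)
Your proposal is correct and follows essentially the same route as the paper: the paper simply cites \cite[Lemma 6.5]{meng2018building} and says to replace the projective ingredients there by Lemmas~\ref{keylem1}, \ref{keylem2finite}, \ref{lemequivariant}, and \ref{flipdescendingholomorphic}, which is exactly the sequence you carry out explicitly (define $S=\operatorname{Exc}(\pi)$, use Lemma~\ref{keylem1}(3)(4) to get $f^{-i}f^i(S)=S$, apply Lemma~\ref{keylem2finite} to make $S$ periodic, then Lemma~\ref{lemequivariant} to fix the ray, then Lemma~\ref{flipdescendingholomorphic} for the flip). The only addendum in the paper is the remark that $f^+$ is again int-amplified via Lemmas~\ref{lem3.4} and \ref{lem3.333}, which is not part of the stated proposition but is needed later when iterating the MMP.
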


\begin{proof}
The proof is the same as \cite[Lemma 6.5]{meng2018building} after replacing \cite[Lemmas 2.13, 6.1, 6.2 and  6.6]{meng2018building} by Lemmas \ref{keylem1}, \ref{keylem2finite}, \ref{lemequivariant} and \ref{flipdescendingholomorphic}. Also, Lemmas \ref{lem3.4} and \ref{lem3.333} show that the induced endomorphism $f^+$ in Lemma \ref{flipdescendingholomorphic} is int-amplified.
\end{proof}

The next Theorem follows from Lemmas \ref {lemequivariant}, \ref{flipdescendingholomorphic} and Propositions \ref{prodivisorial}, \ref{proflipdescending}.

\begin{thm}\label{thm8.13}
Let $f:X\rightarrow X$ be an int-amplified endomorphism of a $\mathbb{Q}$-factorial compact K\"ahler space $X$ with at worst terminal singularities. Let $\pi: X\dashrightarrow Y$ be a dominant rational map which is either a divisorial contraction or a Fano contraction or a flip induced by a $K_X$-negative extremal ray $R_{\Gamma}$. Then there exists an int-amplified endomorphism $g:Y\rightarrow Y$ such that $g\circ\pi=\pi\circ f$ with $f$ replaced by its power.
\end{thm}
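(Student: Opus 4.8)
The plan is to dispose of the three cases of $\pi$ separately, since each has already been treated by the preceding propositions and lemmas; the theorem is essentially just a clean repackaging of those results into a single uniform statement. First I would observe that in all three cases the target $Y$ is again a normal $\mathbb{Q}$-factorial compact K\"ahler threefold (or surface, in the Fano case) with at worst terminal (resp.\ klt) singularities, and in particular still has at worst rational singularities, so Lemma \ref{lem3.4} and Lemma \ref{lem3.333} apply to any endomorphism that descends; this is the common thread that will upgrade a mere ``descends'' statement to ``descends to an int-amplified endomorphism''.

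For the divisorial case, I would simply invoke Proposition \ref{prodivisorial}: replacing $f$ by a suitable power $f^s$, the contraction $\pi$ is $f^s$-equivariant and the induced $g$ on $Y$ is int-amplified (here one uses that $g\circ\pi=\pi\circ f^s$ together with Lemma \ref{lem3.4}, since $f^s$ is int-amplified by Lemma \ref{iterationlemma}). For the flip case, Proposition \ref{proflipdescending} gives, after replacing $f$ by a power, the $f^s$-equivariant diagram $X\dashrightarrow X^+$ over $Y$; combining Lemma \ref{flipdescendingholomorphic} with Lemma \ref{lem3.4} shows the descended $g:Y\to Y$ is int-amplified, and one checks $g\circ\pi=\pi\circ f^s$. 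For the Fano (Mori fibre) contraction $\pi:X\to Y$, I would use Remark \ref{keyremrk}: since $E=X$ here, $\pi$ is $f^s$-equivariant for some $s>0$, so $f^s$ descends to a surjective endomorphism $g:Y\to Y$ with $g\circ\pi=\pi\circ f^s$; then Lemma \ref{lem3.4} (applied to the surjective morphism $\pi$ between normal compact K\"ahler spaces with rational singularities, noting $Y$ is a normal $\mathbb{Q}$-factorial compact K\"ahler surface with klt singularities by the discussion following \cite{Horing2015mori}) shows $g$ is int-amplified because $f^s$ is.

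Finally I would note that ``replacing $f$ by its power'' is harmless: if $f^s$ works, the conclusion is stated exactly for that power, and Lemma \ref{iterationlemma} guarantees $f^s$ is still int-amplified. The main (and only real) obstacle is bookkeeping: making sure the hypotheses of Proposition \ref{prodivisorial}, Proposition \ref{proflipdescending}, and Remark \ref{keyremrk}/Lemma \ref{lemequivariant} are literally met — in particular that in each case the relevant extremal ray is generated by a positive closed $(n-1,n-1)$-current and that the target keeps terminal-or-klt, $\mathbb{Q}$-factorial, compact K\"ahler status so that the Bott--Chern machinery and Lemma \ref{lem3.4} remain available. Once that is verified, the proof is a two-line case distinction, and I would write it as such, citing the three propositions and closing by applying Lemma \ref{lem3.4} uniformly to conclude int-amplification of $g$.
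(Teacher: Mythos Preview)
Your proposal is correct and matches the paper's own proof, which is literally the one-line citation ``This is a consequence of Remark \ref{keyremrk}, Proposition \ref{prodivisorial}, \ref{proflipdescending} and Lemma \ref{flipdescendingholomorphic}.'' The only minor imprecision is in the flip case: to conclude $f^+$ on $X^+$ is int-amplified you need Lemma \ref{lem3.333} (applied to the birational $X^+\to Z$) in addition to Lemma \ref{lem3.4} (applied to $X\to Z$), exactly as recorded at the end of Proposition \ref{proflipdescending}; you flagged Lemma \ref{lem3.333} in your opening paragraph, so this is just a matter of citing it at the right spot.
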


\begin{proof}[Proof of Theorem \ref{thmmmpthreefold}]
 First, if $K_X$ is pseudo-effective, then $X=Y$ is a $Q$-torus by  Proposition \ref{torus1}. Next, we consider the case when $K_X$ is not pseudo-effective. If the base of the MRC fibration $X\dashrightarrow Z$ has dimension $\le 1$, then Theorem \ref{thmmmpthreefold} follows from Lemma \ref{baselessthan1} and \cite[Theorem 1.10]{meng2017building}, since $X$ is projective in this case.

Now, we assume \textbf{Hyp $\mathbf{B}$} in Section \ref{section7}. By \cite[Theorem 1.1]{Horing2015mori}, we may run the MMP for finitely many steps: $X=X_1\dashrightarrow \cdots\dashrightarrow X_j$ consisting of  contractions of extremal rays and flips, such that $X_j$ admits a Mori fibre space $X_j\rightarrow S=X_{j+1}$ onto a normal  $\mathbb{Q}$-factorial compact K\"ahler surface. Note that $S$ has at worst klt singularities (cf.~\cite[Theorem 1.1]{Horing2015mori}).
Note also that $S$ is bimeromorphic to the base of the MRC fibration of $X$; hence $S$ is non-uniruled and $K_S$ is pseudo-effective by considering a resolution of $S$.
By Proposition \ref{torus5}, $S$ is a $Q$-torus, which is our end product. 
 Replacing $f$ by its power, the above sequence is $f$-equivariant by Theorem \ref{thm8.13}.

By Proposition \ref{proholomorphic}, the composition $u_i: X_i\dashrightarrow Y$ is a morphism for each $i$. Also, for each small contraction $\pi_i: X_i\rightarrow Z_i$, we claim that \textbf{the meromorphic map $Z_i\dashrightarrow Y$ is also a morphism}. In the following,  let $\text{Exc}(\pi_i)$ denote the exceptional locus of $\pi_i$.

If the claim does not hold, then by the rigidity lemma, there exists a curve $C$ on $X_i$ with $[C]\in R$ such that $u_i(C)$ is a curve on $Y$. Here, $R$ is a $K_{X_i}$-negative small ray and $\pi_i$ is the small contraction of $R$. 
Recall that, under \textbf{Hyp $\mathbf{B}$}, $Y$ is a $Q$-torus of dimension $2$. 
By Lemmas \ref{keylem1} and \ref{keylem2finite}, with $f$ replaced by its power, $f^{-1}(\textup{Exc}(\pi_i))=\textup{Exc}(\pi_i)$. Since $\dim\text{Exc}(\pi_i)=1$,  with $f$ further replaced by its power, $u_i(C)$  is totally invariant under the induced int-amplified endomorphism $Y\rightarrow Y$ (cf.~\cite[Lemma 7.5]{cascini2019polarized}), a contradiction to Lemma \ref{lemma5.6}. Thus, $Z_i\rightarrow Y$ is also a morphism and the MMP is over $Y$.

By \cite[Lemmas 2.6 and 5.2]{meng2018building}, $X_i\rightarrow Y$ is equi-dimensional with every fibre being irreducible and rationally connected. Also, $K_{X_i}$ is not pseudo-effective for any $i<r$, otherwise $X_i$ is a $Q$-torus for some $i<r$ and the MMP ends. Therefore, we complete the proof of Theorem \ref{thmmmpthreefold}.
\end{proof}


\begin{thebibliography}{99}




\bibitem{ancona2006differential}
V. Ancona and B. Gaveau, {\em Differential forms on singular varieties}, Pure. Appl. Math. (Boca Raton), Vol. \textbf{273}, 2006.



\bibitem{Beauville1983some}
A. Beauville, Some remarks on K\"ahler manifolds with $c_1=0$, {\em Classification of Algebraic and Analytic Manifolds} (Katada, 1982, ed. K. Ueno), Progr. Math., \textbf{39} Birkh\"auser 1983, 1--26.




%\bibitem{boucksom2004divisorial}
%S. Boucksom, Divisorial {Z}ariski decompositions on compact complex manifolds, {\em Ann. Sci. \'{E}cole Norm. Sup. (4)}, Vol. \textbf{37}(2004), no. 1, 45--76.






\bibitem{boucksom2013an}
S. Boucksom, P. Eyssidieux and V. Guedj, {\em An introduction to the {K}\"{a}hler-{R}icci flow}, Springer, 2013.


\bibitem{brunella2006positivity}
M. Brunella, A positivity property for foliations on compact K\"ahler manifolds, {\em Internat. J. Math.}, Vol. \textbf{17} (2006), no. 1, 35--43.

\bibitem{campana1981cor}
F. Campana, Cor\'{e}duction alg\'{e}brique d'un espace analytique faiblement k\"{a}hl\'{e}rien compact, {\em Invent. Math.}, Vol. \textbf{63} (1981), no. 2, 187--223.


\bibitem{cascini2019polarized}
P. Cascini, S. Meng and D.-Q. Zhang, Polarized endomorphisms of normal projective threefolds in arbitrary characteristic, \textit{Math. Ann.}, Vol. \textbf{378} (2020), no. 1-2, 637--665.



\bibitem{debarre}
O. Debarre, Higher-dimensional algebraic geometry, Universitext, Springer-Verlag, New Tork, 2001.



\bibitem{Dem85}
J.-P. Demailly, Mesures de Monge-Amp\`ere et caract\'erisation g\'eom\'etrique des vari\'et\'es alg\'ebriques affines, \textit{M\'em. Soc. Math. France (N.S.)}, no. 19 (1985), 124 pp.



\bibitem{Demailly1992regularization}
J.-P. Demailly, Regularization of closed positive currents and intersection theory, {\em J. Algebraic Geom.}, Vol. \textbf{1} (1992), no. 3, 361--409.



\bibitem{Demailly2004numerical}
J.-P. Demailly and M. Paun, Numerical characterization of the K\"{a}hler cone of a compact K\"{a}hler manifold, {\em Ann. of Math. (2)}, Vol. \textbf{159} (2004), no. 3, 1247--1274.
 
 
\bibitem{Demailly2012complex}
J.-P. Demailly, Complex analytic and differential geometry, available online at \url{http://www-fourier.ujf-grenoble.fr/~demailly}, 2012.


\bibitem{dinh2004groupes}
T.-C. Dinh and N. Sibony, Groupes commutatifs d'automorphismes d'une vari\'{e}t\'{e} k\"{a}hl\'{e}rienne, {\em Duke Math. J.}, Vol. \textbf{123} (2004), no. 2, 311--328.





\bibitem{fakhruddin2002questions}
N. Fakhruddin,
  Questions on self maps of algebraic varieties,
{\em J. Ramanujan Math. Soc.}, Vol. \textbf{18}  (2003), no. 2, 109--122.



\bibitem{fujino2019minimal}
O. Fujino, Minimal model theory for log surfaces in Fujiki's class $\mathcal{C}$, {\em Nagoya Math. J.}, Vol. \textbf{244} (2021), 256--282.



\bibitem{graf2018algebraic}
P. Graf, Algebraic approximation of {K}\"{a}hler threefolds of {K}odaira dimension zero, {\em Math. Ann.}, Vol. \textbf{371} (2018), no. 1-2, 487--516.


\bibitem{graf2019finite} 
P. Graf and T. Kirschner, Finite quotients of three-dimensional complex tori, {\em Ann. Inst. Fourier (Grenoble)}, Vol. \textbf{70} (2020), no. 2, 881--914.


\bibitem{grauert1955zur}
H. Grauert and R. Remmert, Zur {T}heorie der {M}odifikationen. {I}. {S}tetige und eigentliche {M}odifikationen komplexer {R}\"{a}ume, {\em Math. Ann.}, Vol. \textbf{129} (1955), 274--296.



\bibitem{greb2016etale}
D. Greb, S. Kebekus and T. Peternell, \'{E}tale fundamental groups of {K}awamata log terminal spaces, flat sheaves, and quotients of abelian varieties, {\em Duke Math. J.}, Vol. \textbf{165} (2016), no. 10, 1965--2004.



\bibitem{horing2011non}
A. H\"{o}ring and T. Peternell, Non-algebraic compact {K}\"{a}hler threefolds admitting endomorphisms, {\em Sci. China Math.}, Vol. \textbf{54} (2011), no. 8, 1635--1664.




\bibitem{Horing2015mori}
A. H\"{o}ring and T. Peternell, Mori fibre spaces for K\"{a}hler threefolds, {\em J. Math. Sci. Univ. Tokyo}, Vol. \textbf{22} (2015), no. 1, 219--246.


\bibitem{horing2016minimal}
A. H\"{o}ring and T. Peternell, Minimal models for K\"{a}hler threefolds, {\em Invent. Math.}, Vol. \textbf{203} (2016), no. 1, 217--264.

\bibitem{horing2018bimero}
A. H\"{o}ring and T. Peternell, Bimeromorphic geometry of {K}\"{a}hler threefolds, {\em Algebraic geometry: {S}alt {L}ake {C}ity 2015}, Proc. Sympos. Pure Math., Vol. \textbf{97} (2018),  381--402.


\bibitem{hu2014ample}
F. Hu, S. Meng and D.-Q. Zhang, Ampleness of canonical divisors of hyperbolic normal 
projective varieties, {\em Math. Z.}, Vol. \textbf{278} (2014), no. 3-4, 1179--1193.

  
\bibitem{kawamata1985minimal}
Y. Kawamata, Minimal models and the Kodaira dimension of algebraic fiber spaces, {\em J. Reine Angew. Math.}, Vol. \textbf{363} (1985), 1--46.
 

\bibitem{kollar2008birational}
J. Koll{\'a}r and S. Mori,
  {\em Birational geometry of algebraic varieties}, Cambridge Tracts in Math.,  Vol. \textbf{134},  Cambridge: Cambridge Univ. Press, 1998.




\bibitem{meng2017building}
S. Meng,
  Building blocks of amplified endomorphisms of normal projective
  varieties, {\em Math. Z.}, Vol. \textbf{294}  (2020), no. 3-4, 1727--1747.
  

\bibitem{meng2018building}
S. Meng and D.-Q. Zhang,
  Building blocks of polarized endomorphisms of normal projective
  varieties,
  {\em Adv. Math.}, Vol. \textbf{325} (2018), no. 243--273.

\bibitem{mori88flip}
S. Mori, Flip theorem and the existence of minimal models for {$3$}-folds, {\em J. Amer. Math. Soc.}, Vol. \textbf{1} (1988), no. 1, 117--253.





\bibitem{nakayama2008on}
N. Nakayama, On complex normal projective surfaces admitting non-isomorphic surjective endomorphisms, Preprint 2 September 2008.


\bibitem{nakayama2020-1920}
N. Nakayama, Singularity of Normal Complex Analytic Surfaces Admitting Non-Isomorphic Finite Surjective Endomorphisms, Preprint RIMS-1920, Kyoto Univ., July 2020.


\bibitem{nakayama2020}
N. Nakayama, On normal Moishezon surfaces admitting non-isomorphic surjective endomorphisms, Preprint RIMS-1923, Kyoto Univ., September 2020.


\bibitem{nakayama2009building}
N. Nakayama and D.-Q. Zhang, Building blocks of \'etale endomorphisms of complex projective manifolds, {\em Proc. Lond. Math. Soc. (3)}, Vol. \textbf{99} (2009), no. 3, 725--756.




\bibitem{nakayama2010polarized}
N. Nakayama and D.-Q. Zhang,
  Polarized endomorphisms of complex normal varieties,
  {\em Math. Ann.}, Vol. \textbf{346} (2010), no. 4, 991--1018, \href{https://arxiv.org/abs/0908.1688v1}{ arXiv:0909.1688v1}.
  

\bibitem{namikawa2002projectivity}
Y. Namikawa, Projectivity criterion of {M}oishezon spaces and density of projective symplectic varieties, {\em Internat. J. Math.}, Vol. \textbf{13} (2002), no. 2, 125--135.





\bibitem{ueno2006classification}
K. Ueno, {\em Classification theory of algebraic varieties and compact complex spaces}, 
Lecture Notes in Mathematics, Vol. \textbf{439}, Springer-Verlag, Berlin-New York, 1975.





\bibitem{varouchas1989kahler}
J. Varouchas, K\"{a}hler spaces and proper open morphisms, {\em Math. Ann.}, Vol. \textbf{283} (1989), no. 1, 13--52.



\bibitem{voisin2007hodge}
C. Voisin, Hodge theory and complex algebraic geometry. \Rmnum{1}, {\em Cambridge Studies in Advanced Mathematics}, Vol. \textbf{76}, Cambridge University Press, Cambridge, 2007.




\bibitem{yau1978ricci}
S.-T. Yau, On the ricci curvature of a compact k\"ahler manifold and the complex monge-amp\'ere equation, I, {\em Comm. Pure Appl. Math.}, Vol. \textbf{31} (1978), no. 3, 339--411.

\bibitem{yau2008survey}
S.-T. Yau, A survey of Calabi-Yau manifolds, {\em Surv. Differ. Geom.}, Vol. \textbf{13} (2008), no. 1, 277--318. 


\bibitem{zhang2010polarized}
D.-Q. Zhang, Polarized endomorphisms of uniruled varieties, {\em Compos. Math.}, Vol. \textbf{146} (2010), no. 1, 145--168.


\bibitem{zhang2014invariant}
D.-Q. Zhang,
Invariant hypersurfaces of endomorphisms of projective varieties,
 {\em Adv. Math.}, Vol. \textbf{252} (2014), 185--203.

\bibitem{zhong2020invariant}
G. Zhong, Totally invariant divisors of int-amplified endomorphisms of normal projective varieties, \textit{J. Geom. Anal.}, Vol. \textbf{31} (2021), no. 3, 2568--2593.
\end{thebibliography}
\end{document}